\numberwithin{equation}{section}
\newtheorem*{rep@theorem}{\rep@title}
\newcommand{\newreptheorem}[2]{%
\newenvironment{rep#1}[1]{%
 \def\rep@title{#2 \ref{##1}}%
 \begin{rep@theorem}}%
 {\end{rep@theorem}}}
\numberwithin{equation}{section}
\numberwithin{figure}{section}
\newtheorem{theorem}{Theorem}
\newtheorem{definition}[theorem]{Definition}
\newtheorem{proposition}[theorem]{Proposition}
\newtheorem{lemma}[theorem]{Lemma}
\newtheorem{corollary}[theorem]{Corollary}
\title{Simulation of Reflected Brownian motion on two dimensional wedges}
\author{Pierre Bras\footnote{Sorbonne Universit\'e, Laboratoire de Probabilit\'es, Statistique et Mod\'elisation, UMR 8001, case 188, 4 pl. Jussieu, F-75252 Paris Cedex 5, France. E-mail: \texttt{pierre.bras@sorbonne-universite.fr}. Supported in part by \'Ecole Normale Sup\'erieure, D\'epartement de Math\'ematiques et Applications.}$\ $ and Arturo Kohatsu-Higa\footnote{Ritsumeikan University. Department of Mathematical Sciences. E-mail: \texttt{khts00@fc.ritsumei.ac.jp}. Supported in part by KAKENHI 20K03666.}}
\date{}
\begin{document}

\maketitle

\begin{abstract}
	We study a correlated Brownian motion in two dimensions, which is reflected, stopped or killed in a wedge represented as the intersection of two half spaces. First, we provide explicit density formulas, hinted by the method of images. These explicit expressions rely on infinite oscillating sums of Bessel functions and may demand computationally costly procedures. 
	
	 We propose suitable recursive algorithms for the simulation of the laws of reflected and stopped Brownian motion which are based on generalizations of the reflection principle in two dimensions.
	We study and give bounds for the complexity of the proposed algorithms. 
	
	\smallskip
	
	\textbf{Key words:} Reflected Brownian motion, Wedge, Hitting times, Reflection principle, Method of images, Monte Carlo simulation.
	
	\textbf{MSC 2020:} 60H35, 65C05, 65C30
\end{abstract}

\section{Introduction}

\indent The objective of the present article is to provide exact formulas and algorithms for the simulation of a normally reflected two-dimensional Brownian motion starting at $ x_0\in\mathbb{R}^2$.

The reflected process is denoted by $X\equiv (X_t)_{t\in [0,T]}$ and the domain of reflection is a wedge $\mathcal{D}$, i.e. a subset of $\mathbb{R}^2$ delimited by two (non parallel) lines, so as to give Monte Carlo methods for the estimation of $\mathbb{E}^{x_0}[f(X_T)]$, where $f : \mathcal{D} \rightarrow \mathbb{R}$ is a measurable function such that $f(X_T) \in L^1$ and $T>0$ is the finite time horizon.
Our first goal is to obtain an explicit formula for the density of  $ X_t $  (see Theorem \ref{Rformula_alpha_pi_m}). Unfortunately, this formula involves oscillating infinite sums of Bessel functions. 
Instead of directly computing these sums, we propose an alternative simulation method which uses an extension of the reflection principle in two dimensions for a particular type of wedges with angle $\pi/m$, $m \in \mathbb{N}$.
As a first step, we obtain a simulation method for $ \mathbb{E}^{x_0}[ f(W_{T \wedge \tau})]$, where $W=(W_t)_{t\geq 0}$ is a two-dimensional Brownian motion and $ \tau $ stands for the first time the process $ W $ touches the boundary of the wedge $\mathcal{D}$.  
Applying the methodology for the stopped process recursively one obtains  an algorithm for the reflected process $ X $.
The algorithms we present here rely on an idea which has links to the rectangle method \cite{lejay2006}: we include a smaller wedge of angle $\pi/m$ inside the first wedge and simulate the exit from the smaller wedge.

 The algorithm for the simulation of the stopped process improves the method proposed in \cite{metzler2008}. In the reflected case, we prove that although some lower moments are finite, the average number of simulations for this method is infinite. This effect is created due to the large number of reflections that may happen if the process enters the corner of the wedge. We then propose a modified approximation scheme which takes into account the asymptotic behaviour of the density near the corner.
With this modification, we show that the expected number of iterations of the algorithm is finite and measure the error of approximation in total variation distance. We then give adaptations of our algorithms to more general It\={o} processes reflected in a wedge.

The expression of the density of the stopped Brownian motion in a wedge of $\mathbb{R}^2$ has been obtained in \cite{iyengar1985} using the method of images. However, the author does not provide full arguments for the verification of the initial condition and  some exchanges of infinite summation and integrations are not fully explained. For the history of the expression of the density of the stopped Brownian motion on a wedge, see \cite{Ban}.

In \cite{chupeau2015} and \cite{metzler2010} the authors correct some mistakes in formulas appearing in \cite{iyengar1985} and give formulas for a number of other random variables related to the stopped Brownian motion in two dimensions with general correlation coefficients, such as the survival probability and the first-passage time distribution.
However, formulas from \cite{iyengar1985} and \cite{metzler2010} are not directly applicable to simulation algorithms of stopped processes, as they involve infinite sums and Bessel functions. \cite[Section 2]{metzler2008} gives an approximate simulation algorithm using the semi-analytic expression of the density established in \cite{iyengar1985}. But this method implies the approximation of an infinite sum of Bessel functions and it leads to bias when simulating the stopping time $\tau$. 


More recent papers (\cite{escobar2013}, \cite{blanchetscalliet2013}) extend the results of \cite{iyengar1985} and \cite{metzler2010} to three dimensions and give algorithms and density formulas for the stopped Brownian motion in three dimensions, using the method of images in $\mathbb{R}^3$. These algorithms which point towards the possibility of larger dimension extensions are however limited to some specific values of correlation. 
In \cite{kaushansky2017}, the authors use the spectral decomposition method which applies in a larger generality but still relies on efficient computational methods for eigenvalue calculations and truncations of infinite sums. 
These references concentrate on the stopped Brownian motion case and there is no discussion about the simulation of reflected Brownian motion in wedges.

Simulation algorithms for the reflected Brownian motion have been only partly studied, although they share similarities with the stopped case.  Simulation algorithms for quantities related to reflected Brownian motion on an orthant in multi-dimensions can be found in \cite{blanchet2014} and \cite{blanchet2015}. They do not use the method of images, but the characterization through the Skorokhod problem and the so-called $ \varepsilon $-strong methodology which relies on the regularity of the reflection functional. Still, the method proposed has a theoretically infinite mean running time.

Other approximations methods which are closer to random walk versions of the reflected Brownian motion on tile domains are studied in \cite{Dubedat} and \cite{Kager}. Furthermore, other simulations methods for reflected Brownian motion which do not cover the case of the wedge can be found in \cite{Cos}, \cite{Bossy}, \cite{Burzdy} and \cite{Bayer} the references therein.

The reflected Brownian motion and more generally reflected processes have applications in finance (for example, in stochastic models where the process, which can model an interest rates, is constrained to be non-negative \cite{ha2009} or has other constraints like barriers such as in \cite{Ichiba}); in queueing models \cite{giorno1986}, etc. In particular, in the recent years there has been a lot of developments concerning the study of stationary measures of reflected Brownian motion such as \cite{DM} which is slightly related to the ideas which we use here to construct a simulation method on a wedge. For a review, on this topic and the relationship with queuing theory see e.g. \cite{dieker}.

Considering the particular case of a wedge may open the way to new simulation algorithms for reflected processes adapted to non-smooth domains with corner points in two or more dimensions (see Section \ref{sec:higher_dim}).

\medskip

The paper is organized as follows.
In Section \ref{section:setting} we state the framework of the problem, giving a parametrization of the wedge in $\mathbb{R}^2$.
We also give the formula for the density of the stopped Brownian motion in a wedge of angle $\alpha \in (0,2\pi)$. The density of the stopped Brownian motion had already been obtained in \cite{iyengar1985}. A more general case is studied in \cite{Ban} which provides a detailed history and a full proof that the density formula is the fundamental solution of the associated partial differential equation.

In Section \ref{section:density_formulas}, we provide the density of the reflected Brownian motion on the wedge. First, we use the method of images in a special type of wedges as it serves to understand how one induces the density of the reflected Brownian motion in the general case.  In the Appendix we include the proof in the reflected case on more general cones following \cite{Ban}.


In Section \ref{section:algorithms}, we give explicit algorithms for the simulation of the stopped and reflected Brownian motions. To do so, we first derive from \cite{metzler2010} and the formulas proved in Section \ref{section:density_formulas} expressions of densities of the exit time and  exit point from the wedge. Then, we give an algorithm for the simulation of the stopped Brownian motion using the explicit expressions in the case $\alpha = \pi/m$. The simulation algorithm is based on the simulation of a random sequence of stopped Brownian motion processes on domains with angle $\alpha = \pi/m$.
This first algorithm is useful for the simulation of reflected Brownian motion, but it also has its own interest. Then we give the simulation algorithm for the reflected Brownian motion with unit diffusion matrix.

In Section \ref{section:complexity}, we study the mean number of iterations of the algorithm to finish, denoted $\mathbb{E}[N]$. For the stopped Brownian motion, $\mathbb{E}[N]$ is bounded above by a constant. The reflected case is more difficult to deal with. In this case, $N$ is equal to the number of times a Brownian motion goes from one frontier of a wedge to another. We first give theoretical bounds for $\mathbb{E}[N]$ and prove that although the number of iterations is almost surely finite, $\mathbb{E}[N]$ is infinite, which is due to the events when the Brownian motion comes close to the origin. We then propose a modification of the algorithm: we stop the algorithm when the Brownian motion comes too close to the origin, measured by a parameter $ \varepsilon $ and use an approximation of the density for the reflected Brownian motion close to the origin, induced by the density formula in Section \ref{section:density_formulas}. We then prove that for the modified algorithm, $\mathbb{E}[N]$ has an upper bound which grows as $\varepsilon^{1-p}$ for all $ p\in (1,2) $, and that the error of approximation is of order $\varepsilon$.

In Section \ref{section:extensions}, we show how the algorithms can directly be applied for the simulation of the reflected and stopped Brownian motions with non-zero drift. Then we give an algorithm for the simulation of a class of It\={o} diffusion processes, reflected or stopped in a wedge in $\mathbb{R}^2$. This algorithm is in fact a direct application of the precedent algorithms, combined with a Euler-Maruyama scheme.

Finally, in Section \ref{section:simulations}, we perform Monte Carlo simulations for the estimation of reflected and stopped Brownian motions and It\={o} processes. We show that the bias in the algorithm from \cite{metzler2008} can be significant compared with our method which is exact. Simulations prove the need for the approximation algorithm for the reflected Brownian motion, as the exact algorithm takes too much time to be efficient. Clearly, one remaining subject which is not considered here is the reflecting case with non-unit diffusion coefficient. This remains an open problem. 

The authors would like to express thanks to the anonymous referee which provided references \cite{Ban} and \cite{DM} which improved the results and shortened the proof of Theorem \ref{Rformula_alpha_pi_m}.

\section{Notations}

We give a brief list of notations that are used through the text.

The natural logarithm is denoted with $ \log $.
We frequently use the notation $ \pm $ in order to denote the two rays that define a wedge. Sometimes rather than writing two equations we just write one using the symbol $ \pm $ or $ \mp $ meaning that we are stating two equations, one using the top symbols and another using the bottom symbols that appear throughout the equation. 

$ I_a $ stands for the modified Bessel function of the first kind given for $a \ge 0, \ x \ge 0$ by:
\begin{equation}
\label{eq:10}
I_a(x) = \sum_{k=0}^\infty \frac{x^{a+2k}}{2^{a+2k}k!\Gamma({k+a+1})}.
\end{equation}
This function satisfies the differential equation
\begin{equation}
\label{bessel_equation}
x^2 \frac{d^2 I_a(x)}{dx^2} + x \frac{d I_a(x)}{dx} - (x^2 + a^2) I_a(x) = 0, \ x \ge 0.
\end{equation}

We consider the space $\mathbb{R}^2$ endowed with the Euclidean norm denoted by $| \boldsymbol{\cdot} |$.

For $\mathcal{D} \subset \mathbb{R}^2$ and $k \in \mathbb{N}$, we denote by $\mathcal{C}_b^k(\mathcal{D})$ the set of real-valued functions defined on $\mathcal{D}$ which are bounded and have bounded partial derivatives up to the order $k$.

Now we define the probabilistic setting as follows. Let $(\Omega,\mathcal{F},\mathbb{P},(\mathcal{F}_t)_{t \ge 0})$ be a filtered probability space satisfying the usual conditions. Let $T >0$ be the time horizon and let $W=(W_t)_{t \ge 0}$ be a two-dimensional correlated Brownian motion on $(\Omega,\mathcal{F},\mathbb{P},(\mathcal{F}_t)_{t \ge 0})$. We denote by $\Sigma$ the covariance matrix of $W$
and we assume that $\Sigma$ is a non-singular matrix.
We define a family of probabilities on $(\Omega,\mathcal{F},(\mathcal{F}_t)_{t \ge 0})$ by:
$$ \forall x_0 \in \mathbb{R}^2, \ \mathbb{P}^{x_0} := \mathbb{P}_{|W_0 = x_0} .$$
We will denote by $\mathbb{E}^{x_0}$ the expectation under $\mathbb{P}^{x_0}$.

If $W=(W_t)_{t\geq 0}$ is a Brownian motion we will say ``stopped Brownian motion" to designate the process $(W_{t \wedge \tau})_{t\geq 0}$, and ``killed Brownian motion" to designate the process $(W_t \mathds{1}_{\tau > T})_{t\in [0,T]}$, where $\tau$ is some stopping time defined later.

We will frequently use in the proofs the Bessel process
$(R_t)_{t\geq 0}$ in dimension $\delta \ge 2$, i.e. a real-valued process satisfying the stochastic differential equation:
$$ R_t = r_0+B_t + \int_0^t\frac{\delta-1}{2}\frac{ds}{R_s} ,$$
where $(B_t)_{t\geq 0}$ is a standard one-dimensional Brownian motion starting from zero. The index of the Bessel process is defined as $ \nu=\frac{\delta}{2}-1$. 
 For a general reference to these matters we refer to \cite[Section 6]{jeanblanc2010}.

\section{Setting of the problem}
\label{section:setting}
\label{sec:3.1}

Let $a \in\mathbb{R}\cup\{\pm\}$ and consider the subset $\mathcal{D} \subset \mathbb{R}^2$ defined by one of the four following cartesian equations:
\begin{align}
\label{def:wedge}
\mathcal{D} =
\left\lbrace \begin{array}{ll}
\lbrace (x,y) \in \mathbb{R}^2, \ y \ge 0 \text{ and } y \le ax \rbrace & \text{ with } a > 0, \\
\lbrace (x,y) \in \mathbb{R}^2, \ y \ge 0 \text{ and } y \ge ax \rbrace & \text{ with } a < 0, \\
\lbrace (x,y) \in \mathbb{R}^2, \ y \ge 0 \text{ or } y \ge ax \rbrace & \text{ with } a > 0 , \\
\lbrace (x,y) \in \mathbb{R}^2, \ y \ge 0 \text{ or } y \le ax \rbrace & \text{ with } a < 0.
\end{array} \right.
\end{align}
The set $\mathcal{D}$ is called a wedge\footnote{The cases where $ a=0 $ are not considered here because they lead to cases where the problem can be simplified to a one dimensional problem.}. The angle of the wedge is denoted by $\alpha$ and chosen such that $\alpha \in (0,2\pi)$. An example is given in Figure \ref{wedge_figure}. 
We write the wedge $\mathcal{D}$ in polar coordinates as follows:
$$ \mathcal{D} = \lbrace (r\cos(\theta),r\sin(\theta)), \ r \in \mathbb{R}^+, \ \theta \in [0,\alpha] \rbrace .$$

We define its boundary as $ \partial \mathcal{D} = \partial \mathcal{D}^- \cup \partial \mathcal{D}^+$, where
\begin{align*}
\partial \mathcal{D}^- := & \lbrace (x,y) \in \mathcal{D}, y = 0 \rbrace = \lbrace (r\cos(\theta),r\sin(\theta)), \ r \in \mathbb{R}^+, \ \theta =0 \rbrace, \\
\partial \mathcal{D}^+ := & \lbrace (x,y) \in \mathcal{D}, y = ax \rbrace =\lbrace (r\cos(\theta),r\sin(\theta)), \ r \in \mathbb{R}^+, \ \theta =\alpha \rbrace.
\end{align*}

In general, we use the notation $\mathcal{D}=\langle \alpha\rangle  $ to denote a wedge which boundary is determined by the rays $ \partial\mathcal{D}^\pm$. In a similar way, we also generalize this notation to a general wedge which boundaries are determined by the rays at the angles $ 0\leq \alpha<\beta<2\pi $ as $ \mathcal{V}=\langle\alpha,\beta\rangle $.
\begin{figure}
	\centering
\begin{tikzpicture}
\coordinate (origin) at (0,0);
\coordinate (e1) at (1,0);
\coordinate (a) at (-1.3,2);
\draw[thick,->] (0,-0.7) -- (0,3);
\draw[thick,->] (-1.4,0) -- (3,0);
\draw[very thick,red] (0,0) -- (2.3,0);
\draw[very thick, red] (0,0) -- (a);
\draw[thick,blue,->] (1.6,0) -- (1.6,1) node[anchor=south] {$n(x_1)$};
\filldraw[black] (1.6,0) circle (2pt) node[anchor=north] {$ x_1 $};
\draw[thick,blue,->] ($(0,0)!0.8!(a)$) -- ($(0,0)!0.8!(a) + (0,0)!0.4!(2,1.3)$) node[anchor=south] {$n(x_2)$};
\filldraw[black] ($(0,0)!0.8!(a)$) circle (2pt) node[anchor=north] {$ x_2 $};
\draw pic[draw,angle radius=1cm,"$\alpha$" anchor = west] {angle = e1--origin--a};
\end{tikzpicture}
	\caption{Example of a wedge of angle $\alpha$.}
	\label{wedge_figure}
\end{figure}
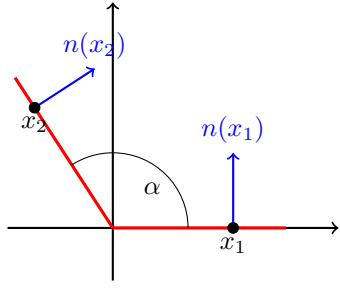

We use the following stopping times:
\begin{align*}
\tau & = \inf \ \lbrace t>0, W_t \in \partial \mathcal{D} \rbrace. \\
\tau^\pm & = \inf \ \lbrace t>0, W_t \in \partial \mathcal{D}^\pm \rbrace.
\end{align*}

As in \cite{metzler2010}, we parametrize the covariance matrix of $W$ via its Cholesky decomposition and write $\Sigma = \sigma \sigma^T$ with
$$ \sigma = \begin{pmatrix}
\sigma_1 \sqrt{1-\rho^2} & \sigma_1 \rho \\
0 & \sigma_2
\end{pmatrix}, $$
where $\rho \in (-1,1)$ and $\sigma_1, \sigma_2 \ge 0$. Then assuming that $ \sigma $ is invertible, we consider the process $W' := \sigma^{-1}W$, which is a standard two-dimensional Brownian motion with independent components. Furthermore, using the explicit formula for the inverse matrix, we deduce that
\begin{align*}
W^2 = 0 & \iff W'^2 = 0, \\
W^2 = aW^1 & \iff \left\lbrace
	\begin{array}{ll}
		W'^2 = \frac{a\sigma_1\sqrt{1-\rho^2}}{\sigma_2-a\sigma_1\rho}W'^1 & \text{ if } \sigma_2-a\sigma_1\rho \ne 0, \\
		W'^1 = 0 & \text{ if } \sigma_2-a\sigma_1\rho = 0.
	\end{array} \right.
\end{align*}
In the case that $\sigma_2-a\sigma_1\rho = 0$, then the problem reduces to two independent one-dimensional Brownian motions being reflected or stopped in the first quadrant, and therefore the problem can be reduced to using two independent reflected or stopped Brownian motions in one dimension. For explicit formulas, see \eqref{one_dimension_reflected} and \eqref{one_dimension_killed}. Thus in the following we assume that $\sigma_2-a\sigma_1\rho \ne 0$.
So $\tau^-$ is the first passage time of $W'$ through the horizontal axis and $\tau^+$ is the time of the first passage of $W'$ through the line $y = a' x$, where
\begin{equation}
\label{eq:a_prime}
a' = \frac{a\sigma_1\sqrt{1-\rho^2}}{\sigma_2-a\sigma_1\rho} ,
\end{equation}
so that $\tau=\tau^-\wedge\tau^+$ is the exit time of $W'$ from the wedge $\mathcal{D}'=\langle \alpha'\rangle $, where $ \alpha' $ is defined as $\arctan(a')$ or $\pi + \arctan(a')$ or $2\pi + \arctan(a)$ depending on the cases. The process $W'$ starts at $x'_0 = \sigma^{-1}\cdot x_0$. The possible cases are summed up in Table \ref{figure:wedge_four_cases}.

After the above transformation the original wedge is transformed into a new one according to this table.
Therefore mutatis-mutandis, we shall omit the primes in the notation and without loss of generality we directly assume that the process $W$ has the correlation matrix $\Sigma = I_2$ and the wedge is given as in \eqref{def:wedge} with angle $ \alpha $.

\begin{figure}
\centering
\begin{tabular}{|c|c|c|c|c|}
\hline
\rule[-1ex]{0pt}{2.5ex} \shortstack{Cartesian \\ Equation} & \shortstack{$\lbrace y \ge 0 \text{ and } y \le ax \rbrace,$ \\ $ a > 0$} & \shortstack{$\lbrace y \ge 0 \text{ and } y \ge ax \rbrace,$ \\ $a < 0 $} & \shortstack{$\lbrace y \ge 0 \text{ or } y \ge ax \rbrace,$ \\ $a > 0$} & \shortstack{$\lbrace y \ge 0 \text{ or } y \le ax \rbrace,$ \\ $a < 0$} \\
\hline
\rule[-1ex]{0pt}{2.5ex} Value of $\alpha$ & $\arctan(a)$ & $\pi + \arctan(a)$ & $\pi + \arctan(a)$ & $2\pi + \arctan(a)$ \\
\hline
\rule[-1ex]{0pt}{2.5ex} \shortstack{Wedge for \\the original \\ problem  \\ \textcolor{white}{a} \\ \textcolor{white}{a} \\ \textcolor{white}{a} \\ \textcolor{white}{a} } &
\begin{tikzpicture}[scale=0.6, transform shape]
\coordinate (origin) at (0,0);
\coordinate (e1) at (1,0);
\coordinate (a) at (1.3,2);
\draw[thick,->] (0,-2) -- (0,3);
\draw[thick,->] (-2,0) -- (3,0);
\draw[very thick,red] (0,0) -- (2.3,0);
\draw[very thick, red] (0,0) -- (a);
\draw pic[draw,angle radius=1cm,"$\alpha$" anchor = west] {angle = e1--origin--a};
\end{tikzpicture} &
\begin{tikzpicture}[scale=0.6, transform shape]
\coordinate (origin) at (0,0);
\coordinate (e1) at (1,0);
\coordinate (a) at (-1.3,2);
\draw[thick,->] (0,-2) -- (0,3);
\draw[thick,->] (-2,0) -- (3,0);
\draw[very thick,red] (0,0) -- (2.3,0);
\draw[very thick, red] (0,0) -- (a);
\draw pic[draw,angle radius=1cm,"$\alpha$" anchor = west] {angle = e1--origin--a};
\end{tikzpicture} &
\begin{tikzpicture}[scale=0.6, transform shape]
\coordinate (origin) at (0,0);
\coordinate (e1) at (1,0);
\coordinate (a) at (-1.3,-2);
\draw[thick,->] (0,-2) -- (0,3);
\draw[thick,->] (-2,0) -- (3,0);
\draw[very thick,red] (0,0) -- (2.3,0);
\draw[very thick, red] (0,0) -- (a);
\draw pic[draw,angle radius=1cm,"$\alpha$" anchor = west] {angle = e1--origin--a};
\end{tikzpicture} &
\begin{tikzpicture}[scale=0.6, transform shape]
\coordinate (origin) at (0,0);
\coordinate (e1) at (1,0);
\coordinate (a) at (1.3,-2);
\draw[thick,->] (0,-2) -- (0,3);
\draw[thick,->] (-2,0) -- (3,0);
\draw[very thick,red] (0,0) -- (2.3,0);
\draw[very thick, red] (0,0) -- (a);
\draw pic[draw,angle radius=1cm,"$\alpha$" anchor = west] {angle = e1--origin--a};
\end{tikzpicture} \\
\hline
\rule[-1ex]{0pt}{2.5ex} \shortstack{Equation \\ of $W'$ if \\ $\sigma_2 - a \sigma_1 \rho > 0$} & \shortstack{$\lbrace y \ge 0 \text{ and } y \le a'x \rbrace,$ \\ $a'>0$}  & \shortstack{$\lbrace y \ge 0 \text{ and } y \ge a'x \rbrace,$ \\ $a'<0$} & \shortstack{$\lbrace y \ge 0 \text{ or } y \ge a'x \rbrace,$ \\ $a'>0$} & \shortstack{$\lbrace y \ge 0 \text{ or } y \le a'x \rbrace,$ \\ $a'<0$} \\
\hline
\rule[-1ex]{0pt}{2.5ex} \shortstack{Equation \\ of $W'$ if \\ $\sigma_2 - a \sigma_1 \rho <0$} & \shortstack{$\lbrace y \ge 0 \text{ and } y \ge a'x \rbrace,$ \\ $a'<0$} & \shortstack{$\lbrace y \ge 0 \text{ and } y \le a'x \rbrace,$ \\ $a'>0$} & \shortstack{$\lbrace y \ge 0 \text{ or } y \le a'x \rbrace,$ \\ $a'<0$} & \shortstack{$\lbrace y \ge 0 \text{ or } y \ge a'x \rbrace,$ \\ $a'>0$} \\
\hline
\end{tabular}
\caption{Table summing up the four different cases of wedge $\mathcal{D}$}
\label{figure:wedge_four_cases}
\end{figure}

\bigskip

In some results, we assume that the angle of the wedge satisfies the condition:
$$\alpha = \frac{\pi}{m} ,$$
for some $m \in \mathbb{N}$.
We define adjacent wedges as follows. For $k = 0, 1, ..., 2m-1$, let
$$ \mathcal{D}_k := \lbrace (r\cos(\theta),r\sin(\theta)): \ r \in [0,+\infty), \ \theta \in [k\alpha, (k+1)\alpha] \rbrace.$$
We remark that $\mathcal{D}_0 = \mathcal{D}$ and that $ \mathbb{R}^2 = \bigcup_{k=0}^{2m-1} \mathcal{D}_k $. 
We also define the transformation $T_k$:
\begin{align*}
	T_k : & \mathcal{D}_0 \longrightarrow \mathcal{D}_k \\
	T_k((r \cos \theta, r \sin \theta)):=&(r \cos(\vartheta_k), r \sin(\vartheta_k))\\
	\vartheta_k:=&
	\left\lbrace
	\begin{array}{cc}
		(k+1)\alpha-\theta;& k\text{ odd}, \\ k\alpha+\theta;& k\text{ even}.
	\end{array}\right.
\end{align*}

From the definition, it follows that $T_k$ is an isometric bijection between $\mathcal{D}_0$ and $\mathcal{D}_k$. This setting is represented in Figure \ref{fig:partition_Dk}.

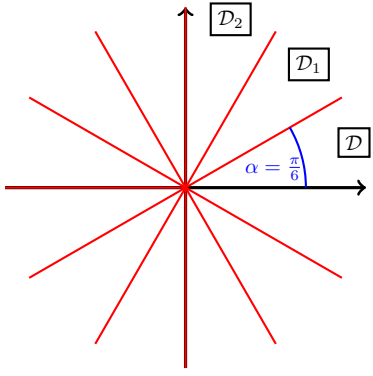
\begin{figure}
	\label{figure_method_images}
	\centering
	\begin{tikzpicture}[thick,scale=0.6, every node/.style={scale=0.8}]
		\coordinate (origin) at (0,0);
		\coordinate(e1) at (4,0);
		\draw[very thick,->] (0,-4) -- (0,4);
		\draw[very thick,->] (-4,0) -- (4,0);
		
		\foreach \k in {1,...,11}
		{
			\draw[red] (origin) -- ([rotate around={\k*30:(origin)}]e1);
		}
		
		\coordinate (f1) at ([rotate around={30:(origin)}]e1);
		\draw[blue] pic[draw,angle radius=2cm,"$\hspace{-0.3cm}\alpha = \frac{\pi}{6}$" anchor = west] {angle = e1--origin--f1};
		\node[draw] at ($(e1)!0.5!(f1)$) {$\mathcal{D}$};
		
		\coordinate (f2) at ([rotate around={60:(origin)}]e1);
		\node[draw] at ($(f1)!0.5!(f2)$) {$\mathcal{D}_1$};
		
		\coordinate (f3) at ([rotate around={90:(origin)}]e1);
		\node[draw] at ($(f2)!0.5!(f3)$) {$\mathcal{D}_2$};

	\end{tikzpicture}
	\caption{Partition of $\mathbb{R}^2$ using $\{\mathcal{D}_k\}_k$}
	\label{fig:partition_Dk}
\end{figure}

\bigskip

The density of the killed Brownian motion and some related random quantities appear in \cite{metzler2010}. We first quote the explicit density result for killed Brownian motion which is fully proven in \cite[Lemma 1]{Ban} for generalized multidimensional cones.
\begin{theorem}[Killed case]
	\label{Kformula_alpha_pi_m}Assume that $\mathcal{D}=\langle \pi/m \rangle $ for some $m \in \mathbb{N}$. Then we have the following formula for the density of  killed Brownian motion on the wedge $ \mathcal{D} $:
	\begin{align}
		\label{alpha_pi_m_killed}
		\forall t>0, \ x, y \in \mathcal{D}, \ \mathbb{P}^{x}(W_t \in dy, \  \tau > t) & = \frac{1}{2\pi t} \sum_{k=0}^{2m-1} (-1)^k e^{-\frac{|x-T_ky|^2}{2t}} dy .
	\end{align}
	In general, let $\alpha \in (0,2\pi)$, $ \mathcal{D}=\langle \alpha\rangle$ and  $x = (r_0 \cos(\theta_0),r_0 \sin(\theta_0))\in \mathcal{D}$. Then for any  $y=(r\cos(\theta),r\sin(\theta))\in \mathcal{D}$, the density of the killed Brownian motion on the wedge $ \mathcal{D} $ is given by
	\begin{align}
		\label{formula_stopped}
		& \mathbb{P}^{x}(W_t \in dy, \ \tau > t) = \frac{2r}{t \alpha} e^{-\frac{r^2 + r_0^2}{2t}} \sum_{n=1}^{\infty} I_{n\pi / \alpha}\left(\frac{rr_0}{t}\right) \sin\left(\frac{n\pi \theta}{\alpha}\right) \sin\left(\frac{n \pi \theta_0}{\alpha}\right) dr d\theta .
	\end{align}
\end{theorem}

\section{Analytic formulas for the density of the reflected process }
\label{section:density_formulas}
\label{section_alpha_pi_m}

In this section, we give explicit expressions for the densities of the reflected Brownian motion.
Before doing this, we review the one dimensional case which will also help explain the motivation for the simulation methods to be introduced later.
The one-dimensional case can be treated using the classic reflection principle, which directly gives the density of the killed and reflected Brownian motions.
In the one-dimensional case, we can assume that $\mathcal{D} = [0,+\infty)$ and that $\Sigma = 1$. Then for all non-negative measurable functions $f : \mathcal{D} \rightarrow \mathbb{R}$ and $x_0 \ge 0$,
\begin{align}
\label{one_dimension_killed}
& \mathbb{E}^{x_0}\left[f(W_T)\mathds{1}_{\tau > T}\right] = \mathbb{E}^{x_0}\left[ f(W_T) \mathds{1}_{W_T > 0} \left( 1 - e^{-\frac{x_0(x_0+W_T)}{T}} \right) \right], \\
\label{one_dimension_reflected}
& \mathbb{E}^{x_0}\left[f(X_T)\right] = \mathbb{E}^{x_0}\left[ f(W_T) \mathds{1}_{W_T > 0} \left( 1 + e^{-\frac{x_0(x_0+W_T)}{T}} \right) \right].
\end{align}
The above formulas show that the simulation of the killed and reflected Brownian motions in the one-dimensional case can be easily performed using changes of measures on the original unrestricted Brownian motion.

\bigskip

 The proof in the reflected case in $\mathbb{R}^2$ shares common arguments with the killed case. 
Throughout these arguments, one uses results from classical theory of partial differential equations on wedges. For a general reference on this topic, we refer the reader to \cite[chapter IV]{Lady} and \cite{kou_zhong2016} where one can find existence and uniqueness results for the partial differential equations that are related to the problems  in this article.

\begin{definition}
	\label{def:1}
	We define the normal reflection process of a continuous adapted stochastic process $\xi$ on the subset $\mathcal{D}$, as the unique solution $ (X_t,L_t)_{t\geq 0} $ of the following problem (see for example \cite{pilipenko2014}):
	\begin{enumerate}
		\item $X_t = \xi_t + L_t$, where $L=(L_t)_{t\geq 0}$ is some adapted process.
		\item $\forall t \ge 0, \ X_t \in \mathcal{D}$.
		\item $L$ is a  continuous process of bounded variation and $L_0 = 0$.
		\item $\forall t \ge 0, \ L_t=\int_0^t n(X_s)\mathds{1}_{X_s \in \partial \mathcal{D}}d|L|_s $, which means that the process $L$ increases only when $X_t \in \partial \mathcal{D}$ and that the reflection is normal to the boundary.
	\end{enumerate}
\end{definition}
Therefore the normally reflected Brownian motion is the process that is obtained from the above equation by taking $ \xi=W $. In the one dimensional case, $ L $ corresponds to the local time of Brownian motion. 

The existence and uniqueness of a Brownian motion reflected in a wedge, which is a non-smooth domain, has been proved in the decorrelated case in \cite{varadhan1985}.
In this article we will only consider normal reflections in the case of unitary diffusion matrix. Note that if we use the decorrelation step as in \eqref{eq:a_prime} for the reflected Brownian motion, this transformation changes the normal reflected process into an obliquely reflected process. We do not know how to obtain the density of such a process.

\begin{theorem}[Reflected case]
\label{Rformula_alpha_pi_m}
Assume that $\mathcal{D}=\langle \pi/m \rangle $ for some $m \in \mathbb{N}$. Then we have the following formulas for the density of the reflected Brownian motion on the wedge $ \mathcal{D}=\langle \pi/m \rangle $:
\begin{align}
\label{alpha_pi_m_reflected}
\forall t>0, \ x, y \in \mathcal{D}, \ \mathbb{P}^{x}(X_t \in dy) & = \frac{1}{2\pi t} \sum_{k=0}^{2m-1} e^{-\frac{|x-T_ky|^2}{2t}} dy .
\end{align}
In the general case for $\alpha \in (0,2\pi)$, $ \mathcal{D}=\langle \alpha\rangle$ and $x = (r_0 \cos(\theta_0),r_0 \sin(\theta_0))\in \mathcal{D}$ we have that for any $y=(r\cos(\theta),r\sin(\theta))\in \mathcal{D}$, the density of the reflected Brownian motion on the wedge $ \mathcal{D}=\langle \alpha \rangle $ is given by
\begin{align}
\label{formula_reflected}
& \mathbb{P}^{x}(X_t \in dy) = \frac{2r}{t \alpha} e^{-\frac{r^2 + r_0^2}{2t}} \left( \frac{1}{2}I_0\left(\frac{rr_0}{t}\right) + \sum_{n=1}^{\infty} I_{n\pi / \alpha}\left(\frac{rr_0}{t}\right) \cos\left(\frac{n\pi \theta}{\alpha}\right) \cos\left(\frac{n \pi \theta_0}{\alpha}\right) \right) dr d\theta . 
\end{align}
\end{theorem}
The proof of the above theorem is given in \ref{appendix:B}.
The idea of using the simple case $ \mathcal{D}=\langle \pi/m \rangle  $ to infer a general result appears in the literature of  the image method. It has been used in a non-trivial way in order to deduce stationary measures for reflected processes (see e.g. \cite{DM}). A formula similar to \eqref{formula_reflected} appears in \cite[page 379 (8)]{carslaw1959}, which is proved using Laplace transform inversion methods. We have decided to include a proof which uses the image method because it gives some intuition on the simulations methods to be introduced later. 

\section{Exact simulation algorithms}
\label{section:algorithms}

Theorems \ref{Kformula_alpha_pi_m} and \ref{Rformula_alpha_pi_m} give formulas that can be directly used to simulate the final values of the reflected and stopped processes in the case that the wedge angle is $\alpha = \pi/m$ (see Sections \ref{algorithm_for_stopped} and \ref{algorithm_for_reflected}). However, direct algorithms that arise due to these results demand the use of approximations and  larger computational time in general. Instead, we investigate an alternative simulation method using the tractable case $\alpha = \pi/m$ and give algorithms for the exact simulation of the reflected and stopped processes in any wedge.

\subsection{Formulas for the simulation of $(\tau,W_\tau)$ in the case $\alpha = \pi/m$}


Before providing the simulation method, we give a formula that will be the key to provide a simplified simulation method which avoids the calculation of Bessel functions.

\begin{theorem}
Let $\mathcal{V}=\langle \alpha^-,\alpha^+\rangle$ and assume that  $\alpha^+ - \alpha^-  = \pi/m=: \alpha$ for some $m \in \mathbb{N}$ and let $\tau$ be the hitting  time on the wedge $\mathcal{V}$ for $W$  which starts at  $x_0= (r_0 \cos(\theta_0), r_0 \sin(\theta_0)) \in \mathcal{V}$, then
\begin{equation}
\mathbb{P}^{x_0}(\tau \in dt, \ W_\tau \in dy^\pm) = \frac{r_0}{2\pi t^2}e^{-\frac{r^2+r_0^2}{2t}} \sum_{k=0}^{m-1} \sin\left(\gamma_k^\pm\right)e^{\frac{rr_0}{t}\cos\left(\gamma_k^{\pm}\right)} dr dt,
\end{equation}
where $y^\pm=(r\cos(\alpha^\pm),r\sin(\alpha^\pm)) \in \partial \mathcal{V}^{\pm}$. We use the notation
\begin{align}
\gamma_k^\pm :=& \ \pm \alpha^\pm \pm \frac{2k\pi}{m} \mp \theta_0.
\end{align}
\end{theorem}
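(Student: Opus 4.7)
The strategy is to recover the joint density of $(\tau, W_\tau)$ as one half of the inward normal derivative of the Dirichlet heat kernel on $\mathcal{V}$, where the kernel itself is given explicitly by Theorem \ref{formula_alpha_pi_m}. By rotational invariance of the standard two-dimensional Brownian motion (we are in the setting of Section \ref{section_decorrelation} where $\Sigma = I_2$), I first rotate by $-\alpha^-$ so that $\mathcal{V}$ becomes the canonical wedge $\langle 0, \alpha\rangle$ with $\alpha = \pi/m$ and the starting point has polar angle $\theta_0 - \alpha^-$. Undoing this rotation at the end simply replaces $\theta_0$ by $\theta_0 - \alpha^-$ in the output formula, which is precisely how the ``$\pm\alpha^\pm$'' piece of $\gamma_k^\pm$ is produced.

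Writing $y = (r\cos\varphi, r\sin\varphi)$ and $x_0 = (r_0 \cos\theta_0, r_0 \sin\theta_0)$, note that $|x_0 - T_k y|^2 = r_0^2 + r^2 - 2 r_0 r \cos(\theta_0 - \theta^k(\varphi))$ with $\theta^k(\varphi) = k\alpha + \varphi$ for $k$ even and $(k+1)\alpha - \varphi$ for $k$ odd, so Theorem \ref{formula_alpha_pi_m} reads
\begin{equation*}
p(t,x_0,y) \;=\; \frac{1}{2\pi t}\, e^{-\frac{r^2 + r_0^2}{2t}} \sum_{k=0}^{2m-1} (-1)^k \, e^{\frac{r r_0}{t}\cos(\theta_0 - \theta^k(\varphi))}.
\end{equation*}
The crucial observation when applying $\partial_\varphi$ by the chain rule is that $(-1)^k\, \partial_\varphi \theta^k \equiv +1$ for every $k$: the sign flip $\partial_\varphi \theta^k = -1$ coming from the reflection in odd layers exactly compensates the $(-1)^k$, so the alternating sum becomes a sum with all $+$ signs, as any Dirichlet boundary derivative of a non-negative solution must be. Evaluating at $\varphi = 0$ on $\partial\mathcal{V}^-$ (inward normal $+\tfrac{1}{r}\partial_\varphi$) and at $\varphi = \alpha$ on $\partial\mathcal{V}^+$ (inward normal $-\tfrac{1}{r}\partial_\varphi$), the $2m$ values of $\theta^k$ coalesce into the $m$ distinct angles $\{2j\alpha : j=0,\ldots,m-1\}$ and $\{(2j+1)\alpha : j=0,\ldots,m-1\}$ respectively, each appearing exactly twice (using $2m\alpha \equiv 0 \bmod 2\pi$). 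This factor of $2$ cancels the $\tfrac{1}{2}$ from the exit-density formula; matching $\theta_0 - \theta^k$ on $\partial\mathcal{V}^-$ with $\gamma_k^-$ and matching $\theta^k - \theta_0$ on $\partial\mathcal{V}^+$ with $\gamma_k^+$ (using $\sin(-x) = -\sin(x)$ and $\cos(-x) = \cos(x)$ to absorb the sign flip of the inward normal), and then undoing the initial rotation, yields the stated formulas.

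The main technical point is the classical identity $\mathbb{P}^{x_0}(\tau \in dt, W_\tau \in dy) = \tfrac{1}{2}\,\partial_n p(t,x_0,y)\,\sigma(dy)\,dt$ applied to a domain with a non-smooth corner. Since the origin is polar for two-dimensional Brownian motion, $\mathbb{P}^{x_0}(W_\tau = 0) = 0$ and the exit distribution is concentrated on $\partial\mathcal{V}\setminus\{0\}$, where the boundary is locally smooth. On this smooth part the identity can be derived by differentiating the survival probability $\mathbb{P}^{x_0}(\tau > t) = \int_{\mathcal{V}} p(t,x_0,y)\,dy$ in $t$, using the heat equation $\partial_t p = \tfrac{1}{2}\Delta_y p$ and Green's identity together with the Dirichlet boundary condition $p|_{\partial\mathcal{V}} = 0$ provided by Theorem \ref{formula_alpha_pi_m}; integration by parts near the corner is justified by exhausting $\mathcal{V}$ with smooth subdomains and using that the corner has vanishing surface measure. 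This pointwise identification on the two smooth rays concludes the proof.
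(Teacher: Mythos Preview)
Your computation is correct and coincides with the paper's: both rotate to the canonical wedge, write the Dirichlet kernel \eqref{alpha_pi_m_killed} in polar form, differentiate in the angular variable at the boundary, exploit the cancellation $(-1)^k\,\partial_\varphi\theta^k\equiv +1$, and pair up the $2m$ images into $m$ distinct angles (each counted twice). The difference is in how the link ``joint exit law $=\tfrac12\,\partial_n p$'' is justified. The paper does not derive this identity from scratch: it quotes the Bessel-series expression for $\mathbb{P}^{x_0}(\tau\in dt,\,W_\tau\in dy)$ from \cite{metzler2010} (cf.\ \eqref{joint_density_tau_r_minus}) and then observes, via term-by-term differentiation of \eqref{formula_stopped} (legitimised by Proposition~\ref{convergence_formula}), that this series equals $\tfrac{1}{2r^2}\,\partial_\theta|_{\theta=0}$ of the killed density. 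Your route instead argues the identity directly by Green's formula on $\mathcal{V}$. One small point: as written, your Green's-identity step yields only the marginal density of $\tau$ (from $-\partial_t\mathbb{P}^{x_0}(\tau>t)$); to get the \emph{joint} density with $W_\tau$ you need the localised version, e.g.\ by inserting a boundary test function $\phi$ and differentiating $\mathbb{P}^{x_0}(\tau>t)-\mathbb{E}^{x_0}[\phi(W_\tau)\mathds{1}_{\tau\le t}]$, or equivalently invoking the classical identification of the Poisson kernel for the heat equation with the normal derivative of the Dirichlet Green function on the smooth part of $\partial\mathcal{V}$. This is standard and your remark that the corner is polar handles the only non-smooth point, so the argument goes through. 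Your approach is more self-contained; the paper's is shorter given that it can cite \cite{metzler2010}.
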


\begin{proof}
We can assume that $\alpha^-=0$ without loss of generality. We use the formulas \cite[(1.5), (1.6)]{metzler2010}. If $y \in \partial \mathcal{D}^-$, writing $y = (r,0)$ for some $r > 0$, we have:
\begin{equation}
\label{joint_density_tau_r_minus}
\mathbb{P}^{x_0}(\tau \in dt, \ W_\tau \in dy) = \frac{\pi}{\alpha^2 t r} e^{-\frac{r^2 +r_0^2}{2t}} \sum_{n=1}^\infty n \sin\left( \frac{n\pi\theta_0}{\alpha}\right) I_{n\pi/\alpha}\left(\frac{rr_0}{t}\right) dr dt .
\end{equation}

Using \eqref{formula_stopped} and switching the order of derivation and integration, which can be justified (see the proof of Theorem \ref{Rformula_alpha_pi_m} with Corollary \ref{cor:2} in the Appendix), we obtain for $y \in \partial \mathcal{D}^-$:
$$ \mathbb{P}^{x_0}(\tau \in dt, \ W_\tau \in dy) = \frac{1}{2r^2} \left.\frac{\partial}{\partial \theta} \right|_{\theta = 0} \mathbb{P}^{x_0}(\tau > t, W_t \in d(r\cos(\theta),r\sin(\theta))).$$
So that in the case $\alpha = \pi/m$, using \eqref{alpha_pi_m_killed} in polar coordinates and the definition of $ \vartheta_k $:
\begin{align}
\mathbb{P}^{x_0}(\tau \in dt, \ W_\tau \in dy) & = \frac{1}{2r^2} \left.\frac{\partial}{\partial \theta} \right|_{\theta = 0} \frac{r}{2\pi t} e^{-\frac{r^2+r_0^2}{2t}} \sum_{k=0}^{2m-1} (-1)^k e^{\frac{1}{t}rr_0\cos(\theta_0 - \vartheta_k )} dr dt \notag\\
& = \frac{r_0}{2\pi t^2} e^{-\frac{r^2+r_0^2}{2t}} \sum_{k=0}^{m-1} \sin(\theta_0 - 2k\alpha) e^{\frac{rr_0}{t}\cos(\theta_0 - 2k\alpha)} dr dt.
\label{eq:5.4}
\end{align}
Likewise, we obtain a similar formula for the case $y \in \partial \mathcal{D}^+$.
\end{proof}

From here, we present a method to simulate $\tau$ and $W_\tau$ in the case of the wedge $\mathcal{V}=\langle \alpha^-,\alpha^+\rangle$ with $\alpha^+-\alpha^- = \pi/m$. For the simulation of $W_\tau$, we use the same method which is proposed in \cite[Section 2.1]{metzler2008} but we restrict to the case $\alpha=\pi/m$. On the other hand, we propose an unbiased simulation method for  $\tau$ in comparison with the method proposed in \cite{metzler2008} which has a bias difficult to estimate (see \cite[Proposition 2.1.8]{metzler2008} and the remark that follows).

\textbf{Simulation of $W_\tau$:} First, we simulate the selection of the boundary line on which $W_\tau$ arrives, with a Bernoulli distribution. These two boundaries are denoted by $ \partial \mathcal{V}^\pm=
\lbrace (r\cos(\alpha^\pm),r\sin(\alpha^\pm)): \ r \ge 0 \rbrace$. Following \cite[Corollary 2.1.5]{metzler2008} we have:
\begin{equation}
\label{eq:which_frontier}
\mathbb{P}^{x_0}(W_\tau \in \mathcal{V}^+) = \frac{\theta_0-\alpha^-}{\alpha^+-\alpha^-} ,
\end{equation}

We simulate on which frontier $W_\tau$ arrives first and then we directly simulate $W_\tau$, without simulating $\tau$ yet.
Following \cite[Proposition 2.1.6]{metzler2008} one can compute explicit formulas for the distribution function of the radius of exit given that it exits at $ \mathcal{V}^+  $ or $ \mathcal{V}^-  $. Also its inverse can be computed and therefore the inverse transformation method for simulation can be applied. Using these formulas, we can simulate the radius $r_\tau$ as:
\begin{equation}
\label{eq:metzler_radius}
r_\tau = \left\lbrace \begin{array}{ll}
r_0 \left( \cos\left(\frac{\pi \theta_0}{\alpha}\right) - \frac{\sin\left(\frac{\pi \theta_0}{\alpha}\right)}{\tan((\pi - \pi\theta_0/\alpha)(U-1))} \right)^{\alpha/\pi} \ & \text{ if } W_\tau \in \mathcal{V}^- , \\
r_0 \left(-\cos\left(\frac{\pi \theta_0}{\alpha}\right) - \frac{\sin\left(\frac{\pi \theta_0}{\alpha}\right)}{\tan((\pi\theta_0/\alpha)(U-1))} \right)^{\alpha/\pi} \ & \text{ if } W_\tau \in \mathcal{V}^+ ,
\end{array} \right.
\end{equation}
with $U \sim \mathcal{U}([0,1])$.

\smallskip

\textbf{Simulation of $\tau$:} Knowing $W_\tau$, we simulate $\tau$ according to the conditional formula (see \eqref{eq:5.4} and recall that $ dy^{\pm}=(dr\cos(\alpha^\pm),dr\sin(\alpha^\pm)) $):
\begin{equation}
\label{simulation_tau}
\mathbb{P}^{x_0}(\tau \in dt | \ W_\tau =y^\pm) = \left((\mathbb{P}^{x_0}(W_\tau \in dy^\pm))^{-1} dr\right) \frac{r_0}{2\pi t^2} \sum_{k=0}^{m-1} \sin(\gamma_k^\pm)\exp\left(-\frac{(r-r_0\cos(\gamma_k^\pm))^2 + r_0^2\sin^2(\gamma_k^\pm)}{2t}\right) dt.
\end{equation}
Since some of the coefficients $\sin(\gamma_k^\pm)$ are negative, we cannot directly simulate according to this distribution as a mixture. Instead we select the indexes $k_0^\pm, \ \ldots, \ k_{p^\pm}^\pm$ such that for all $i$, $\sin(\gamma_{k_i^\pm}^\pm) \ge 0$ and write:
\begin{equation}
\label{eq:tau_sampling}
\mathbb{P}^{x_0}(\tau \in dt | \ W_\tau =y^{\pm}) \le \left((\mathbb{P}^{x_0}(W_\tau \in dy^\pm))^{-1}dr\right) \frac{r_0}{2\pi t^2} \sum_{i=0}^{p^\pm}\sin(\gamma_{k_i^\pm}^\pm) \exp\left(-\frac{(r-r_0\cos(\gamma_{k_i^\pm}^\pm))^2 + r_0^2\sin^2(\gamma_{k_i^\pm}^\pm)}{2t}\right) dt.
\end{equation}
This is the distribution of a discrete mixture of random variables $\xi_k$ for $0 \le k \le p^\pm$, where $\xi_k$ is the inverse of an exponential random variable of parameter $((r-r_0\cos(\gamma_{k_i^\pm}^\pm))^2 + r_0^2\sin^2(\gamma_{k_i^\pm}^\pm))/2 $.
With this information, we build an acceptance-rejection sampling, based on the density on the right hand side of \eqref{eq:tau_sampling} and accept the sample value $t$ under the condition
\begin{align*}
U & < \left(\sum_{k=0}^{m-1} \sin(\gamma_k^\pm) \exp\left(-\frac{(r-r_0\cos(\gamma_k^\pm))^2 + r_0^2\sin^2(\gamma_k^\pm)}{2t}\right)\right) \\
& \quad \times \left(\sum_{i=0}^{p^\pm}\sin(\gamma_{k_i^\pm}^\pm) \exp\left(-\frac{(r-r_0\cos(\gamma_{k_i^\pm}^\pm))^2 + r_0^2\sin^2(\gamma_{k_i^\pm}^\pm)}{2t}\right)\right)^{-1} ,
\end{align*}
with $U \sim \mathcal{U}([0,1])$.

\smallskip

\textbf{Simulation in the case $ W $ does not exit the wedge:} So far, we have provided a methodology in order to simulate the exit location and time for the simpler wedge $ \mathcal{V}=\langle \alpha^-,\alpha^+\rangle $ with $ \alpha^+-\alpha^-=\pi/m $. Now we discuss the case when $ W $ does not leave this wedge in the time interval $ [0,T] $. To simulate the final value $W_T$ of the Brownian motion starting from $x_0$ conditionally to the fact that it does not exit the wedge $\mathcal{V}$ on $[0,T]$, we need to simulate according to the density \eqref{alpha_pi_m_killed}, which is a sum where some terms are negative. We propose the following method.

A slight generalization of \eqref{alpha_pi_m_killed} to the wedge $\mathcal{V}=\langle \alpha^-,\alpha^+\rangle$ with
$$ \widetilde{\vartheta}_k := \left\lbrace\begin{array}{ll}
\vartheta_k + 2\alpha^- \ & \text{ if } k \text{ odd} \\
\vartheta_k \ & \text{ if } k \text{ even},
\end{array} \right.$$
gives, for $y \in \mathcal{V}$:
\begin{equation}
\label{eq:alpha_pi_m_killed_simulation}
\mathbb{P}^{x_0}(W_T \in dy| \ \tau >T) = \frac{\mathbb{P}^{x_0}(\tau>T)^{-1}}{2\pi T} \sum_{k=0}^{2m-1} (-1)^k e^{-\frac{|x_0-\widetilde{T}_ky|^2}{2T}}dy \le \frac{\mathbb{P}^{x_0}(\tau>T)^{-1}}{2\pi T} \sum_{k=0}^{m-1} e^{-\frac{|x_0-\widetilde{T}_{2k}y|^2}{2T}}dy ,
\end{equation}
where $\widetilde{T}_k(r\cos(\theta),r\sin(\theta)) = (r\cos(\widetilde{\vartheta}_k),r\sin(\widetilde{\vartheta}_k))$. We then apply acceptance-rejection sampling with the reference density proportional to $\frac{1}{2\pi T} \sum_{k=0}^{m-1} e^{-\frac{|x_0-\widetilde{T}_{2k}y|^2}{2T}}$ and accept the sample $y$ under the condition
\begin{equation}
U < \left(\sum_{k=0}^{2m-1} (-1)^k e^{-\frac{|x_0-\widetilde{T}_{k}y|^2}{2T}}\mathds{1}_{y \in \mathcal{V}}dy \right) \left(\sum_{k=0}^{m-1} e^{-\frac{|x_0-\widetilde{T}_{2k}y|^2}{2T}} \right)^{-1} ,
\end{equation}
with $U \sim \mathcal{U}([0,1])$.

\subsection{Algorithm for the simulation of the stopped Brownian motion: General case}
\label{algorithm_for_stopped}

In this subsection, we give a recursive algorithm to simulate the final value $W_{T\wedge \tau}$ of the stopped process on the wedge $\mathcal{D}$, for any angle $\alpha \in (0,2\pi)$. This algorithm will be used for the simulation algorithm of the reflected Brownian motion, but it also has its own interest as it can be used as itself for the simulation of the stopped Brownian motion. In what follows, we denote $\mathcal{F}_n$ the sigma algebra generated by the algorithm until the step $n$, i.e. $\mathcal{F}_n = \sigma( \tau_1, y_1, \ldots, \tau_n, y_n)$, where $\tau_1, \ \ldots , \ \tau_n$ and $y_1, \ \ldots, \ y_n$ are random variables generated by the algorithm. We also denote by $\mathbb{P}_n$ and $\mathbb{E}_n$ the conditional probability and expectation with respect to $\mathcal{F}_n$.

First, we propose an algorithm for the case where the angle is $\pi/m$. At the step $n$ of the algorithm we will define a wedge $\mathcal{V}_n \subset \mathcal{D}$ of angle $\pi/m$ for some $m \in \mathbb{N}$ and then simulate the process stopped on this wedge. Define $\Theta := \max \left( \{ \pi/m, \ m \in \mathbb{N} \} \cap (0, \alpha] \right)$. Note that since $\alpha \in (0,2\pi)$, we have $\alpha/2 < \Theta \le \alpha $.

\medskip

\underline{\it Algorithm I:}
Start at the point $x_0 = (r_0\cos(\theta_0),r_0\sin(\theta_0)) =: y_0$ at time $T_0 = 0$. The algorithm follows these steps for $n\in\mathbb{N}$:
\begin{enumerate}
	\item Given $ y_n = (r_n\cos(\theta_n),r_n\sin(\theta_n))\in \mathcal{V}_n $, define the angles:
\begin{align*}
	&\beta_{n+1}^- :=
	\left\lbrace \begin{array}{cc}
		0 & \text{ if } \theta_n \in \left[ 0, \frac{\Theta}{2} \right] \\
		\theta_n - \frac{\Theta}{2} & \text{ if } \theta_n \in \left[\frac{\Theta}{2}, \alpha - \frac{\Theta}{2} \right] \\
		\alpha - \Theta & \text{ if } \theta_n \in \left[ \alpha - \frac{\Theta}{2}, \alpha \right]
	\end{array} \right. \\
	&\beta_{n+1}^+ :=  \beta_{n+1}^- + \Theta.
\end{align*}
	By the definition of $ \Theta $, we have  $\Theta \le \alpha$ and therefore $0 \le \Theta/2 \le \alpha - \Theta/2 \le \alpha$.

	\item Consider the wedge $\mathcal{V}_{n+1}=\langle \beta_{n+1}^-,\beta_{n+1}^+\rangle $ of angle $\Theta$, which satisfies $\mathcal{V}_{n+1} \subset \mathcal{D}$ and $y_n=(r_n\cos(\theta_n),r_n\sin(\theta_n))$ $\in \mathcal{V}_{n+1}$.

	\item Simulate on which one of the two sets $\partial \mathcal{V}_{n+1}^+$ or $\partial \mathcal{V}_{n+1}^-$, the process $(W_t)_{t \ge T_n}$ starting from $y_n$, at time $T_n$, exits the wedge  $ \mathcal{V}_{n+1}$ and define $\theta_{n+1} := \beta_{n+1}^\pm$ according to the simulation result. This is done using a Bernoulli variable and the formula \eqref{eq:which_frontier}.
%
	\item Simulate $y_{n+1} \in \partial \mathcal{V}_{n+1}$, which is the value of $W$ when it reaches $\partial \mathcal{V}_{n+1}$ for the first time after starting at $y_n$ at time $T_n$, and knowing which one of the two events $y_{n+1}\in \partial \mathcal{V}_{n+1}^\pm$ has occurred, using formula \eqref{eq:metzler_radius}. Denote the exit point as
	$$ y_{n+1} = (r_{n+1}\cos(\theta_{n+1}), r_{n+1}\sin(\theta_{n+1})) .$$

	\item Simulate $\tau_{n+1}$, the time for the process $W$ starting at $y_n=(r_n\cos(\theta_n),r_n\sin(\theta_n))$ at time $T_n$ to reach $\partial \mathcal{V}_{n+1}$, knowing the event $W_{T_n+\tau_{n+1}} = y_{n+1}$. This can be done using the formula \eqref{simulation_tau} and the acceptance-rejection procedure described after it. Define
	$$ T_{n+1} := T_n + \tau_{n+1},$$
	so that $ W_{T_{n+1}} = y_{n+1}$.
\end{enumerate}

If $T_n + \tau_{n+1} < T$ then the algorithm iterates. This algorithm stops under one of these two conditions:
\begin{itemize}
	\item Condition 1:  $T_n + \tau_{n+1} \geq  T$. Then simulate the final value $W_T$ knowing that $W_{T_n} = y_n$ and that for $t\in [T_n, T]$, $W_t \in \mathcal{V}_n$. We do this simulation conditionally to the event $T_n + \tau_{n+1} > T$ and ``forget`` the exact simulated value of $\tau_{n+1}$ as well as the value of $W_{T_n+\tau_{n+1}}$. This is justified in Proposition \ref{prop:forget_step}, using $X := (W_{T \wedge T_{n+1}}, T_{n+1}\mathds{1}_{T_{n+1}<T})_{|\mathscr{F}_n}$, $Y := (W_{T_{n+1}}, T_{n+1})_{|\mathscr{F}_n}$ and $A = \mathbb{R}^2 \times [0,T]$. We simulate the value of  $W_T$ using \eqref{eq:alpha_pi_m_killed_simulation} and the acceptance-rejection procedure described after it.

	\item Condition 2:  $\theta_{n+1} = 0$ or $\theta_{n+1} = \alpha$. Then we obtain $\tau:=T_{n+1} < T$, i.e. the Brownian motion reaches the wedge $\mathcal{D}$ before the time $T$ and the result of the simulation is
$$ W_{T \wedge \tau} = (r_{n+1}\cos(\theta_{n+1}),r_{n+1}\sin(\theta_{n+1})) .$$
\end{itemize}
An illustration of this algorithm is given in Figure \ref{fig:algorithm_stopped}. Next, we prove that this algorithm can be carried out in a finite number of steps. 


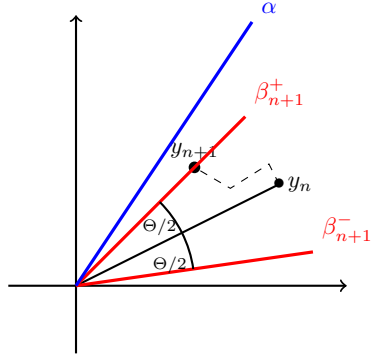
\begin{figure}
\centering
\begin{tikzpicture}
	[thick,scale=0.9, every node/.style={scale=0.9}]
\coordinate (beta_minus) at (3.5,0.5);
\coordinate (beta_plus) at (2.5,2.5);
\coordinate (alpha) at (2.6,3.9);
\coordinate (z_n) at ($(beta_minus)!0.5!(beta_plus)$);
\coordinate (z_nn) at ($(0,0)!0.7!(beta_plus)$);
\coordinate (origin) at (0,0);

\draw[thick,->] (0,-1) -- (0,4);
\draw[thick,->] (-1,0) -- (4,0);

\filldraw (z_nn) circle (2pt) node[anchor = south] {$y_{n+1}$};
\node at (z_n) {\textbullet};
\draw (0,0) -- (z_n) node[anchor = west] {$y_n$};
\draw[very thick,red] (0,0) -- (beta_minus) node[anchor=south west] {$\beta_{n+1}^-$};
\draw[very thick,red] (0,0) -- (beta_plus) node[anchor=south west] {$\beta_{n+1}^+$};
\draw[very thick,blue] (0,0) -- (alpha) node[anchor =south west] {$\alpha$};

\coordinate (w_1) at ($(z_n)!0.3!(beta_plus)$);
\coordinate (w_2) at ($(w_1)!0.2!(origin)$);
\draw[thin,dashed] (z_n) -- (w_1) -- (w_2) -- (z_nn);

\draw pic[draw,angle radius=1.75cm,"${\scriptstyle\Theta/2}$" anchor=west] {angle = beta_minus--origin--z_n};
\draw pic[draw,angle radius=1.75cm,"${\scriptstyle\Theta/2}$" anchor=south west] {angle = z_n--origin--beta_plus};

\end{tikzpicture}

\caption{Example of the domains of simulation at step $n+1$.}
\label{fig:algorithm_stopped}
\end{figure}

\smallskip

\begin{proposition}
\label{stopped_finite_time}
Algorithm I ends in finite time, i.e. the number of required iterations to finish the algorithm, denoted by $N$, is such that $N < \infty$ almost surely. More precisely, for all $K \in \mathbb{N}$, $\mathbb{P}^{x_0}(N \ge K)  \le 2^{-\left\lfloor K/2 \right\rfloor} $. 
\end{proposition}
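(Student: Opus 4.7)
The plan is to prove a uniform three-step contraction for the angle chain $(\theta_n)_{n \ge 0}$ and then iterate via the Markov property. Focusing only on Condition~2 termination (Condition~1 only strengthens the bound) the sequence $(\theta_n)$ is a time-homogeneous Markov chain on $(0,\alpha)$ with absorbing boundary $\{0,\alpha\}$. I aim to show
\[
q_3 \;:=\; \sup_{\theta \in (0,\alpha)} \mathbb{P}\bigl(\text{no termination in the next three iterations} \mid \theta_n = \theta\bigr) \;\le\; \frac{1}{2}.
\]
Combined with $\mathbb{P}(N \ge K) \le \mathbb{P}(\text{no termination in } K-1 \text{ steps})$ and the sub-multiplicativity $q_{m+n} \le q_m q_n$ given by the Markov property, this will yield $\mathbb{P}(N \ge K) \le 2^{-\lfloor K/3 \rfloor}$ and in particular $N < \infty$ a.s.

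To bound $q_3$, I would first partition $(0,\alpha)$ according to step~1 of Algorithm~I. Region~A is $(0,\Theta/2]$, where $\mathcal{V}_{n+1} = \langle 0,\Theta\rangle$ is attached to $\partial\mathcal{D}^-$ and by \eqref{eq:which_frontier} the next step terminates with probability $1-\theta/\Theta \ge 1/2$; Region~C is the symmetric $[\alpha-\Theta/2,\alpha)$; Region~B is $(\Theta/2,\alpha-\Theta/2)$, where $\mathcal{V}_{n+1}$ is centered at $\theta$, the symmetric Bernoulli sends $\theta_{n+1}$ to $\theta \pm \Theta/2$ with probability $1/2$ each, and no Condition~2 termination can occur at that step. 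For $\theta \in \mathrm{A}\cup\mathrm{C}$ the one-step bound already gives the contraction, so the real work concerns Region~B.

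In Region~B the key ingredient is the inequality $\alpha < 2\Theta$, which follows from the maximality of $\Theta = \pi/m$: by construction $\pi/(m-1) > \alpha$, hence $\alpha/\Theta < m/(m-1) \le 2$. This forces $\alpha - \Theta < \Theta$ and consequently prevents both transitions $\theta \pm \Theta/2$ from staying in Region~B simultaneously. I would further split Region~B into a middle sub-region $[\alpha-\Theta,\Theta]$ (from which both transitions land in $\mathrm{A}\cup\mathrm{C}$, so only two steps are needed to reach a terminating step) and, when $\alpha > 3\Theta/2$, lower $(\Theta/2,\alpha-\Theta)$ and upper $(\Theta,\alpha-\Theta/2)$ sub-regions, in which one transition exits Region~B while the other shifts to the opposite sub-region.

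The main obstacle is the precise enumeration in the lower and upper sub-regions, where the chain can briefly persist in Region~B. I would trace the finitely many three-step trajectories, compute for each the termination probability using $1-\theta/\Theta$ from Region~A (and its symmetric counterpart from Region~C) and sum. The resulting expression turns out to be linear in the starting angle $\theta$ and the ratio $\alpha/\Theta$, and the inequality $\alpha/\Theta < 2$ is exactly what is required to keep the non-termination probability at most $1/2$. The symmetric choice of $(\beta_{n+1}^-,\beta_{n+1}^+)$ in the middle case of step~1 of the algorithm --- alluded to in the remark preceding the proposition --- plays an essential role here: it produces an unbiased random walk on Region~B, without which the three-step contraction would not close.
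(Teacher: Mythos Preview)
Your proposal is correct and follows the paper's own three-step contraction strategy via the Markov property; the paper's dichotomy ``$\mathcal{V}_n$ intersects / does not intersect $\partial\mathcal{D}$'' is exactly your $\mathrm{A}\cup\mathrm{C}$ versus $\mathrm{B}$ split. Your finer subdivision of Region~$\mathrm{B}$ into middle, lower and upper parts is in fact not mere pedantry: the paper's assertion that ``if $\mathcal{V}_n$ does not intersect the boundary then $\mathcal{V}_{n+1}$ does'' fails whenever $3\Theta/2<\alpha<2\Theta$ (for instance $\alpha=0.9\pi$, $\Theta=\pi/2$, $\theta_{n-1}=0.3\pi$ gives $\theta_n=0.55\pi$ still in~$\mathrm{B}$), and your lower/upper sub-region enumeration, driven by the inequality $\alpha<2\Theta$, is precisely what closes the argument in that range. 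One minor remark: your derivation of $\alpha/\Theta<m/(m-1)\le 2$ tacitly assumes $m\ge 2$; when $m=1$ one has $\Theta=\pi$ and $\alpha\in[\pi,2\pi)$, so $\alpha/\Theta<2$ holds directly.
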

\begin{proof}
Fix $K \in \mathbb{N}$. For a wedge $\mathcal{V}_n$, $n \le N$ in Algorithm I, we say that ``$\mathcal{V}_n$ intersects the boundary of $ \mathcal{D} $" if $\beta_{n}^+ = \alpha$ or $\beta_{n}^- = 0$. For $n \le N$, we have two possible cases:
\begin{itemize}
	\item $\mathcal{V}_n$ does not intersect the boundary of $ \mathcal{D} $. Then, using the fact that $\Theta > \frac{\alpha}{2}$ and the definition of $\beta_{n+1}^\pm$, $\mathcal{V}_{n+1}$ intersects the boundary of $ \mathcal{D} $, and $\theta_n$ is closer to $\beta_{n+1}^\pm$ than $\beta_{n+1}^\mp$, where here the sign $\pm$ is such that $\partial \mathcal{V}_{n+1}^\pm \cap \partial \mathcal{D} \ne \lbrace 0 \rbrace$. So using \eqref{eq:which_frontier}, $ \mathbb{P}_n(y_{n+1} \notin \partial \mathcal{D}) \le 1/2$.
	\item $\mathcal{V}_n$ intersects the boundary of $ \mathcal{D} $. Then if $\theta_{n+1} \notin \lbrace 0, \alpha \rbrace$, $\mathcal{V}_{n+1}$ does not intersect the boundary of $ \mathcal{D} $ and by the same reasoning as above $\mathbb{P}_n(y_{n+2} \notin \partial \mathcal{D}) \le 1/2$.
\end{itemize}
Then, by tower property of conditional expectations,  we have:
$$ \mathbb{P}^{y_0}(N \ge K) \le \mathbb{P}^{y_0}(y_1 \notin \partial \mathcal{D},\ldots,y_K \notin \partial \mathcal{D}) \le 2^{-\left\lfloor \frac{K}{2} \right\rfloor} .$$
So that:
$$ \mathbb{P}^{y_0}(N = \infty) = \underset{K \rightarrow \infty}{\lim} \mathbb{P}^{y_0}(N \ge K) = 0 .$$
\end{proof}


\subsection{Algorithm for the simulation of the reflected Brownian motion}
\label{algorithm_for_reflected}

We now state an algorithm to simulate the final value of the reflected Brownian motion $X_T$ in a wedge $\mathcal{D}=\langle\alpha \rangle$ of any angle $\alpha \in (0,2\pi)$.

\underline{\it Algorithm II:}
Choose $\Theta := \max \left( \{ \pi/m, \ m \in \mathbb{N} \} \cap (0, \alpha] \right)$ as before. Start at the point $x_0 = (r_0 \cos(\theta_0), r_0 \sin(\theta_0))$ $=: y_0$ at time $T_0 = 0$. In general, suppose that for $n\in\mathbb{N}$ the point $ y_n=(r_n\cos(\theta_n), r_n \sin(\theta_n)) $ has already been simulated.

\begin{enumerate}

	\item Define the angles:
	\begin{align*}
	\beta_{n+1}^- & := \theta_n - \frac{\Theta}{2}, \\
	\beta_{n+1}^+ & := \theta_n + \frac{\Theta}{2} = \beta_{n+1}^- + \Theta .
	\end{align*}

	\item Consider the wedge $\mathcal{V}_{n+1}:=\langle \beta_{n+1}^-, \beta_{n+1}^+\rangle$. Note that although $y_n \in \mathcal{V}_{n+1}$, we do not necessarily have that $\mathcal{V}_{n+1} \subset \mathcal{D}$.

	\item Simulate the random variables $\tau_{n+1}$ and $z_{n+1}$, which are respectively the time and the final point of a Brownian motion $Z=(Z_t)_{t\geq 0}$ starting at $y_n$ at time $0$ and stopped on the wedge $\mathcal{V}_{n+1}$ using the steps 3-5 of Algorithm I in Section \ref{algorithm_for_stopped}. Note that, since $\theta_n$ is in the middle of the wedge $\mathcal{V}_{n+1}$, we have by symmetry $ \mathbb{P}^{z_n}(Z_{\tau_{n+1}} \in \partial \mathcal{V}_{n+1}^\pm) = 1/2$.

	\item If $T_n + \tau_{n+1} < T$, then we define $r_{n+1}$ and $\widetilde{\theta}_{n+1}$ to be respectively the radius and the angle of $z_{n+1}$. Since it is possible that $z_{n+1} \notin \mathcal{D}$, we define $\theta_{n+1}$ as follows:
	$$ \theta_{n+1} = \left\lbrace \begin{array}{ll}
		\widetilde{\theta}_{n+1} & \text{ if } \widetilde{\theta}_{n+1} \in [0,\alpha], \\
		- \widetilde{\theta}_{n+1} & \text{ if } \widetilde{\theta}_{n+1} = \beta_{n+1}^- \text{ and } \beta_{n+1}^- < 0, \\
		2\alpha - \widetilde{\theta}_{n+1} & \text{ if } \widetilde{\theta}_{n+1} = \beta_{n+1}^+ \text{ and } \beta_{n+1}^+ > \alpha.
	\end{array} \right.$$
	And then we define $y_{n+1} := (r_{n+1} \cos(\theta_{n+1}), r_{n+1} \sin(\theta_{n+1}))$, so that $y_{n+1} \in \mathcal{D}$. Note that if $z_{n+1} \notin \mathcal{D}$, then $y_{n+1}$ is the reflection of $z_{n+1}$ with respect to the line $\lbrace \theta = 0 \rbrace$ in the second case or $\lbrace \theta = \alpha \rbrace$ in the third case in the definition of $ \theta_{n+1} $. Then define the time:
	$$ T_{n+1} = T_n + \tau_{n+1}$$
and the algorithm iterates.

	\item If $T_n + \tau_{n+1} > T$, we simulate $z_{n+1} \in \mathcal{V}_{n+1}$ as the value at time $T - T_n$ of a Brownian motion starting at $y_n$ and conditionally to the fact that it stays in the wedge $\mathcal{V}_{n+1}$ in the time interval $[0,T-T_n]$. This can be done using the acceptance-rejection method given in \eqref{eq:alpha_pi_m_killed_simulation} and what follows based on Proposition \ref{prop:forget_step} as it was done in the termination Condition 1 in Algorithm I in Section \ref{algorithm_for_stopped}. Then we define $y_{n+1}$ in function of $z_{n+1}$ as in step 4.
	Finally, we stop the algorithm, and the resulting value of the simulation is $y_{n+1}$.
\end{enumerate}
An illustration of this algorithm is given in Figure \ref{fig:algorithm_reflected}.


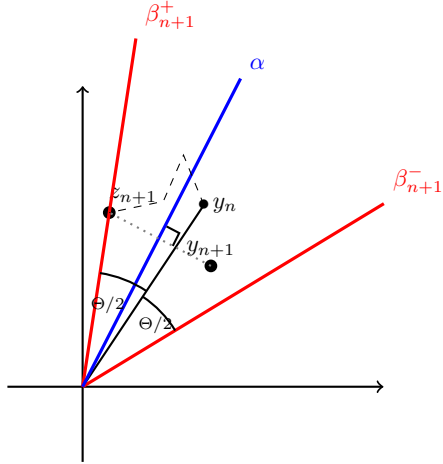
\begin{figure}
\centering
\begin{tikzpicture}[thick,scale=1., every node/.style={scale=0.9}]
\def \theta{50}
\coordinate (origin) at (0,0);
\coordinate (alpha) at (2.1,4.1);
\coordinate (theta_n) at (2.6,3.9);
\coordinate (z_n) at ($(0,0)!0.62!(theta_n)$);
\coordinate (beta_minus) at ([rotate around={-\theta*0.5:(origin)}]theta_n);
\coordinate (beta_plus) at ([rotate around={\theta*0.5:(origin)}]theta_n);
\coordinate (tilde_z_nn) at ($(origin)!0.5!(beta_plus)$);
\coordinate (z_nn) at ([rotate around={-38:(origin)}]tilde_z_nn);
\coordinate (intersection) at ($(tilde_z_nn)!0.5!(z_nn)$);

\draw[thick,->] (0,-1) -- (0,4);
\draw[thick,->] (-1,0) -- (4,0);

\filldraw (tilde_z_nn) circle (2pt) node[anchor = south] {\hspace{0.7cm}$z_{n+1}$};
\filldraw (z_nn) circle (2pt) node[anchor = south] {$y_{n+1}$};
\node at (z_n) {\textbullet};
\draw (0,0) -- (z_n) node[anchor = west] {$y_n$};
\draw[very thick,red] (0,0) -- (beta_minus) node[anchor=south west] {$\beta_{n+1}^-$};
\draw[very thick,red] (0,0) -- (beta_plus) node[anchor=south west] {$\beta_{n+1}^+$};
\draw[very thick,blue] (0,0) -- (alpha) node[anchor =south west] {$\alpha$};

\coordinate (w_1) at ($(z_n)!0.3!(beta_plus)$);
\coordinate (w_2) at ($(w_1)!0.2!(origin)$);
\draw[thin,dashed] (z_n) -- (w_1) -- (w_2) -- (tilde_z_nn);

\draw pic[draw,angle radius=1.6cm,"${\scriptstyle\Theta/2}$" anchor=south west] {angle = beta_minus--origin--z_n};
\draw pic[draw,angle radius=1.7cm,"${\scriptstyle\Theta/2}$" anchor=south] {angle = z_n--origin--beta_plus};

\draw[gray, dotted] (z_nn) -- (tilde_z_nn);
\draw ($(intersection)!0.25!(z_nn)$) -- +($(intersection)!0.08!(alpha)-(intersection)$) -- ($(intersection)!0.08!(alpha)$);

\end{tikzpicture}

\caption{Example of domains of simulation at the step $n+1$ in the reflected case. }
\label{fig:algorithm_reflected}
\end{figure}

\begin{proposition}
\label{finite_time_reflected}
Define $ N := \inf \lbrace n \in \mathbb{N}, \ \tau_1 + \ldots + \tau_n > T \rbrace$. Then $ N<\infty $ almost surely. That is, the algorithm terminates in finite time.
\end{proposition}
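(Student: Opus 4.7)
The goal is to show that $\sum_{n\ge 1}\tau_n=+\infty$ almost surely, which by the definition of $N$ forces $N<\infty$ a.s. The argument splits into three stages: an i.i.d.\ scaling reduction, a one-step logarithmic drift computation via conformal invariance, and a conditional Borel--Cantelli argument.

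First, I would exploit the geometry of Algorithm II: by construction $y_n$ sits exactly on the bisector of the wedge $\mathcal{V}_{n+1}$, and the possible reflection in step 4 preserves the radius (so $r_{n+1}=|z_{n+1}|$). Combining this with the scaling and rotational invariance of two-dimensional Brownian motion yields $\tau_{n+1}=r_n^{2}\,S_{n+1}$ and $r_{n+1}=r_n R_{n+1}$, where the pairs $(S_n,R_n)_{n\ge 1}$ are i.i.d., distributed as the exit time and exit radius of a standard planar Brownian motion from the reference wedge of opening $\Theta$ started at unit distance on the bisector; by the strong Markov property $(S_{n+1},R_{n+1})$ is also independent of $\mathcal{F}_n$. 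In particular, $\log r_n=\log r_0+\sum_{i=1}^{n}\log R_i$ is a one-dimensional random walk with i.i.d.\ increments.

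Next, I would establish $\mathbb{E}[\log R_1]=0$ via the conformal map $\phi(z)=z^{\pi/\Theta}$, which sends the reference wedge (of angle $\Theta\le\pi$, by the choice of $\Theta$) onto the upper half-plane $\mathbb{H}$ and the bisector unit vector to $i$. Conformal invariance of planar Brownian motion identifies the image $\phi(W_{\tau_1})$ with the exit point of a Brownian motion from $\mathbb{H}$ started at $i$, which is a standard Cauchy variable $C$. Hence $R_1=|\phi^{-1}(C)|=|C|^{\Theta/\pi}$, and the elementary identity $\int_0^\infty (\log x)/(1+x^{2})\,dx=0$ (obtained via $x\mapsto 1/x$) gives $\mathbb{E}[\log R_1]=(\Theta/\pi)\,\mathbb{E}[\log|C|]=0$; the same computation shows that $\log R_1$ is square-integrable. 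Thus $(\log r_n)_n$ is a mean-zero random walk on $\mathbb{R}$ with finite variance, hence recurrent by Chung--Fuchs, and the set $\{n: r_n\ge 1\}$ is a.s.\ infinite.

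To conclude, I would pick $\varepsilon>0$ small enough that $p:=\mathbb{P}(S_1\ge\varepsilon)>0$, which is possible since $S_1>0$ a.s. On the event $\{r_{n-1}\ge 1\}\in\mathcal{F}_{n-1}$ one has $\tau_n\ge S_n$, and since $S_n$ is independent of $\mathcal{F}_{n-1}$,
\[
\mathbb{P}\bigl(\tau_n\ge\varepsilon\,\big|\,\mathcal{F}_{n-1}\bigr)\;\ge\;p\,\mathds{1}_{\{r_{n-1}\ge 1\}}.
\]
Summing over $n$, the right-hand side is a.s.\ infinite by the recurrence step, and the conditional Borel--Cantelli lemma yields $\tau_n\ge\varepsilon$ for infinitely many $n$ a.s., so $\sum_n\tau_n=+\infty$ a.s. The main technical obstacle is the identity $\mathbb{E}[\log R_1]=0$: since $\log|\cdot|$ is unbounded on the wedge, a direct optional stopping argument on the harmonic function $\log|W_t|$ requires truncation, and it is cleanest to read the identity off the explicit Cauchy law of the image exit point on $\mathbb{H}$.
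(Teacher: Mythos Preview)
Your proof is correct and takes a genuinely different route from the paper's argument. The paper observes that the sequence $(\tau_i)$ has the same law as the successive times at which the angle $\theta(B_t)$ of a single planar Brownian motion changes by $\pm\Theta/2$; if $\mathcal{T}:=\sum_i\tau_i<\infty$ then one can extract two sequences of times $t_n^1,t_n^2\uparrow\mathcal{T}$ along which the angles differ by at least $\Theta/2$, and continuity of $\theta(B_t)$ away from the origin forces $B_{\mathcal{T}}=0$, which has probability zero. This is a two-line continuity argument with no computation.

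Your approach instead exhibits the multiplicative random-walk structure $r_n=r_0\prod_{i\le n}R_i$, identifies $\mathbb{E}[\log R_1]=0$ via the conformal map to the half-plane and the symmetry of the Cauchy law, and then invokes recurrence plus conditional Borel--Cantelli. This is considerably heavier, but what it buys is quantitative: the i.i.d.\ decomposition $\tau_{n+1}=r_n^{2}S_{n+1}$ and the mean-zero random walk $(\log r_n)$ are exactly the ingredients the paper later needs (via the skew-product representation) in Section~\ref{sec:6.2} to show that $\mathbb{E}[N^a]<\infty$ only for $a<1/2$. So your argument, while overkill for mere a.s.\ finiteness, anticipates the finer complexity analysis. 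A minor remark: the identity $\mathbb{E}[\log R_1]=0$ can also be read off directly as the optional-stopping statement for the local martingale $\log|W_t|$ (with the truncation you allude to), or from the explicit exit-radius formula \eqref{eq:metzler_radius}, but the Cauchy computation is the cleanest.
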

\begin{proof}
The stopping times $(\tau_i)_i$ follow the same law as $(\widetilde{\tau}_i)$, defined by $\widetilde{\tau}_0 = 0$ and:
\begin{align}
\label{eq:definition_N:2}\widetilde{\tau}_{i+1} := \inf \left \lbrace t>0 : \ |\theta(B_{t+\widetilde{\tau_i}})-\theta(B_{\widetilde{\tau}_i})| \ge \frac{\pi}{2m} \right\rbrace,
\end{align}
where $m = \pi/\Theta\in\mathbb{N}$, $B=(B_t)_{t\geq 0}$ is a standard two-dimensional Brownian motion and $\theta(B_t)$ denotes the angle of the process\footnote{We choose the angle of the process $(\theta(B_t))_{t\geq 0}$ so that it is continuous with respect to $t$.}  $B$. Then:
$$ \mathbb{P}^{x_0}\left( \sum_{i=1}^\infty \tau_i < \infty \right) = \mathbb{P}^{x_0}\left( \sum_{i=1}^\infty \widetilde{\tau}_i < \infty \right).$$

But, if $\mathcal{T} := \sum_{i=1}^\infty \widetilde{\tau}_i < \infty$, then there exist two random sequences, $(t^1_n)$ and $(t^2_n)$, increasing and converging to $\mathcal{T}$, such that $\theta(B_{t^i_n}) = k_n^i \pi/(2m)+\theta_0$ for all $n \in \mathbb{N}$ and $i \in \lbrace 1,2 \rbrace$, and where $k_n^i \in \mathbb{Z}$ has the same parity as $i$, so that the difference between $\theta(B_{t_n^1})$ and $\theta(B_{t_{n'}^1})$ is at least $\pi/(2m)$ for all $ n,n'\in\mathbb{N} $. Taking $n \rightarrow \infty$, we have necessarily that $B_\mathcal{T}=0$, which occurs with probability $0$ on any closed time interval.
\end{proof}

\section{Folding number in a wedge and complexity 
}
\label{section:complexity}


In this section we study the complexity of Algorithms I and II in Sections \ref{algorithm_for_stopped} and \ref{algorithm_for_reflected} in separate cases. Recall that $N$ denotes the number of iterations that an algorithm requires to finish one simulation, i.e.
\begin{align*}
N := \inf \lbrace n \in \mathbb{N} : \ \tau_1 + \ldots + \tau_n > T \rbrace.
\end{align*}
For Algorithm I, $N$ is the number of times we touch the boundaries of the corresponding sets $ \mathcal{V}_n $, $ n\in\mathbb{N} $, before touching one of the boundaries of $ \mathcal{D} $ or reaching time $ T $.
For Algorithm II, $N$ is the number of times that a Brownian motion reflected in a wedge $\mathcal{V}$ of angle $\pi/(2m)$ goes from one boundary $\partial\mathcal{V}^\pm$ to another $\partial\mathcal{V}^\mp$ until time $ T $.  
For this reason, we may call $ N $ the number of folds for the simulation of the reflected Brownian motion. 

In this section, we give theoretical properties and bounds for $N$ for each algorithm separately.

\subsection{Majoration of $N$ for the simulation algorithm of the stopped Brownian motion}

For Algorithm I, by Proposition \ref{stopped_finite_time}, we have for all $K \in \mathbb{N}$, $\mathbb{P}^{x_0}(N \ge K) \le 2^{-\left\lfloor K/2 \right\rfloor}$, so
$$\mathbb{E}^{x_0}[N] = \sum_{K=1}^\infty \mathbb{P}^{x_0}(N \ge K) \le \sum_{K=1}^\infty 2^{-\left\lfloor \frac{K}{2} \right\rfloor} = 3.$$
So Algorithm I ends in finite time almost surely and its complexity is finite in expectation.

\subsection{Majoration of the number of folds of the Brownian motion in a wedge}
\label{sec:6.2}

We now investigate the complexity for Algorithm II.
We decompose the standard two-dimensional Brownian motion $W=(W_t)_{t\geq 0}$ in polar coordinates and introduce the notations that will be used in this section and the next one as follows.
The successive stopping times $(\tau_i)_i$ which appear in Algorithm II, have the same law as $(\tilde{\tau}_i)_i$, defined  as in the proof of Proposition \ref{finite_time_reflected}.
Then we have
$$ N\stackrel{\mathscr{L}}{=}\inf \lbrace n \in \mathbb{N} : \ \tilde{\tau}_1 + \ldots + \tilde{\tau}_n > T \rbrace.$$

Moreover, using the skew-product representation of the Brownian motion (see \cite[page 194]{RY}):
 \begin{align}
 \label{eq:sr}
 B_t = R_t U_{F(t)},
 \end{align}
where $(R_t) _{t\geq 0}= (|B_t|)_{t\geq 0}$ is a Bessel process of dimension 2,
 $(U_t)_{t \geq 0}$ is a Brownian motion on $\mathbb{S}^1 \subset \mathbb{R}^2$ and $F$ is defined as
\begin{equation}
F(t) = \int_0^t \frac{ds}{R_s^2},
\end{equation}
which is a strictly increasing process almost surely.
Moreover, the processes $(R_t)_{t \geq 0}$ and $(U_t)_{t \geq 0}$ are independent. If we let $(\theta(U_t))_{t \geq 0}$ be the angle of the process $(U_t)_{t \geq 0}$ then $(\theta(U_t))_{t \geq 0}$ is a standard one-dimensional Brownian motion. 

Define the stopping times $(s_i)_i$ by $s_0 = 0$ and
$$ s_{i+1} = \inf \lbrace t>0 : \ |\theta(U_{t+s_i}) - \theta(U_{s_i})| \ge \pi/(2m) \rbrace.$$
Then, the random variables $s_i$ are i.i.d. and have the law of the time for one-dimensional Brownian motion starting at $0$ to reach the double barrier $\pm \pi/(2m)$. Then using \eqref{eq:definition_N:2} we have that for all $K \in \mathbb{N}$,
$$ \sum_{i=1}^K s_i \overset{\mathscr{L}}{=} F\left( \sum_{i=1}^K \tilde{\tau}_i \right).$$
So the estimation of $ N $ can be simplified as 
 \begin{align}
 \label{eq:nk}
\mathbb{P}^{x_0}(N \ge K) = \mathbb{P}^{x_0}\left(\sum_{i=1}^K \tilde{\tau}_i \le T\right) = \mathbb{P}^{x_0}\left(\sum_{i=1}^K s_i \le F(T)\right).
\end{align}
Note that $F$ is a random change of time, independent of $(s_i)_i$. Furthermore, the Laplace transform of $ s_i $ is explicitly given in \cite[Section 3.5, Proposition 3.5.1.3]{jeanblanc2010} as 
\begin{align*}
	\mathbb{E}[e^{-\lambda s_1}]=\frac{2}{e^{\frac{\sqrt{2}\pi\sqrt{\lambda}}{2m}}+e^{-\frac{\sqrt{2}\pi\sqrt{\lambda}}{2m}}}.
\end{align*} In particular, we have:
$$ (\mathbb{E}[s_i],\text{Var}(s_i)) = \left(\frac{\pi^2}{4m^2},\frac{2}{3}\left(\frac{\pi}{2m}\right)^4 \right)=: (\mu,\sigma^2).$$
\begin{proposition}
	\label{prop:6}
	Let $0 < a < \frac{1}{2}$. Then we have
	$ \mathbb{E}^{x_0}[N^a] <\infty.$ On the other hand, if $ a\geq 1/2 $ then $ \mathbb{E}^{x_0}[N^a] =\infty.$
	%
\end{proposition}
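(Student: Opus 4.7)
The plan is to reduce the estimation of $\mathbb{E}^{x_0}[N^a]$ to moments of $F(T)$ and then analyse $F(T)$ via its Laplace transform. Using \eqref{eq:nk} and the independence of $(s_i)_i$ and $F(T)$ coming from the skew-product representation \eqref{eq:sr} --- the $s_i$ are built from the angular Brownian motion $\theta(U_\cdot)$, which is independent of the radial process $R$ --- I first establish two-sided bounds
\[
(1-Ce^{-cK})\,\mathbb{P}(F(T) \ge 2K\mu) \;\le\; \mathbb{P}^{x_0}(N \ge K) \;\le\; Ce^{-cK} + \mathbb{P}(F(T) \ge K\mu/2).
\]
The exponential terms come from a Cram\'er-type concentration bound applied to $S_K=s_1+\cdots+s_K$: each $s_i$ is dominated by the exit time of a one-dimensional Brownian motion from $[-\pi/(2m),\pi/(2m)]$ and hence has all exponential moments. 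Summing these bounds weighted by $K^{a-1}$ then shows that $\mathbb{E}^{x_0}[N^a]$ and $\mathbb{E}^{x_0}[F(T)^a]$ are simultaneously finite or infinite.

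Next, I would compute the Laplace transform of $F(T)$ via the Hartman--Watson identity for the two-dimensional Bessel process: conditionally on $R_T=r$, $\mathbb{E}^{r_0}[\exp(-\lambda^2 F(T)/2)\mid R_T=r] = I_\lambda(rr_0/T)/I_0(rr_0/T)$. Integrating against the density of $R_T$ and using the series expansion of $I_\lambda$ expresses the unconditional transform in closed form as
\[
\mathbb{E}^{r_0}\!\left[\exp\!\left(-\tfrac{\lambda^2}{2}\,F(T)\right)\right] = e^{-z}\,z^{\lambda/2}\,\frac{\Gamma(\lambda/2+1)}{\Gamma(\lambda+1)}\,{}_1F_1\!\left(1+\tfrac{\lambda}{2};\,1+\lambda;\,z\right), \quad z:=\tfrac{r_0^2}{2T}.
\]
A Taylor expansion at $\lambda=0$, using $\partial_\lambda[(1+\lambda/2)_k/(1+\lambda)_k]|_{\lambda=0}=-H_k/2$ and the generating-function identity $\sum_{k\ge 1}H_k\,z^k/k!=e^{z}\,\mathrm{Ein}(z)$, combined with $\log z+\gamma-\mathrm{Ein}(z) = -E_1(z)$, produces the non-analytic expansion
\[
\mathbb{E}^{r_0}[\exp(-u\,F(T))] \;=\; 1 \;-\; E_1(z)\sqrt{u/2} \;+\; O(u), \qquad u\to 0^{+},
\]
where $E_1$ is the exponential integral, strictly positive on $(0,\infty)$. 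Karamata's Tauberian theorem (Feller, Vol.~II, Chap.~XIII) converts this square-root singularity into the tail asymptotic $\mathbb{P}(F(T)>t)\sim E_1(z)/\sqrt{2\pi t}$ as $t\to\infty$.

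To conclude, write $\mathbb{E}[F(T)^a]=a\int_0^{\infty} t^{a-1}\,\mathbb{P}(F(T)>t)\,dt$: the integrand is of order $t^{a-3/2}$ at infinity, so the integral converges iff $a<1/2$, while for $a>1/2$ the matching lower bound on the tail forces divergence. Combined with the first step, this yields the dichotomy asserted for $\mathbb{E}^{x_0}[N^a]$. The main obstacle is the analytic step of extracting the $\sqrt{u}$-singularity of the Laplace transform at the origin: although the Hartman--Watson identity is classical, the non-analytic behaviour arises from a delicate cancellation between the three factors $z^{\lambda/2}$, the Gamma quotient and the ${}_1F_1$ series, and the surviving linear-in-$\lambda$ coefficient must be carefully identified as $-\tfrac12 E_1(z)$ through the $\mathrm{Ein}$-vs-$E_1$ identity. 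Once this is done, the Cram\'er concentration, the skew-product independence, and the Tauberian conversion are standard ingredients.
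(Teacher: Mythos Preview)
Your proposal is correct and follows essentially the same route as the paper: both reduce $\mathbb{P}(N\ge K)=\mathbb{P}(\sum_{i\le K}s_i\le F(T))$ to the tail of $F(T)$ via exponential concentration of $\sum_i s_i$ around $K\mu$, and then obtain $\mathbb{P}(F(T)>x)\sim C x^{-1/2}$ from the small-parameter expansion of the Laplace transform through Karamata's Tauberian theorem. The only cosmetic differences are that the paper derives the Laplace transform of $F(T)$ from a formula in \cite{jeanblanc2010} (Proposition~6.2.5.1) rather than via Hartman--Watson and ${}_1F_1$, and handles the lower bound with the CLT instead of a Cram\'er upper-deviation bound; your explicit reduction to the finiteness of $\mathbb{E}[F(T)^a]$ is a slightly cleaner packaging of the same argument.
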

Equation \eqref{eq:nk} together with the estimate in \eqref{tail_bessel_integral} give the result which is a consequence of the fact that the tails of $ F(T) $ are large because the probability that the Bessel process $ R $ is close to zero is large and therefore due to \eqref{eq:sr} the simulation may spend a large time close to the origin. 
\begin{proof}
	We have that for all $K \in  \mathbb{N} $ and $ \delta>1 $:
	\begin{align*}
		\mathbb{P}^{x_0}(N^a \ge K) 
		= &\int_0^{\frac{\mu K^{1/a}}{\delta}} \mathbb{P}^{x_0}\left( \sum_{i=1}^{\lceil K^{1/a} \rceil} s_i \le y \right) \mathbb{P}^{x_0}\left( F(T) \in dy \right) + \int_{\frac{\mu K^{1/a}}{\delta}}^\infty \mathbb{P}^{x_0}\left( \sum_{i=1}^{\lceil K^{1/a} \rceil} s_i \le y \right) \mathbb{P}^{x_0}\left( F(T) \in dy \right) \\
		=:& \mathcal{I}_1(K) + \mathcal{I}_2(K).
	\end{align*}
	
	For $ \mathcal{I}_1(K) $, we will use the fact that
	$$ 2 \exp\left(\frac{\pi^2}{4\delta m^2} \right) < \exp\left(\frac{\sqrt{2}\pi}{2m}\right) + \exp\left(-\frac{\sqrt{2}\pi}{2m}\right) $$
	for $ \delta$ large enough. Then, we obtain that $\mathcal{I}_1(K) $ decreases exponentially fast to zero. In fact, using Markov inequality
	\begin{align}
		I_1(K)\leq \mathbb{P}^{x_0}\left( \sum_{i=1}^{\lceil K^{1/a} \rceil} s_i \le \frac{\mu K^{1/a}}{\delta} \right)\leq
		e^{\frac{\mu K^{1/a}}{\delta}}
		\left(\mathbb{E}^{x_0}\left[e^{-s_1}\right]\right)^{\lceil K^{1/a} \rceil}\leq \left(\frac{2e^{\frac{\pi^2}{4\delta m^2}}}{e^{\frac{\sqrt{2}\pi}{2m}}+e^{-\frac{\sqrt{2}\pi}{2m}}}\right)^{\lceil K^{1/a} \rceil}.
	\end{align}
	Using \eqref{tail_bessel_integral}, we have:
	\begin{equation}
		I_2 (K)\le \mathbb{P}^{x_0}\left(F(T)\ge \frac{\mu K^{1/a}}{\delta} \right) \underset{K \rightarrow \infty}{\sim} C K^{-\frac{1}{2a}}, \ \ C = \frac{1}{\sqrt{2}\Gamma(1/2)} \left( \int_{\frac{r_0^2}{2T}}^\infty \frac{e^{-u}}{u}du \right) \left(\frac{\mu}{\delta }\right)^{-\frac{1}{2}} .
	\end{equation}
	%
	Since $0 < a <1/2$, the result follows.
	%
	
	Similarly, in the case $ a\geq 1/2 $, one has that 
	\begin{align*}
		\mathbb{P}^{x_0}(N^a \ge K) 
		\geq \int_{{2\mu K^{1/a}}}^\infty \mathbb{P}^{x_0}\left( \sum_{i=1}^{\lceil K^{1/a} \rceil} s_i \le y \right) \mathbb{P}^{x_0}\left( F(T) \in dy \right).
	\end{align*}
	For $ K $ large enough and $ y\geq  2\mu K^{1/a}$, we have $\mathbb{P}^{x_0}( \sum_{i=1}^{\lceil K^{1/a} \rceil} s_i \le y)\geq 1/2 $ by the Central Limit Theorem, and $ \mathbb{P}^{x_0}\left( F(T) \geq  {2\mu K^{1/a}} \right) \sim C K^{-1/(2a)}$ as $K \rightarrow \infty$ for some constant $ C>0 $ which implies the result in this case.
\end{proof}
From this result, we conclude that the average number of iterations of the algorithm in the reflected case is infinite.
A modification of the above algorithm with finite number of iterations in expectation is provided below.

\subsection{Proposition of modification of Algorithm II}
\label{algorithm_modified}

When the simulated radii $r_1, \ \ldots, \ r_n$ in Algorithm II of Section \ref{algorithm_for_reflected} become small, the process $(X_t)_{t\in [0,T]}$ goes from one boundary $\partial \mathcal{V}^{\pm}$ to the other boundary $\partial \mathcal{V}^{\mp}$ many times and the number of iterations  becomes high. 
In Section \ref{sec:higher_dim} is hinted that this problem is in fact specific to the dimension $2$.
To remedy this problem, we study the asymptotic behavior of $ \mathbb{P}^{x_0}(X_t \in dy) $ when $ r_0 $ is close to zero.  In fact, if the ratio $rr_0/t$ is small, then the density of the reflected Brownian motion expressed in \eqref{formula_reflected} can be approximated by the distribution obtained by taking only $n=0$:
\begin{equation}
\label{approximation_r_small}
\mathbb{P}^{x_0}(X_t \in dy) \simeq \frac{r}{t\alpha} e^{-\frac{r^2 + r_0^2}{2t}} I_0\left(\frac{rr_0}{t}\right)drd\theta.
\end{equation}
The function above can be renormalized so that it becomes a density which can be simulated using  inequality \eqref{majoration_I0} and the acceptance-rejection method with the reference density
\begin{equation}
\label{reference_density_modified}
\propto re^{-\frac{(r-r_0)^2}{2t}}drd\theta = \left((r-r_0)e^{-\frac{(r-r_0)^2}{2t}}dr + r_0e^{-\frac{(r-r_0)^2}{2t}}dr\right) d\theta.
\end{equation}
Then we modify Algorithm II as follows. We choose some small $\varepsilon \in (0,r_0)$ and:
\begin{itemize}
	\item Simulate the sequence of $(r_n)_n$ and $(\theta_n)_n$ as in Algorithm II in Section \ref{algorithm_for_reflected}.
	\item If after some iteration $n$, we have
	\begin{align}
		\label{eq:TTn}
	\frac{r_n^2}{T-T_n} < \varepsilon
	\end{align} (note that $T-T_n > 0$ is the remaining time of the simulated process after step $n$), then we directly simulate the final point $\bar X_T$ according to the approximation \eqref{approximation_r_small}:
	\begin{align}
	&  \mathbb{P}^{x_0}(\bar X_T \in dy | \ X_{T_n} = y_n)=C(\varepsilon) \frac{r}{(T-T_n)\alpha}e^{-\frac{r^2 +r_n^2}{2(T-T_n)}}I_0\left(\frac{rr_n}{T-T_n}\right) dr d\theta, \label{eq:6.6}\\
	& C(\varepsilon) :=\left(\int_0^\alpha \int_0^\infty \frac{r}{(T-T_n)\alpha}e^{-\frac{r^2 + r_n^2}{2(T-T_n)}}I_0\left(\frac{rr_n}{T-T_n}\right) dr d\theta \right)^{-1}.\nonumber
	\end{align}
Using that $I_0(x) \ge 1$ for all $x\geq 0$, one obtains that $C(\varepsilon) \le e^{r_n^2/(2(T-T_n))}$. This upper bound for $C(\varepsilon)$ is enough in order to implement the acceptance-rejection sampling method. In this case, the algorithm directly ends after one additional iteration. 
\end{itemize}

\begin{proposition}
\label{E_N_majoration_modified}
Denote by $\bar N$ the number of iterations of the modified algorithm, \eqref{eq:6.6}, then for any $ 1\leq a <p<2 $, there exists a quantity $C(a,p,x_0,T,m)>0$ such that for $\varepsilon$ small enough:
\begin{equation}
\label{majoration_EN}
\mathbb{E}^{x_0}[\bar N^a] \le \frac{C(a,p,x_0,T,m)}{\varepsilon^{p-1}}.
\end{equation}
\end{proposition}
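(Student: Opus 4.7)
The strategy adapts that of Proposition~\ref{prop:6}, replacing the clock $F(T)$ by a smaller one $F(\tau_\varepsilon)$, where
\[
\tau_\varepsilon := \inf\{t \ge 0 : R_t^2 \le \varepsilon(T-t)\} \wedge T.
\]
The modification of the algorithm is designed so that at most one additional iteration is performed after the early-stopping criterion $R_{T_n}^2 < \varepsilon(T - T_n)$ is satisfied. Combined with the skew-product identity $\sum_{i=1}^{n} s_i = F(T_n)$ derived in Section~\ref{sec:6.2}, this yields, modulo $O(1)$ bookkeeping and an exponentially small error,
\[
\mathbb{P}^{x_0}(\bar N \ge K) \le \mathbb{P}^{x_0}\!\Bigl(\textstyle\sum_{i=1}^{K-1} s_i \le F(\tau_\varepsilon)\Bigr).
\]
(On the event $\{\tau_\varepsilon = T\}$ the early-stopping criterion is inactive; there $F(\tau_\varepsilon) = F(T)$, and the constraint $R_s^2 > \varepsilon(T-s)$ for all $s < T$ already encodes the required $\varepsilon$-dependence.)

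Following the dichotomy in the proof of Proposition~\ref{prop:6}, I split this probability at a linear threshold $A_K := \mu K/(2\delta)$ for $\delta$ large enough:
\[
\mathbb{P}^{x_0}\!\Bigl(\textstyle\sum_{i=1}^{K-1} s_i \le F(\tau_\varepsilon)\Bigr) \le \mathbb{P}^{x_0}\!\Bigl(\textstyle\sum_{i=1}^{K-1} s_i \le A_K\Bigr) + \mathbb{P}^{x_0}(F(\tau_\varepsilon) \ge A_K).
\]
The first term decays exponentially in $K$ by the same Markov/Chernoff argument as in Proposition~\ref{prop:6} (using the Fourier evaluation of $\mathbb{E}[e^{-s_1}]$ in \eqref{eq:double_barrier_fourier}), so it contributes $O(1)$ to $\mathbb{E}^{x_0}[\bar N]$. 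Summation of the second term over $K$ yields, by the layer-cake formula, a bound proportional to $\mathbb{E}^{x_0}[F(\tau_\varepsilon)]$. It therefore suffices to prove
\[
\mathbb{E}^{x_0}[F(\tau_\varepsilon)] \le C(p, x_0, T, m)\,\varepsilon^{-(p-1)}.
\]

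By definition of $\tau_\varepsilon$, one has $\mathbf{1}_{\{s \le \tau_\varepsilon\}} \le \mathbf{1}_{\{R_s^2 \ge \varepsilon(T-s)\}}$; combined with Fubini and the Rice-type density $p_{|W_s|}(r) = (r/s)\,e^{-(r^2+r_0^2)/(2s)} I_0(r r_0/s)$ of $R_s = |W_s|$ (a direct specialization of \eqref{formula_reflected} to $\alpha = 2\pi$), this gives
\[
\mathbb{E}^{x_0}[F(\tau_\varepsilon)] \le \int_0^T \frac{1}{2s}\int_{\varepsilon(T-s)}^\infty \frac{1}{u}\,e^{-(u+r_0^2)/(2s)} I_0\!\left(\frac{r_0\sqrt u}{s}\right) du\, ds.
\]
The inner integral has a logarithmic singularity at the lower cutoff $u = \varepsilon(T-s)$; together with the Gaussian decay at $u = \infty$ (using $I_0(x) \lesssim e^x/\sqrt x$), it is of order $\log(1/(\varepsilon(T-s)))$ modulo $s$-dependent constants. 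Integration in $s$ then produces a bound of order $\log(1/\varepsilon)$, which is majorized by $C(p,x_0,T,m)\,\varepsilon^{-(p-1)}$ for any $p \in (1, 2)$ and $\varepsilon$ small enough. I expect the main technical obstacle to be the careful execution of this density estimate, especially controlling the behaviour near $s = 0$, where the factor $e^{-r_0^2/(2s)}$ must dominate the singular $1/s$ and the exponential growth of $I_0$, and near $s = T$, where the cutoff $\varepsilon(T-s)$ degenerates. Combining the two pieces then gives the advertised bound on $\mathbb{E}^{x_0}[\bar N]$.
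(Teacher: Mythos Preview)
Your approach is correct and follows the same architecture as the paper's proof: the same stopping time $\tau_\varepsilon$ (denoted $\zeta_\varepsilon$ there), the same dichotomy at a threshold $\mu K/\delta$, and the same Chernoff bound for the first piece. The one substantive difference is in the treatment of the second piece. The paper applies Markov's inequality with exponent $p\in(1,2)$, obtaining $\mathbb{P}^{x_0}(F(T\wedge\zeta_\varepsilon)>\mu K/\delta)\le (\delta/\mu K)^p\,\mathbb{E}^{x_0}[F(T\wedge\zeta_\varepsilon)^p]$, sums the $K^{-p}$ series, and then invokes Lemma~\ref{majoration_bessel_expectation} to bound the $p$-th moment by $C\,\varepsilon^{1-p}$. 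You instead sum the tail probabilities directly via layer-cake, which reduces the problem to bounding the \emph{first} moment $\mathbb{E}^{x_0}[F(\tau_\varepsilon)]$.

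Your route is in fact sharper: the density computation you sketch is exactly the $p=1$ case of the paper's Lemma~\ref{majoration_bessel_expectation}, and carrying it through (the paper's decomposition $I_1+I_2$ with $p=1$ works verbatim, the $\int r^{-1}\,dr$ producing the logarithm) yields $\mathbb{E}^{x_0}[F(\tau_\varepsilon)]=O(\log(1/\varepsilon))$ rather than $O(\varepsilon^{1-p})$. You then discard this improvement by dominating $\log(1/\varepsilon)\le C_p\,\varepsilon^{1-p}$, but it is worth noting that your argument actually delivers $\mathbb{E}^{x_0}[\bar N]=O(\log(1/\varepsilon))$. The paper's detour through the $p$-th moment is what forces the weaker $\varepsilon^{1-p}$ rate into the statement. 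The technical concerns you flag near $s=0$ and $s=T$ are real but handled by the same estimates the paper uses in Lemma~\ref{majoration_bessel_expectation}: the factor $e^{-r_0^2/(2s)}$ kills the $1/s$ singularity at $s=0$, and the degenerate cutoff at $s=T$ is absorbed by the $I_2$ term, which for $p=1$ evaluates to the convergent integral $-\int_0^1 u^{-1}\log(1-u^2)\,du$.
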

\begin{proof}
	We use a similar argument as in the proof of Proposition \ref{prop:6}.
	Let $\zeta_\varepsilon := \inf \ \lbrace t \in [0,T]: \ R_t^2/(T-t) \le \varepsilon \rbrace$ and let $K \in \mathbb{N}$ fixed. Using Markov's inequality, we have:
	\begin{align*}
		\mathbb{P}^{x_0}(\bar{N}-1 \ge K^{1/a}) & \le \mathbb{P}^{x_0}\left(\sum_{i=1}^{\lceil K^{1/a} \rceil} s_i \le F({T \wedge \zeta_\varepsilon}), \ F({T \wedge \zeta_\varepsilon})\leq \frac{\mu K^{1/a}}{\delta}  \right)  +
		\mathbb{P}^{x_0}\left(F({T \wedge \zeta_\varepsilon})> \frac{\mu K^{1/a}}{\delta}  \right)\\
		& \le e^{\frac{\mu K^{1/a}}{\delta} }
		\left(\mathbb{E}^{x_0}\left[e^{-s_1}\right]\right)^{\lceil K^{1/a} \rceil}
		+\frac{\delta^p}{(\mu K^{1/a})^p}\mathbb{E}^{x_0}\left[F({T \wedge \zeta_\varepsilon})^p\right].
	\end{align*}
	From here the result follows by using Lemma \ref{majoration_bessel_expectation}.
\end{proof}

\begin{proposition}
\label{total_var_modified}
Denote by $\bar{X}_T$, the result obtained from the above approximation algorithm. Then, for $ \alpha\in (0,2 \pi) $ and $\varepsilon$ small enough, the error made by the approximation in total variation distance satisfies:
$$ d_{\textup{TV}}(\bar{X}_T, X_T) \le C_m \varepsilon^{\min\left(1,\frac{\pi}{2\alpha}\right)},$$
where $C_m$ is a constant which only depends on $m$.
\end{proposition}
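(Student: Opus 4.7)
My plan is to couple the modified algorithm with the exact Algorithm II via common randomness, so that the two simulations produce identical samples until the first index $n^\ast$ at which the switching criterion $r_{n^\ast}^2/(T - T_{n^\ast}) < \varepsilon$ is triggered. On $\{n^\ast = \infty\}$ the two outputs coincide, so the discrepancy can only come from $\{n^\ast < \infty\}$. On this event, writing $t := T - T_{n^\ast}$ and $(r_0, \theta_0) := (r_{n^\ast}, \theta_{n^\ast})$, the Markov property of the reflected Brownian motion implies that $X_T$ has conditional law $p$ given by \eqref{formula_reflected} while $\bar X_T$ has conditional law $\tilde p$ given by \eqref{eq:6.6}, both evaluated at parameters $(r_0,\theta_0,t)$ satisfying $r_0^2/t < \varepsilon$. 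By the standard coupling inequality, it then suffices to control the conditional total variation uniformly:
$$d_{\textup{TV}}(\bar X_T, X_T)\ \le\ \sup\bigl\{d_{\textup{TV}}(p,\tilde p)\ :\ r_0^2/t < \varepsilon,\ \theta_0 \in [0,\alpha],\ t \in (0,T]\bigr\}.$$

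I would next decompose the exact density as $p = p_0 + q$, where $p_0(r,\theta) := \frac{r}{t\alpha} e^{-(r^2+r_0^2)/(2t)} I_0(rr_0/t)$ is the $n = 0$ term of the series in \eqref{formula_reflected} (so that $\tilde p = p_0/A$ with $A := \int p_0\, dr\, d\theta$) and $q$ collects the tail over $n \ge 1$. Since $p - \tilde p = (1 - 1/A)\,p_0 + q$ and $|A - 1| = \bigl|\int q\bigr| \le \int |q|$, the triangle inequality gives
$$d_{\textup{TV}}(p,\tilde p)\ =\ \frac{1}{2}\int |p - \tilde p|\, dr\, d\theta\ \le\ \int |q|\, dr\, d\theta.$$
For $\varepsilon$ small enough, $A$ stays bounded away from zero (as a consequence of the Bessel estimate below), so the manipulation is well-defined. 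The proposition thus reduces to showing $\int |q|\, dr\, d\theta \le C(m)\,\varepsilon^{\min(1,\pi/(2\alpha))}$.

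The last step is the analytic core. After substituting $u := r/\sqrt t$ and $\rho := r_0/\sqrt t$ (so $\rho < \sqrt\varepsilon$) and using the elementary bounds $\int_0^\alpha |\cos(n\pi\theta/\alpha)|\, d\theta \le 2\alpha/(n\pi)$ and $|\cos(n\pi\theta_0/\alpha)| \le 1$, one obtains
$$\int |q|\, dr\, d\theta\ \le\ \frac{4}{\pi}\, e^{-\rho^2/2} \sum_{n \ge 1}\int_0^\infty u\, e^{-u^2/2}\, I_{n\pi/\alpha}(u\rho)\, du.$$
Inserting the series \eqref{eq:10} for $I_\nu$ and integrating term by term against Gaussian moments yields, for $\nu = n\pi/\alpha$,
$$\int_0^\infty u\, e^{-u^2/2}\, I_\nu(u\rho)\, du\ =\ \frac{\Gamma(\nu/2+1)}{2^{\nu/2}\, \Gamma(\nu+1)}\,\rho^\nu + O(\rho^{\nu + 2}).$$
By Stirling the coefficient decays rapidly in $\nu$, so the series over $n \ge 1$ is dominated by its $n = 1$ summand, and hence $\int |q| \le C(m)\, \rho^{\pi/\alpha} = C(m)\, \varepsilon^{\pi/(2\alpha)}$ for $\varepsilon$ small enough. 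When $\pi/(2\alpha) \ge 1$ and $\varepsilon \le 1$, one further uses $\varepsilon^{\pi/(2\alpha)} \le \varepsilon$, combining both regimes into the uniform statement $d_{\textup{TV}}(\bar X_T, X_T) \le C(m)\, \varepsilon^{\min(1,\pi/(2\alpha))}$. I expect the main obstacle to be precisely this Bessel-series control, namely checking the $\rho^{\pi/\alpha}$-scaling uniformly in $(r_0,\theta_0,t)$ and ensuring that the resulting constant depends only on $m = \lceil \pi/\alpha \rceil$; all other ingredients (coupling, triangle inequality, normalization) are routine.
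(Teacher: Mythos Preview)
Your argument is correct and follows the same architecture as the paper's proof: reduce via coupling to the conditional total variation at the switching step, then control the tail $\sum_{n\ge 1}$ of the Bessel series in \eqref{formula_reflected}. Two technical choices differ. First, you handle the normalization constant by the clean identity $|A-1|=\bigl|\int q\bigr|\le\int|q|$, which folds the error from $C(\varepsilon)$ directly into the tail bound; the paper instead bounds $|C(\varepsilon)-1|/C(\varepsilon)$ separately (obtaining an $O(\varepsilon)$ contribution) and this is where the $\min$ with $1$ enters their argument. Your route shows the bound is actually $O(\varepsilon^{\pi/(2\alpha)})$ throughout, which is sharper when $\alpha<\pi/2$; you then weaken it to match the stated exponent. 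Second, for the tail itself the paper first bounds the series $\sum_{k\ge 1} I_{k\pi/\alpha}(x)$ pointwise by $(x/2)^{\pi/\alpha}e^{x^2/4+x/2}\,\alpha/\pi$ and then integrates in $r$ via a somewhat delicate change of variables, whereas you integrate each $I_{n\pi/\alpha}$ against the Gaussian first using the power series \eqref{eq:10} and Gaussian moments, and only then sum. Your order of operations is more transparent and yields the leading constant explicitly.

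One small slip: the elementary bound you quote, $\int_0^\alpha|\cos(n\pi\theta/\alpha)|\,d\theta\le 2\alpha/(n\pi)$, is off; the integral equals $2\alpha/\pi$ for every $n\ge 1$. Your displayed inequality for $\int|q|$ is nonetheless the one obtained from the correct value $2\alpha/\pi$, so the subsequent estimates are unaffected.
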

\begin{proof}
	If the approximation is not used, then $\bar{X}_T = X_T$. Else, we denote by $n$ the step when the approximation is used, so that we simulate the last step starting from $(r_n\cos(\theta_n),r_n\sin(\theta_n))$ with remaining time $T-T_n =: t'$. Since the $\varepsilon$-condition is reached, we have $r_n^2/t' < \varepsilon$.
	Then if we denote by $ d_{\text{TV}_n}(\bar{X}_T, X_T) $ the total variation distance conditioned to the filtration $ \mathbb{F}_n $  up to time $ T_n $, and $d_n:=1/2$ for $n=0$ and $d_n:=1$ otherwise, we have:
	\begin{align*}
		d_{\text{TV}_n}(\bar{X}_T, X_T) & \le \int_{\mathcal{D}}  \Big| (C(\varepsilon)-1+1)\frac{r}{t'\alpha} e^{-\frac{r^2+r_n^2}{2t'}}I_0\left(\frac{rr_n}{t'}\right) \\
		& \quad \quad \quad - \frac{2r}{t'\alpha}e^{-\frac{r^2+r_n^2}{2t'}} \sum_{k=0}^\infty d_n I_{k\pi/\alpha}\left(\frac{rr_n}{t'}\right) \cos\left(\frac{k\pi\theta}{\alpha}\right) \cos\left(\frac{k\pi\theta_n}{\alpha}\right)\Big| dr d\theta \\
		 & \le \frac{|C(\varepsilon)-1|}{C(\varepsilon)} + \int_0^\infty \frac{2r}{t'}e^{-\frac{r^2+r_n^2}{2t'}} \sum_{k=1}^\infty I_{k\pi/\alpha}\left(\frac{rr_n}{t'}\right) dr .
	\end{align*}
	Using that for all $x \ge 0$, $I_0(x) \ge 1$, we have $C(\varepsilon) \le e^{r_n^2/(2t')}$. On the other hand, we have
	\begin{align*}
	C(\varepsilon)^{-1} \le \frac{1}{t'} \int_0^\infty r e^{-r^2/(2t')} I_0\left(\frac{rr_n}{t'}\right) dr = \int_0^\infty u e^{-u^2/2} I_0\left(\frac{r_n u}{\sqrt{t'}}\right) du. 
	\end{align*}
Using \eqref{eq:10} one immediately obtains that $ |I_0'(x)|+|I_0''(x)|\leq 2e^x $. Therefore the function $ A(\delta):=\int_0^\infty u e^{-u^2/2}I_0(\delta u)du $, $ \delta\geq 0 $ is twice differentiable. As
 $I_0(0)=1$ and $I'_0(0)=0$, we obtain $A(0)=1$, $A'(0)=0$ and then $A(\delta) = 1+O(\delta^2)$ as $\delta \to 0$ which gives $C(\varepsilon) \ge 1 - C\varepsilon$ for small enough $ \varepsilon>0 $.
	These bounds on $C(\varepsilon)$ imply that $|C(\varepsilon)-1|/(C(\varepsilon)\varepsilon)$ is bounded.
	For the second term, using the expression \eqref{eq:10} and a multiplicity property of the number of terms of the type $k\pi/\alpha$ in the interval $ [0,k] $, we have
	\begin{align*}
		& \sum_{k=1}^\infty I_{k\pi/\alpha}\left( \frac{rr_n}{t'}\right) = \left(\frac{rr_n}{2t'}\right)^{\frac{\pi}{\alpha}} \sum_{k=0}^\infty \sum_{m=0}^\infty \frac{1}{m! \Gamma\left(m+\frac{(k+1)\pi}{\alpha}+1\right)} \left(\frac{rr_n}{2t'}\right)^{2m+\frac{k\pi}{\alpha}}\\
		& \quad \le \left(\frac{rr_n}{2t'}\right)^{\frac{\pi}{\alpha}}\sum_{m=0}^\infty \frac{1}{m!}\left(\frac{rr_n}{2t'}\right)^{2m} \sum_{k=0}^\infty \frac{1}{\Gamma\left(\frac{(k+1)\pi}{\alpha}+1\right)}\left(\frac{rr_n}{2t'}\right)^{\frac{k\pi}{\alpha}} \le \left(\frac{rr_n}{2t'}\right)^{\frac{\pi}{\alpha}} e^{\left(\frac{rr_n}{2t'}\right)^2} \frac{\alpha}{\pi} e^{\frac{rr_n}{2t'}}.
	\end{align*}

	With this inequality and the change of variable $ u = rr_n/(2t')$, we obtain
	\begin{align*}
		d_{\text{TV}_n}(\bar{X}_T, X_T) \le&
		C\varepsilon+ \frac{8\alpha t'}{\pi r_n^2} e^{-\frac{r_n^2}{2t'}}\int_0^\infty u^{1+\frac{\pi}{\alpha}}e^{u^2+u}e^{-\frac{2t'}{r_n^2}u^2}du =: C\varepsilon + I(\varepsilon).
	\end{align*}

	The integral $ I(\varepsilon) $ converges if $\gamma \equiv \gamma(\varepsilon):= 2t'/r_n^2 > 1$, which is satisfied as soon as $\varepsilon < 2$. Then, denoting $\beta := 1 +\pi/\alpha$,
	\begin{align*}
		I(\varepsilon) \le& \frac{8\alpha t'}{\pi r_n^2} \int_0^\infty u^\beta e^{-(\gamma-1)u^2 + u}du = \frac{8\alpha t'}{\pi r_n^2}e^{\frac{1}{4(\gamma-1)}} \int_0^\infty u^\beta e^{-(\gamma-1)\left(u-\frac{1}{2(\gamma-1)}\right)^2}du\\
		\le & \frac{8\alpha t'}{\pi r_n^2}e^{\frac{1}{4(\gamma-1)}} \left( \int_{\frac{1}{2(\gamma-1)}}^\infty u^\beta e^{-(\gamma-1)u^2} du + \left(\frac{1}{2(\gamma-1)}\right)^\beta \int_{-\frac{1}{2(\gamma-1)}}^{0} e^{-(\gamma-1)u^2}du \right) \\
		\le & \frac{4\alpha \gamma}{\pi} e^{\frac{1}{4(\gamma-1)}} \left( \frac{1}{\sqrt{\gamma-1}^{1+\beta}} \int_0^\infty v^\beta e^{-v^2}dv + \frac{1}{2^\beta(\gamma-1)^{\beta+1/2}}  \int_{-\frac{1}{2\sqrt{\gamma-1}}}^{0} e^{-v^2} dv \right) .
	\end{align*}
	The first term converges to zero as  $O(\varepsilon^{\pi/(2\alpha)})$   while the second term converges to zero as  $O(\varepsilon^{1+ \pi/\alpha})$ when $ \varepsilon\to 0 $. Taking all the elements into account we obtain that 
	$$ d_{\text{TV}}(\bar{X}_T, X_T) \underset{\varepsilon \rightarrow 0}{=} O\left(\varepsilon^{\min\left(1,\frac{\pi}{2\alpha}\right)}\right) .$$
\end{proof}

\section{Adaptation of the algorithms to general processes}
\label{section:extensions}

\subsection{Stopped Brownian motion with constant drift}
\label{girsanov_stopped}

Let $b \in \mathbb{R}^2$ and consider the Brownian motion with drift: $\widetilde{W}_t := W_t + bt$. Then by Girsanov's theorem, $\widetilde{W}$ is a Brownian motion under the probability
\begin{equation}
\label{eq:girsanov_proba_def}
\mathbb{Q}^{x_0}_{|\mathcal{F}_t} := e^{-b \cdot W_t - b^2t/2} \mathbb{P}^{x_0}_{|\mathcal{F}_t},
\end{equation}
so that for any bounded test function $f : \mathcal{D}\times [0,T] \rightarrow \mathbb{R}$:
$$ \mathbb{E}^{x_0} \left[e^{-b \cdot \widetilde{W}_{T\wedge \widetilde\tau} + \frac{b^2T\wedge \widetilde\tau }{2}} f(\widetilde{W}_{T \wedge \widetilde\tau}, T\wedge \widetilde{\tau}) \right] = \mathbb{E}^{x_0} \left[f(W_{T \wedge \tau}, T\wedge \tau)\right] .$$
So we have:
$$ \mathbb{E}^{x_0} \left[f(\widetilde{W}_{T\wedge \widetilde{\tau}},T\wedge \widetilde{\tau})\right] =
\mathbb{E}^{x_0} \left[ e^{b \cdot W_{T\wedge \tau}-\frac{b^2T\wedge \tau}{2}} f(W_{T\wedge \tau}, T\wedge \tau) \right] .$$
To simulate the Brownian motion with drift stopped in the wedge $\mathcal{D}$, we proceed as follows:
\begin{enumerate}
	\item Simulate $W_{T\wedge\tau}$ and $T \wedge \tau$ for the stopped process without drift according to Algorithm I in Section \ref{algorithm_for_stopped}.
	\item The result of the simulation becomes
\begin{equation}	
	e^{-\frac{b^2 T \wedge \tau}{2}} e^{b \cdot W_{T\wedge\tau}} f(W_{T\wedge\tau})
\end{equation}
\end{enumerate}

We can also deduce an explicit formula for the density of the stopped Brownian motion with drift using \eqref{formula_stopped}. In fact, for $y \in \mathcal{D}$, we have
\begin{equation}
\mathbb{P}^{x_0}(\widetilde W_t  \in dy, \ \widetilde\tau > t) = \frac{2r}{t \alpha}e^{-\frac{b^2t}{2}} e^{b \cdot y} e^{-(r^2 + r_0^2)/2t} \sum_{n=1}^{\infty} I_{n\pi / \alpha}\left(\frac{rr_0}{t}\right) \sin\left(\frac{n\pi \theta}{\alpha}\right) \sin\left(\frac{n \pi \theta_0}{\alpha}\right) dr d\theta ,
\end{equation}
with the same notations as before: $ \widetilde\tau:=\inf\{s: \ W_s + bs\in\partial {\mathcal D}\} $, $x = (r_0 \cos(\theta_0), r_0\sin(\theta_0))$ and $y = (r\cos(\theta),r\sin(\theta))$.

\subsection{Adaptation of the simulation algorithms for It\={o} processes}
\label{sec:ito_process}

In this section, we present some extensions of the above simulation methods applied to the approximation of stopped and reflected diffusions at some fixed time $ T>0 $. Let us start, first with the case of killed diffusion. 

Consider the following It\={o} process in $\mathbb{R}^2$:
\begin{equation}
\left\{ \begin{array}{l}
dY_t = b(Y_t)dt + \sigma(Y_t)dW_t \\
Y_0 = x_0,
\end{array} \right.
\end{equation}
where $(W_t)_{t\geq 0}$ is a standard two-dimensional Brownian motion and $x_0 \in \mathcal{D}$. We define $\tau$ to be the exit time of $(Y_t)_{t\geq 0}$ from $\mathcal{D}$.

\medbreak

\textbf{Simulation of $Y_{T \wedge \tau}$:} Choose $ t_i:=Ti/n$, $ i=0,...,n $,  a uniform partition of $[0,T]$ and consider the following Euler-Maruyama scheme. Given $\bar{Y}_{t_k}\in \mathrm{int}(\mathcal{D})$ (with $\bar{Y}_{t_0} = x_0$) and supposing that $ \bar{\tau}:=\inf\{s;\bar{Y}_s\in \partial\mathcal{D}\} >t_k$, simulate $\bar{Y}_{t_{k+1}}$ as the final value of the following Brownian motion with drift, stopped on the wedge $\mathcal{D}$, $ t \in [t_k,t_{k+1}] $:
\begin{equation}
\label{euler_scheme}
\bar{Y}_{t\wedge \bar\tau}:= \bar{Y}_{t_k} + b(\bar{Y}_{t_k})(t-t_k) + \sigma(\bar{Y}_{t_k})\cdot (W_{t\wedge \bar\tau}-W_{t_k}),
\end{equation}
 This is done using the algorithm in Section \ref{girsanov_stopped}. More precisely, we simulate $\bar{Y}_{T \wedge \bar{\tau}}$ taking $b \equiv 0$ while keeping track of the Brownian increments; then instead of $f(\bar{Y}_{T \wedge \bar{\tau}})$, the result of the simulation becomes
\begin{equation}
\exp\left( - \frac{1}{2} \int_0^{T \wedge \bar{\tau}} |\sigma^{-1}(\bar{Y}_s) \cdot b(\bar{Y}_s)|^2 ds + \int_0^{T \wedge \bar{\tau}} (\sigma^{-1}(\bar{Y}_s) \cdot b(\bar{Y}_s)) \cdot dW_s \right)  f(\bar{Y}_{T \wedge \bar{\tau}}).
\end{equation} 
 Note that we have to take into account the decorrelation step and the change of angle as described in Section \ref{sec:3.1} at every step. If the process \eqref{euler_scheme} exits the wedge $\mathcal{D}$ in the interval $ [t_k,t_{k+1}] $ then the algorithm directly stops. This scheme has weak order one. That is, for any $ f,b,\sigma \in \mathcal{C}^5_b (\bar{\mathcal{D}})$ one has that there exists a constant $ C_f>0 $
 \begin{align*}
 	\left|\mathbb{E}^{x_0}\left[f(Y_{T\wedge\tau})\right]-\mathbb{E}^{x_0}\left[f(\bar{Y}_{T\wedge\bar{\tau}})\right]\right|\leq C_fn^{-1}.
 \end{align*}
The proof is straightforward if one follows the same line of proof as in \cite{Gobet}. In particular, see Section 2.2.1 and note that our case is simpler as the proposed scheme does not have the possibility of touching the boundary before it is stopped or reaches $ T $.  For a discussion about the associated partial differential equation with Dirichlet conditions, see the discussion right after \eqref{reflected_pde}.

\medbreak

Since the Girsanov change of measure uses an exponential function, it may not be suitable to processes with large drifts as it may increase the variance. For this reason, we also propose a two-step Euler-Maruyama scheme for the stopped process.

\textbf{Simulation of $Y_{T \wedge \tau}$:} Choose $ t_i:=Ti/n$, $ i=0,...,n $,  a uniform partition of $[0,T]$ and consider the following two-step Euler-Maruyama scheme. Given $\bar{Y}_{t_k}\in \mathrm{int}(\mathcal{D})$ (with $\bar{Y}_{t_0} = x_0$) and supposing that $ \bar{\tau}:=\inf\{s;\bar{Y}_s\in \partial\mathcal{D}\} >t_k$ and that $\tilde{\tau}:=\inf\{s;\tilde{Y}_s\in \partial\mathcal{D}\} >t_k$, simulate $\bar{Y}_{t_{k+1} \wedge \bar{\tau}}$ in two steps as follows for $ t \in [t_k,t_{k+1}] $:
\begin{align}
	\tilde{Y}_{t \wedge \bar{\tau}} & := \bar{Y}_{t_k} + b(\bar{Y}_{t_k}) (t \wedge \bar{\tau} - t_k) \\
	\bar{Y}_{t \wedge \tilde{\tau}} & := \tilde{Y}_{t_{k+1} \wedge \bar{\tau}} + (W_{t \wedge \bar{\tau}}-W_{t_k}).
\end{align}
The second step is performed using the algorithm in Section \ref{algorithm_for_stopped}. Note that we have to take into account the decorrelation step and the change of angle as described in Section \ref{sec:3.1}. If  the process exits the wedge $\mathcal{D}$ in the interval $ [t_k,t_{k+1}] $ then the algorithm directly stops. This scheme has weak order one. That is, for any $ f,b,\sigma \in \mathcal{C}^5_b (\bar{\mathcal{D}})$, there exists a constant $ C_f>0 $ such that
 \begin{align}
 	\left|\mathbb{E}^{x_0}\left[f(Y_{T\wedge\tau})\right]-\mathbb{E}^{x_0}\left[f(\bar{Y}_{T\wedge\bar{\tau}})\right]\right|\leq C_fn^{-1}.
 \end{align}

The proof is similar to the proof for the convergence of the two-step Euler scheme for the reflected processes in Proposition \ref{prop:two_step_reflected}.

\medbreak

Now let $(Z_t)_{t\geq 0}$ to be the solution of a  stochastic differential equation with drift coefficient $ b $ which is normally reflected on the boundary of the wedge $\mathcal{D}$. That is, 
\begin{align*}
dZ_t = b(Z_t)dt + dW_t+dL_t.
\end{align*}
For details on existence and uniqueness for this equation we refer to \cite{Saisho}.
\medbreak
In this case, we do not know how to extend the simulation method using the Girsanov change of measure; instead we propose a two-step scheme.

\textbf{Simulation of $Z_T$:} Choose $ t_i:=Ti/n$, $ i=0,...,n $ a uniform partition of $[0,T]$ and consider the following two-step scheme. Given $\bar{Z}_{t_k}$ (with $\bar{Z}_{t_0} = x_0$), simulate $\bar{Z}_{t_{k+1}}$ as the  final value of the reflection on the wedge $\mathcal{D}$ of the Brownian motion and drift in two steps as follows for $ t \in [0,t_{k+1}-t_k] $
\begin{align}
	\label{eq:two_step:1}
	\tilde{Z}_{t+t_k} & :=\bar{Z}_{t_k} + b(\bar{Z}_{t_k})(t-t_k)+L^1_t-L^1_{t_k}\\
	\label{eq:two_step:2}
	\bar{Z}_{t+t_k}& := \tilde{Z}_{t_{k+1}} +  (W_t-W_{t_k})+L^2_t-L^2_{t_k}.
\end{align}
The terms $ L^i $, $ i=1,2 ,$ denote the respective local time terms for each of the two steps. 
 The simulation of the second step of this algorithm uses the argument described in either  Sections \ref{algorithm_for_reflected} or \ref{algorithm_modified}. The weak rate of convergence for these methods is as follows:
 \begin{proposition}
 \label{prop:two_step_reflected}
 Assume that $ f,b\in \mathcal{C}^5_b (\bar{\mathcal{D}})$ then 
 	\begin{align}
 		\left|\mathbb{E}^{x_0}\left[f({Z}_{T})\right]-\mathbb{E}^{x_0}\left[f(\bar{Z}_{T})\right]\right|\leq C_fn^{-1}.
 		\label{eq:712}
 	\end{align}
 Denoting by $ \hat {Z}_T$, the result of the algorithm using the approximation described in Section \ref{algorithm_modified}, we have
 	\begin{align}
 		\left|\mathbb{E}^{x_0}\left[f({Z}_{T})\right]-\mathbb{E}^{x_0}\left[f(\hat{Z}_{T})\right]\right|\leq C_{f}\left(n^{-1}+\varepsilon^{\min\left(1,\frac{\pi}{2\alpha}\right)}\right).
 		\label{eq:713}
 	\end{align}
 \end{proposition}
\begin{proof}
	The proof of \eqref{eq:713} is a small modification of \eqref{eq:712}. Therefore, we prove the rate of convergence for the  algorithm provided in Section \ref{algorithm_for_reflected}.
	
	 The argument follows as in the classical diffusion case which can be found in the proof of the main result in \cite{talay1990}. We use this argument and refer the reader to \cite{talay1990} for more details. First, consider $ u $ to be the solution of the following backward partial differential equation with Neumann conditions:
\begin{align}
	\label{pde:2}
		\begin{array}{ll}
		\partial_t u (t,x)  + \mathcal{L}u(t,x)=0, \ & (t,x)\in [0,T] \times \mathcal{D}  \\
		u(T,x) = f(x), \ & x\in \mathcal{D}\\
		\nabla u(t,x) \cdot n(x) = 0, \ & t > 0, \ x \in \partial \mathcal{D} .
	\end{array}
\end{align}
Here, $ \mathcal{L}u(t,x):= b(x)\nabla u(t,x)+\frac{1}{2} \Delta u (t,x) $.
Following the same discussion as in \eqref{reflected_pde}, one obtains that $ u \in \mathcal{C}^{7/2,7}([0,T]\times \bar{\mathcal{D}})$. This property will be used when doing Taylor's expansions for $ u $. 
Next, note that 
\begin{align}
	\label{eq:sum}
	\mathbb{E}^{x_0}\left[f({Z}_{T})\right]-\mathbb{E}^{x_0}\left[f(\bar{Z}_{T})\right]=&\sum_{i=0}^{n-1}\left(\mathbb{E}^{x_0}\left[u(t_i,\bar{Z}_{t_i})-u(t_{i+1},\bar{Z}_{t_{i+1}})\right]\right)=:-\sum_{i=0}^{n-1}A_{i}.
\end{align}
We will now consider each term within the above sum. Using It\=o-Tanaka formula (here one uses \eqref{pde:2} to cancel all local time terms), we have
\begin{align*}
	A_i & = \mathbb{E}^{x_0}\left[u(t_{i},\tilde{Z}_{t_{i+1}})-u(t_i,\bar{Z}_{t_i})+\int_{t_i}^{t_{i+1}}\left(\partial_t +\frac 12\Delta  \right)u(s,\bar{Z}_s)ds\right]\\
	& = \int_{t_i}^{t_{i+1}}\mathbb{E}^{x_0}\left[b(\bar{Z}_{t_i})\nabla u(t_i,\tilde{Z}_s)+\left(\partial_t+\frac 12\Delta  \right)u(s,\bar{Z}_s)\right]ds.
\end{align*}
The following step relies on using the Taylor expansion for each of the terms involving $ \nabla u(t_i,\tilde{Z}_s) $ and $ \left(\partial_t +(1/2)\Delta  \right)u(s,\bar{Z}_s) $ at $ (t_i,\bar{Z}_{t_i}) $ together with the fact that $ u $ solves \eqref{pde:2}. Without going into much detail, let us consider the expansion of the term $  \partial_tu(s,\bar{Z}_s)$:
\begin{align*}
	\partial_tu(s,\bar{Z}_s) & = \partial_tu(t_i,\bar{Z}_{t_i}) + \int_0^1 \nabla\partial_tu(t_i,\alpha\bar{Z}_s+(1-\alpha)\bar{Z}_{t_i})\cdot (\bar{Z}_s-\bar{Z}_{t_i})d\alpha \\
	& \quad + \int_0^1 \partial_t\partial_t u(\alpha s+(1-\alpha) t_i, \bar{Z_s})\cdot (s-t_i)d\alpha
\end{align*}
Each of the above derivatives of $ u $ is bounded. In the case of the increments of $ \bar{Z}_s-\bar{Z}_{t_i}=(W_s-W_{t_i})+L^2_s-L^2_{t_i}+\tilde{Z}_{t_{i+1}}-\bar{Z}_{t_i} $ one has to take one further step in the Taylor expansion to be able to use the fact that the expectation of the Brownian increment is zero. Namely,
\begin{align*}
& \int_0^1 \nabla\partial_tu(t_i,\alpha\bar{Z}_s+(1-\alpha)\bar{Z}_{t_i})\cdot (\bar{Z}_s-\bar{Z}_{t_i})d\alpha \\
& \quad = \nabla \partial_t u(t_i, \bar{Z}_{t_i}) \cdot (\bar{Z}_s-\bar{Z}_{t_i}) + \int_0^1 (1-\alpha) (\bar{Z}_s-\bar{Z}_{t_i})^\top \cdot \nabla^2 \partial_t u(t_i,\alpha\bar{Z}_s+(1-\alpha)\bar{Z}_{t_i}) \cdot (\bar{Z}_s-\bar{Z}_{t_i}) d\alpha.
\end{align*}

After this and canceling the first terms of the expansion using \eqref{pde:2} and taking expectations we see that the sum of most terms is of order $ O(n^{-2}) $. The remaining terms  are bounded by  $  C\int_{t_i}^{t_{i+1}}(L^j_s-L^j_{t_i})ds$ for $ j=1,2 $. Considering these terms within \eqref{eq:sum}, one obtains for $ L^j_T = \sum_{i=0}^{n-1}(L^j_{t_{i+1}}-L^j_{t_i})$ that these sums are bounded by 
\begin{align*}
\sum_{i=0}^{n-1}\int_{t_i}^{t_{i+1}}\mathbb{E}^{x_0}[L^j_s-L^j_{t_i}]ds\leq n^{-1}\mathbb{E}^{x_0}[L^j_T]\leq Cn^{-1},
\end{align*}
where we used that $t \mapsto L^j_t$ is increasing.
The last inequality follows from an estimate for the Skorohod problem in convex domains (see \cite[Theorem 4.2]{Saisho}). In the case of the algorithm described in Section \ref{algorithm_modified} one may modify the above arguments adding at each time $ t_i $, the possibility of ending the simulation in the partition interval using the criteria provided in \eqref{eq:TTn}. Using Proposition \ref{total_var_modified} one obtains the result.
\end{proof}

\section{Simulations}
\label{section:simulations}

We implement the algorithms\footnote{The programs with a demonstration notebook are available at \url{https://github.com/Bras-P/simulation_reflected_brownian_motion_wedge}} described in Sections \ref{algorithm_for_stopped} and \ref{algorithm_for_reflected} as well as the approximated version in Section \ref{algorithm_modified}. We also give simulation results obtained with Metzler's algorithm \cite[Section 2.1]{metzler2008}. This last algorithm simulates $W_\tau$ (instead of $W_{\tau \wedge T}$) directly for a wedge of general angle $\alpha$, by:
\begin{enumerate}
	\item Simulate the radius $|W_\tau|$ using \eqref{eq:metzler_radius}.
	\item Conditionally simulate $\tau$ by acceptance-rejection, approximating the infinite sum by a partial sum, and using a Cauchy distribution as reference density.
\end{enumerate}
However, Metzler's algorithm cannot be adapted to simulate $W_{\tau \wedge T}$ and is biased for the simulation of $\tau$ (see \cite[Proposition 2.1.8]{metzler2008}).

In Tables \ref{table:sim:1} and \ref{table:sim:2}, we consider the Monte Carlo estimation of various expectations. We choose positive test functions in order to avoid cancellations and $r_0$ so that $\mathbb{E}^{x_0}[\tau]$ is of the same order as $T$, so that the stopped and reflected processes are significantly different than the standard Brownian motion.

\begin{table}
\centering
\begin{tabular}{cccccc}
\hline 
& $\mathbb{E}^{x_0}$ & 95 \% interval & Time (s) & MC iterations & $\mathbb{E}^{x_0}[N]$ \\ 
\hline 
$\mathbb{E}^{x_0}[f(W_\tau)]$ with \cite{metzler2008} & 3.473 & $\pm$ 0.082 & 0.67 & 50000 & 1 \\ 
\hline 
$\mathbb{E}^{x_0}[f(W_\tau)]$ & 3.489 & $\pm$ 0.085 & 1.04 & 50000 & 1.45 \\ 
\hline
$\mathbb{E}^{x_0}[\tau]$ with \cite{metzler2008} & 0.742 & $\pm$ 0.030 & 7.06 & 20000 & - \\
\hline
$\mathbb{E}^{x_0}[\tau]$ & 0.590 & $\pm$ 0.024 & 4.92 & 20000 & - \\
\hline
$\mathbb{E}^{x_0}[f(W_{\tau \wedge T})]$ & 2.980 & $\pm$ 0.049 & 5.79 & 10000 & 1.37 \\
\hline
$\mathbb{E}^{x_0}[(W_{\tau \wedge T}) \cdot e_1]$ & 1.441 & $\pm$ 0.012 & 6.26 & 10000 & 1.37 \\
\hline
$\mathbb{E}^{x_0}[f(X_T)]$, $\varepsilon=0$ & 4.138 & $\pm$ 0.224 & 13.2 & 1000 & 75.6 \\
\hline
$\mathbb{E}^{x_0}[f(X_T)]$, $\varepsilon=0.03$ & 4.313 & $\pm$ 0.072 & 28.2 & 10000 & 5.11 \\
\hline
\end{tabular}
\caption{Simulations with $\alpha = 0.9$, $r_0=1.5$, $\theta_0=0.3$, $f(x,y)=x^2+y^2$, $T=1$.}
\label{table:sim:1}
\end{table}

\begin{table}
\centering
\begin{tabular}{cccccc}
\hline
& $\mathbb{E}^{x_0}$ & 95 \% interval & Time (s) & MC iterations & $\mathbb{E}^{x_0}[N]$ \\ 
\hline 
$\mathbb{E}^{x_0}[f(W_{\tau \wedge T})]$ & 0.195 & $\pm$ 0.003 & 8.85 & 10000 & 1.28 \\
\hline
$\mathbb{E}^{x_0}[f(X_T)]$, $\varepsilon=0.03$ & 0.117 & $\pm$ 0.003 & 9.26 & 5000 & 2.73 \\
\hline
\end{tabular}
\caption{Simulations with $\alpha = 0.58$, $r_0=3$, $\theta_0=0.4$, $f(r\cos(\theta),r\sin(\theta))=\sin^2(\theta)$, $T=1$.}
\label{table:sim:2}
\end{table}

We note that for the estimation of $\mathbb{E}[f(W_\tau)]$, our simulation method finds a similar value as the algorithm proposed by Metzler, as both are exact simulation methods. However, for the estimation of $\mathbb{E}[\tau]$, the bias in the method proposed by Metzler seems to be significant.

As a second way to check our algorithm, we also simulated the projection on the first axis of $W_{\tau \wedge T}$. Note that the simulation gives a result close to the initial point $1.5 \cdot \cos(0.3) \simeq 1.4330 $ which is reported on the sixth line of Table \ref{table:sim:1}. This is because  $t \mapsto W_{\tau \wedge t}$ is a martingale.

For the estimation of $\mathbb{E}[f(X_T)]$ (reflected Brownian motion), the exact algorithm takes too much time, as hinted in Proposition \ref{prop:6}, hence the need to use the approximation version from Section \ref{algorithm_modified}. We only give a simulation example with 1000 samples, as we could not get a more proper estimation. Indeed, if we increase the number of samples, there appear trajectories where $N$ becomes large and where the algorithm does not stop in reasonable time.
\begin{figure}[!ht]
	{%
		\includegraphics[width=0.49\textwidth]{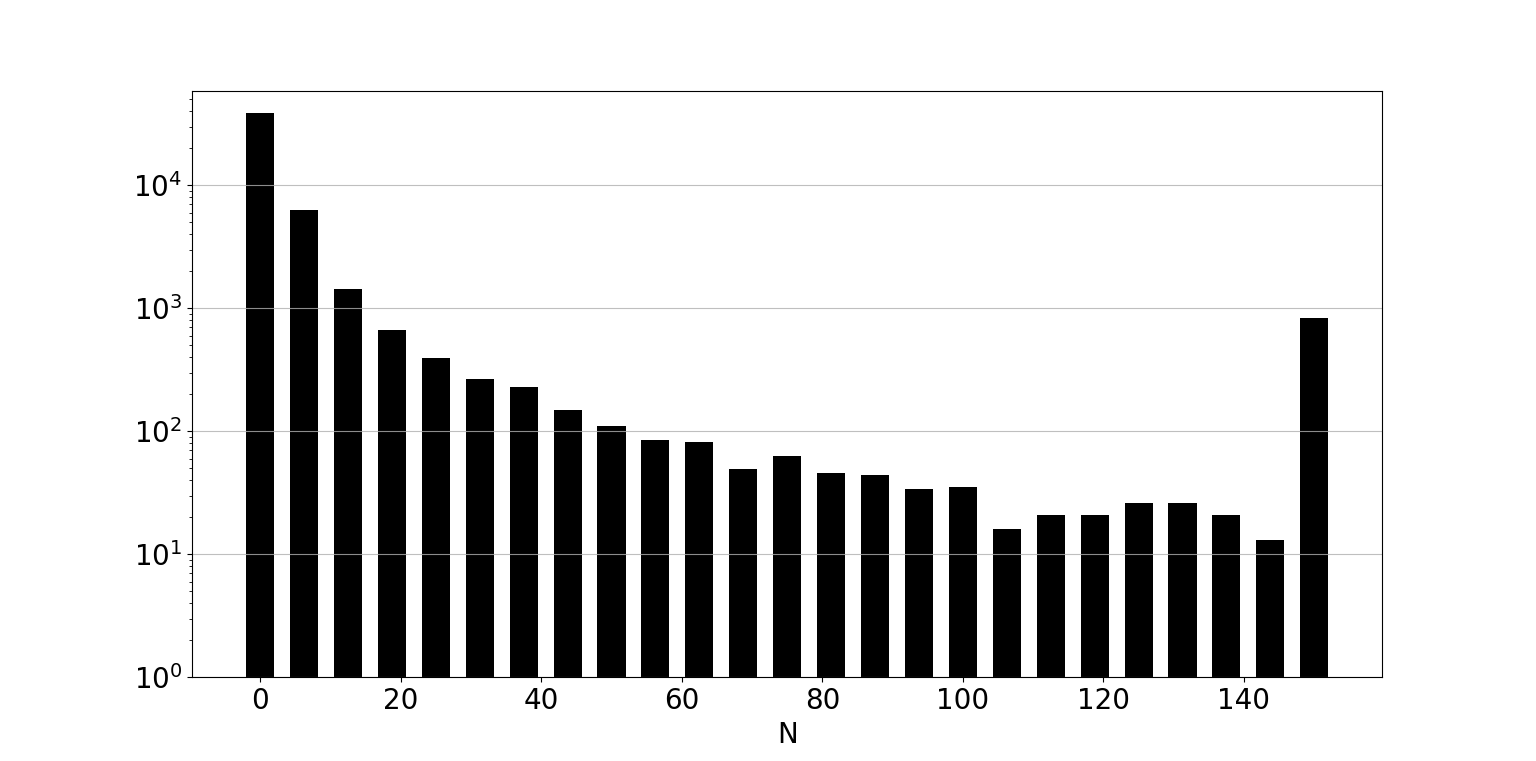}
	}
	\hfill\centering
	{%
		\includegraphics[width=0.49\textwidth]{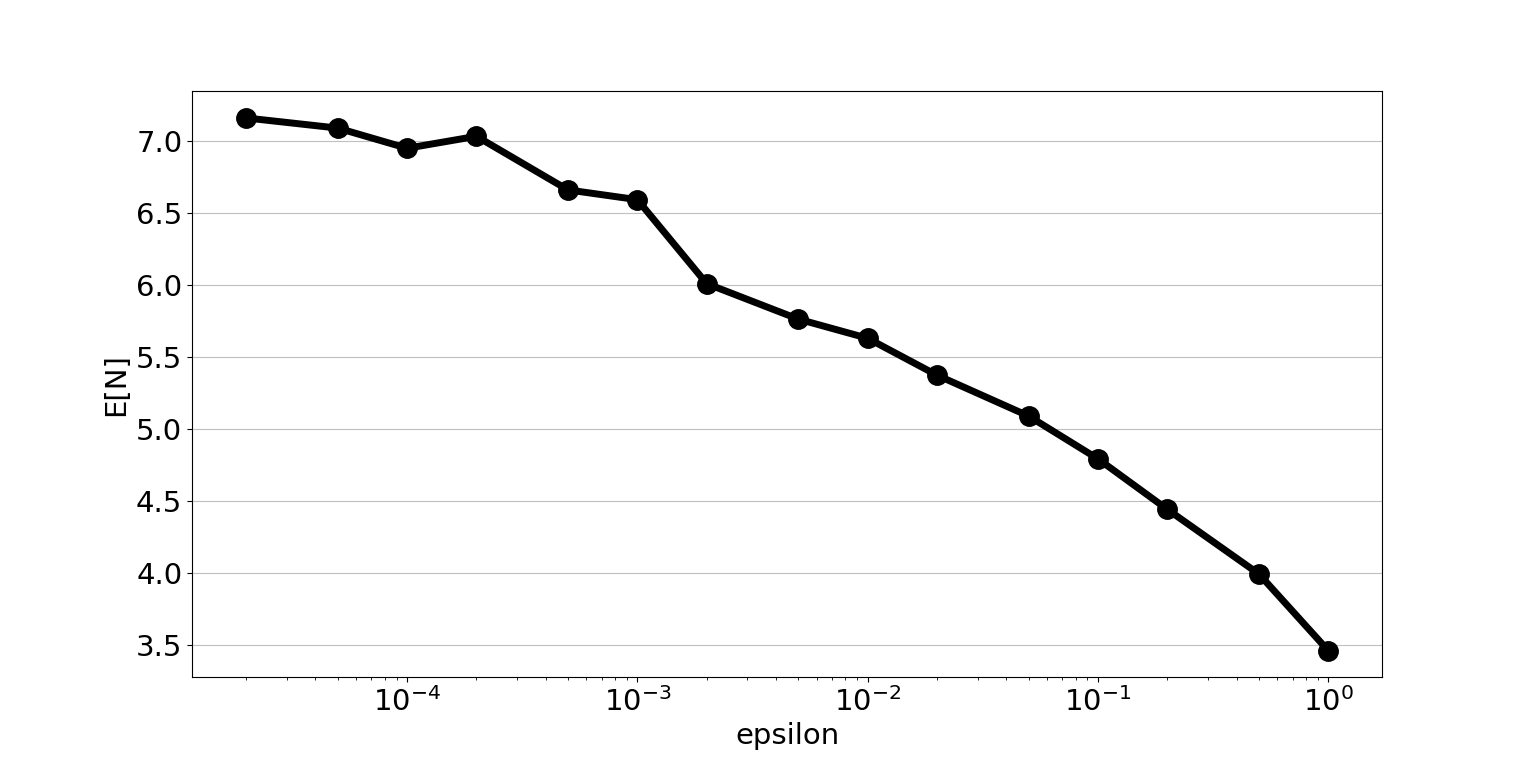}
	}
	\caption{(left) Histogram of $N$ for the exact reflected algorithm with 50000 Monte Carlo iterations. The horizontal axis represents the values of $N$ while the vertical axis gives the number of trajectories such that $N$ is in the interval noted in the horizontal axis.  The last bar counts all the values greater than $150$. On the right, average number of iterations in function of $\varepsilon$. The parameters are the same as in Table \ref{table:sim:1} and for each value of $ \varepsilon $, 50000 Monte Carlo iterations are used. }
	\label{fig:sim_N_epsilon}
\end{figure}

To have a better idea of the behavior of the algorithm, we provide on the left side of  Figure \ref{fig:sim_N_epsilon}, the histogram of $N$ for the exact Algorithm II from Section \ref{algorithm_for_reflected}; if, for a trajectory, $N$ exceeds 150, then we stop the algorithm. We observe that for most of the trajectories, $N$ is not too large (does not exceed 5). However, for a few iterations (around 0.2\%) which corresponds to the case where the radius $r$ becomes small, $N$ is large, which slows down the algorithm.
On the right side of Figure \ref{fig:sim_N_epsilon} we trace $\mathbb{E}^{x_0}[N]$ for the approximated reflected algorithm in function of $\varepsilon$. However, we could not trace the bias in function of $\varepsilon$, as it seems negligeable in comparison with the size of the confidence interval. This is why we do not need to lengthen the simulation time of each trajectory by choosing a very small $\varepsilon$; $\varepsilon=0.02$ or $\varepsilon=0.05$ with $\mathbb{E}^{x_0}[N] \approx 5$ is sufficient.

In Table \ref{table:sim_ito_process}, we implement the algorithms adapted to general It\={o} processes from Section \ref{sec:ito_process}.

\begin{table}
\centering
\begin{tabular}{cccccc}
\hline
& $\mathbb{E}^{x_0}$ & 95 \% interval & Time (s) & MC iterations & Discretization step \\ 
\hline 
$\mathbb{E}^{x_0}[f(Y_{\tau \wedge T})]$ [Girsanov] & 2.83 & $\pm$ 0.06 & 456 & 5000 & T/500 \\
\hline
$\mathbb{E}^{x_0}[f(Y_{\tau \wedge T})]$ [Two-step] & 2.84 & $\pm$ 0.06 & 457 & 5000 & T/500 \\
\hline
$\mathbb{E}^{x_0}[f(Z_T)]$, $\varepsilon=0.01$ & 3.77 & $\pm$ 0.09 & 1140 & 5000 & T/500 \\
\hline
\end{tabular}
\caption{Simulation of the process $dY_t = -\mu(Y_t-\kappa) + \sigma dW_t$, with $\alpha = 0.9$, $r_0=1.5$, $\theta_0=0.3$, $\mu=(0.1,0.2)$, $\kappa=(0.7,0.5)$, $\sigma=I_2$, $f(x,y)=x^2+y^2$, $T=1$. $ Z $ corresponds to the reflected process in the same domain. The first stopped process is simulated using the Girsanov method while the second one using the two-step Euler-Maruyama scheme.}
\label{table:sim_ito_process}
\end{table}

%
%
%

\appendix

\section{Appendix: Auxiliary Lemmas }
\label{sec:A}

We start with a simple lemma about the simulation of a r.v. $ X $ using only partial information of a previously simulated r.v. $ Y .$

\begin{proposition}
	\label{prop:forget_step}
	Let $X$ and $Y$ be two random variables. We want to simulate $X$. Let $A$ be a deterministic set. We consider the following algorithm :
	\begin{enumerate}
		\item Simulate Y.
		\item If $Y \in A$, simulate $X$ conditionally to the value of $Y$ in the previous step, and return $X$.
		\item If $Y \notin A$, simulate $X$ conditionally to the event $Y \notin A$, and return $X$.
	\end{enumerate}
	Let us denote $\widetilde{X}$ the value returned by the algorithm. Then:
	$$ \widetilde{X} \overset{\mathscr{L}}{=} X .$$
\end{proposition}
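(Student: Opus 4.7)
The plan is to compute the law of $\widetilde{X}$ directly by splitting on the two branches of the algorithm and showing that each contributes exactly the corresponding piece of the joint law of $(X,Y)$. Concretely, I would fix an arbitrary measurable set $B$ in the state space of $X$ and write
\begin{equation*}
\mathbb{P}(\widetilde{X}\in B) = \mathbb{P}(\widetilde{X}\in B,\, Y\in A) + \mathbb{P}(\widetilde{X}\in B,\, Y\notin A),
\end{equation*}
and aim to identify each summand with $\mathbb{P}(X\in B, Y\in A)$ and $\mathbb{P}(X\in B, Y\notin A)$ respectively, after which summing yields $\mathbb{P}(\widetilde{X}\in B)=\mathbb{P}(X\in B)$.

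For the first term, step 2 of the algorithm simulates a random variable whose conditional law given the already-drawn $Y$ is precisely the regular conditional distribution $\mathbb{P}(X\in \cdot\, |\, Y)$. Using the tower property, I would write
\begin{equation*}
\mathbb{P}(\widetilde{X}\in B,\, Y\in A) = \mathbb{E}\bigl[\mathbf{1}_{Y\in A}\,\mathbb{P}(X\in B\,|\,Y)\bigr] = \mathbb{P}(X\in B,\, Y\in A),
\end{equation*}
the last equality being the definition of the regular conditional distribution of $X$ given $Y$.

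For the second term, on the event $\{Y\notin A\}$ the algorithm forgets the actual value of $Y$ and draws $\widetilde{X}$ from the conditional law $\mathbb{P}(X\in \cdot \,|\, Y\notin A)$, independently of the value of $Y$ used in step 1. Therefore
\begin{equation*}
\mathbb{P}(\widetilde{X}\in B,\, Y\notin A) = \mathbb{P}(Y\notin A)\cdot \mathbb{P}(X\in B\,|\,Y\notin A) = \mathbb{P}(X\in B,\, Y\notin A),
\end{equation*}
which requires only the assumption $\mathbb{P}(Y\notin A)>0$ (the case $\mathbb{P}(Y\notin A)=0$ being trivial, as step 3 is then never executed).

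The only mild obstacle is formal: one must assume that a regular conditional distribution of $X$ given $Y$ exists (true e.g.\ when the target spaces are Polish), so that "simulate $X$ conditionally on the value of $Y$" in step 2 has an unambiguous meaning, and one must be careful that the randomness used in step 2 or step 3 is independent of $Y$ apart from the dependence through the prescribed conditional law. Once this is made explicit, the proof reduces to the two lines of conditioning above and the equality $\widetilde{X}\stackrel{\mathscr{L}}{=}X$ follows from summing the two identified contributions.
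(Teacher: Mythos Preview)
Your proof is correct and follows essentially the same approach as the paper: both split the law of $\widetilde{X}$ according to whether $Y\in A$ or $Y\notin A$, identify each piece with the corresponding joint probability for $(X,Y)$ via the definition of conditional distribution, and sum. Your added remarks on the existence of a regular conditional distribution and the degenerate case $\mathbb{P}(Y\notin A)=0$ are sensible technical clarifications that the paper omits.
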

\begin{proof}
	The random variable $\widetilde{X}$ is defined conditionally to $Y$ as:
	$$ \mathbb{P}(\widetilde{X} \in dx | \ Y=y ) = \left\lbrace \begin{array}{ll}
		\mathbb{P}(X \in dx | \ Y=y) & \text{ if } y \in A \\
		\mathbb{P}(X \in dx | \ Y \notin A) & \text{ if } y \notin A
	\end{array} \right. $$
	Then:
	\begin{align*}
		\mathbb{P}(\widetilde{X} \in dx) & = \int \mathbb{P}(\widetilde{X} \in dx | \ Y=y) \mathbb{P}(Y \in dy) \\
		& = \int_{y \in A} \mathbb{P}(X \in dx | \ Y=y) \mathbb{P}(Y \in dy) + \int_{y \notin A} \mathbb{P}(X \in dx | \ Y \notin A) \mathbb{P}(Y \in dy) \\
		& = \mathbb{P}(X \in dx , \ Y \in A) + \mathbb{P}(X \in dx| \ Y \notin A) \mathbb{P}(Y \notin A) = \mathbb{P}(X \in dx) .
	\end{align*}
\end{proof}
\subsection{Estimates on Bessel processes}

\begin{proposition}
	For all $x \ge 0$, we have:
	\begin{align}
		\label{majoration_I0}
	1\leq 	I_0(x) \le e^x .
	\end{align}
	Furthermore, for  $x \ge 0$ and $\nu \ge 0$, we have:
	\begin{align}
		\label{majoration_In} I_\nu(x) \le e^x + \frac{1}{\pi(\nu+x)} .
	\end{align}
\end{proposition}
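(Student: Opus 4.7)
The plan is to use the Schläfli integral representation of the modified Bessel function, which for $\nu \ge 0$ and $x \ge 0$ reads
\begin{equation*}
I_\nu(x) = \frac{1}{\pi}\int_0^\pi e^{x\cos\theta}\cos(\nu\theta)\,d\theta - \frac{\sin(\nu\pi)}{\pi}\int_0^\infty e^{-x\cosh t - \nu t}\,dt.
\end{equation*}
This representation is the main lever because both inequalities come from bounding the two integrals separately, and the trigonometric factors $\cos(\nu\theta)$ and $\sin(\nu\pi)$ are exactly what allow the extra $(\nu+x)^{-1}$ correction.

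For the first inequality, specialize to $\nu = 0$: the second term vanishes since $\sin(0)=0$, while the first integrand satisfies $e^{x\cos\theta}\le e^x$ because $\cos\theta\le 1$, giving $I_0(x)\le e^x$ immediately. For the second inequality, applied with general $\nu \ge 0$, I would bound the first term by the same argument (using $|\cos(\nu\theta)|\le 1$) to obtain $\frac{1}{\pi}\int_0^\pi e^{x\cos\theta}\cos(\nu\theta)d\theta \le e^x$. The second term I would bound using $|\sin(\nu\pi)|\le 1$ together with the elementary inequality $\cosh t \ge t$ for $t\ge 0$, which yields
\begin{equation*}
\int_0^\infty e^{-x\cosh t - \nu t}\,dt \;\le\; \int_0^\infty e^{-(x+\nu)t}\,dt \;=\; \frac{1}{x+\nu}.
\end{equation*}
Adding the two bounds gives $I_\nu(x)\le e^x + \frac{1}{\pi(\nu+x)}$, as desired.

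The only real step to double-check is $\cosh t\ge t$ on $[0,\infty)$: this follows from observing that $f(t) := \cosh t - t$ satisfies $f'(t)=\sinh t-1$, so $f$ has a unique minimum at $t_\star=\ln(1+\sqrt 2)$, at which $\cosh t_\star=\sqrt 2$, and thus $f(t_\star)=\sqrt 2-\ln(1+\sqrt 2)>0$. Since this is the only mildly nontrivial ingredient, I do not anticipate any serious obstacle; the main thing is just to pick the representation that isolates the $\sin(\nu\pi)$ prefactor, because a crude direct estimate on the series of $I_\nu$ would not recover the $(x+\nu)^{-1}$ decay. Edge cases are harmless: for $\nu=0$ the first inequality covers $x=0$ with equality, and for $\nu>0$ or $x>0$ the quantity $\nu+x$ in the denominator is strictly positive.
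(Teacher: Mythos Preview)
Your proof is correct and follows essentially the same route as the paper: both use the Schläfli integral representation, bound the first integral by $e^x$ via $\cos\theta\le 1$ and $|\cos(\nu\theta)|\le 1$, and bound the second integral using $\cosh t\ge t$ to reduce it to $\int_0^\infty e^{-(x+\nu)t}\,dt=(x+\nu)^{-1}$. Your explicit verification of $\cosh t\ge t$ is an extra detail the paper omits, but otherwise the arguments coincide.
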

\begin{proof}
	The inequality follows from the integral representation \cite[p.181]{watson_1944}:
	$$ I_\nu(x) = \frac{1}{\pi} \int_0^\pi e^{x \cos\theta} \cos(\nu \theta) d\theta - \frac{\sin(\nu \pi)}{\pi} \int_0^\infty e^{-x \cosh(t) - \nu t} dt .$$
\end{proof}


Now we provide two results on estimates related to the law of $F(T)=\int_0^T \frac{ds}{R_s^2}$.

\begin{proposition}
	\label{prop:35}
	Let $(R_t)_{t\geq 0}$ be a Bessel process of dimension $2$, or equivalently, of index $0$. Then 
	\begin{equation}
		\label{tail_bessel_integral}
		\mathbb{P}^{r_0}\left[ F(T)\ge x \right] \underset{x \rightarrow \infty}{\sim} \frac{1}{\sqrt{2}\Gamma(1/2)} \left(\int_{\frac{r_0^2}{2T}}^\infty \frac{e^{-u}}{u} du \right) x^{-1/2} .
	\end{equation}
\end{proposition}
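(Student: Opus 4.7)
The strategy is to compute the Laplace transform $\phi(\lambda) := \mathbb{E}^{r_0}[e^{-\lambda F(T)}]$ in closed form, extract its leading behaviour as $\lambda \to 0^+$, and invert via Karamata's Tauberian theorem applied to the survival function $\bar F(x) := \mathbb{P}^{r_0}(F(T)\ge x)$. The starting point is the classical absolute-continuity relation between the 2D Bessel (index $0$, the present law $\mathbb{P}^{r_0}$) and the Bessel of index $\nu > 0$ (whose law I denote $\mathbb{Q}^{(\nu)}_{r_0}$):
\[
\left.\frac{d\mathbb{Q}^{(\nu)}_{r_0}}{d\mathbb{P}^{r_0}}\right|_{\mathcal{F}_T} = \Bigl(\frac{R_T}{r_0}\Bigr)^{\nu}\exp\!\Bigl(-\tfrac{\nu^2}{2}F(T)\Bigr).
\]
This follows from Girsanov's theorem applied to the drift $\nu/R$, using the identity $\int_0^T R_s^{-1}\,dB_s = \log(R_T/r_0)$ coming from It\^o's formula on $\log R$ (as already observed in the proof of Lemma \ref{arctan_lemma}); it is valid globally because the 2D Bessel does not reach $0$.

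Setting $\nu = \sqrt{2\lambda}$ and taking expectations with test function $1$ gives
\[
\phi(\lambda) = \mathbb{E}^{\mathbb{Q}^{(\nu)}_{r_0}}\bigl[(r_0/R_T)^{\nu}\bigr] = \int_0^{\infty}\frac{s}{T}\,e^{-(r_0^2+s^2)/(2T)}\,I_{\nu}\!\Bigl(\frac{r_0 s}{T}\Bigr)\,ds,
\]
using the Bessel transition density of index $\nu$. Applying the standard Laplace identity (see e.g.\ \cite{Grad}, 6.631)
\[
\int_0^{\infty} x\,e^{-\alpha x^2}I_\nu(\beta x)\,dx = \frac{\beta^{\nu}\,\Gamma(1+\nu/2)}{2^{\nu+1}\alpha^{\nu/2+1}\Gamma(\nu+1)}\,{}_1F_1\!\Bigl(1+\tfrac{\nu}{2};\nu+1;\tfrac{\beta^2}{4\alpha}\Bigr)
\]
with $\alpha = 1/(2T),\,\beta = r_0/T$, followed by Kummer's transformation ${}_1F_1(a;b;z) = e^{z}\,{}_1F_1(b-a;b;-z)$, yields the closed form
\[
\phi(\lambda) = a^{\nu/2}\,\frac{\Gamma(1+\nu/2)}{\Gamma(1+\nu)}\,{}_1F_1\!\Bigl(\tfrac{\nu}{2};\nu+1;-a\Bigr),\qquad a := \frac{r_0^{\,2}}{2T}.
\]

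I then expand as $\nu\to 0^+$. Using $(\nu/2)_k = \tfrac{\nu}{2}(k-1)!+O(\nu^2)$ and $(\nu+1)_k = k!+O(\nu)$ for $k\ge 1$, the hypergeometric factor satisfies
\[
{}_1F_1\!\Bigl(\tfrac{\nu}{2};\nu+1;-a\Bigr) = 1 + \tfrac{\nu}{2}\sum_{k\ge 1}\frac{(-a)^k}{k\cdot k!} + O(\nu^2) = 1 - \tfrac{\nu}{2}\bigl[\gamma + \log a + E_1(a)\bigr] + O(\nu^2),
\]
where $E_1(a) = \int_a^{\infty}e^{-u}/u\,du$ and I used the series $E_1(a) = -\gamma - \log a - \sum_{k\ge 1}(-a)^k/(k\cdot k!)$. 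Combined with $a^{\nu/2} = 1 + \tfrac{\nu}{2}\log a + O(\nu^2)$ and $\Gamma(1+\nu/2)/\Gamma(1+\nu) = 1 + \tfrac{\gamma\nu}{2} + O(\nu^2)$, the $\gamma$ and $\log a$ contributions cancel exactly, leaving
\[
\phi(\lambda) = 1 - \tfrac{\nu}{2}\,E_1(a) + O(\nu^2) = 1 - \frac{E_1(r_0^{\,2}/(2T))}{\sqrt 2}\,\sqrt{\lambda} + O(\lambda),\quad \lambda \to 0^+.
\]

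Finally, since $\bar F$ is non-increasing on $[0,\infty)$ and
\[
\int_0^{\infty} e^{-\lambda x}\,\bar F(x)\,dx = \frac{1-\phi(\lambda)}{\lambda} \;\sim\; \frac{E_1(r_0^{\,2}/(2T))}{\sqrt 2}\,\lambda^{-1/2},\quad \lambda\to 0^+,
\]
Karamata's Tauberian theorem (in its monotone-density form) gives
\[
\bar F(x) \;\sim\; \frac{E_1(r_0^{\,2}/(2T))}{\sqrt 2\,\Gamma(1/2)}\,x^{-1/2},\quad x\to\infty,
\]
which is exactly \eqref{tail_bessel_integral}. The main obstacle is the analytic bookkeeping in the small-$\nu$ expansion of the preceding step: the $\log a$ term produced by the prefactor $a^{\nu/2}$ and the logarithmic singularity of ${}_1F_1(\nu/2;\nu+1;-a)$ at $\nu=0$ must cancel exactly, so that the only surviving piece of order $\nu$ is the regular one $E_1(a)$. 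This is where the exponential integral genuinely enters the asymptotic; once this cancellation is performed, the Tauberian inversion is immediate from monotonicity.
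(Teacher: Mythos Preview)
Your proof is correct and follows the same overall strategy as the paper: compute the Laplace transform $\phi(\lambda)=\mathbb{E}^{r_0}[e^{-\lambda F(T)}]$, extract the asymptotic $1-\phi(\lambda)\sim c\sqrt{\lambda}$ as $\lambda\to 0^+$, and invert via a Karamata Tauberian theorem. The difference lies only in how the Laplace transform is obtained and expanded. The paper quotes a formula from \cite{jeanblanc2010} that expresses $\mathbb{E}^{r_0}[e^{-\lambda^2 F(T)/2}]$ as an integral with a $\Gamma(\lambda/2)^{-1}$ prefactor, changes variables, and reads off the $O(\lambda)$ term by splitting the domain of integration and comparing with the definition of the $\Gamma$ function; the exponential integral appears directly as $\int_{r_0^2/(2T)}^\infty u^{-1}e^{-u}\,du$. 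You instead rederive the same Laplace transform from the Girsanov relation between Bessel laws of indices $0$ and $\nu$, evaluate the resulting Bessel integral via a Gradshteyn--Ryzhik identity and Kummer's transformation to obtain the closed form $a^{\nu/2}\,\frac{\Gamma(1+\nu/2)}{\Gamma(1+\nu)}\,{}_1F_1(\nu/2;\nu+1;-a)$, and then expand this using the series representation of $E_1$. Your route is slightly more self-contained (the change of measure replaces the textbook citation) and yields a neat closed form before the expansion, at the price of tracking a three-way cancellation between $\log a$, $\gamma$, and the hypergeometric term; the paper's route avoids special functions but leans on an external formula. Both reach the identical expansion $\phi(\lambda)=1-\tfrac{1}{\sqrt 2}E_1(r_0^2/(2T))\sqrt{\lambda}+O(\lambda)$ and conclude with the same Tauberian step.
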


\begin{proof}
	{\it Step 1: }We use \cite{jeanblanc2010}, Proposition 6.2.5.1, with $a=0$ and $\nu = 0$: 
	$$ 
	\mathbb{E}^{r_0}\left[ e^{-\frac{\lambda^2}{2} F(T)} \right] = \frac{r_0^\lambda}{\Gamma(\lambda/2)} \int_0^\infty v^{\frac{\lambda}{2}-1}(1+2vT)^{-(1+\lambda)}e^{-\frac{r_0^2v}{1+2vT}} dv.$$
	Performing the change of variable $w := \frac{Tv}{1+2vT}$, so that $v = \frac{w}{T(1-2w)}$, we have for all $\lambda >0$:
	\begin{align}
		\label{bessel_laplace_transform}
		\mathbb{E}^{r_0}\left[ e^{-\frac{\lambda^2}{2}F(T) } \right] = \frac{r_0^\lambda }{\Gamma(\lambda/2)T^{\lambda/2}} \int_0^{1/2} w^{\frac{\lambda}{2}-1}(1-2w)^{\lambda/2}e^{-r_0^2 w/T} dw.
	\end{align}
	
	{\it Step 2: }In fact, we prove the asymptotic expansion:
	\begin{align}
		\label{eq:exp1}
		\mathbb{E}^{r_0}\left[ e^{-\frac{\lambda^2}{2}F(T) } \right] \underset{\lambda \rightarrow 0^+}{=} 1 - \frac{\lambda}{2} \int_{\frac{r_0^2}{2T}}^\infty \frac{e^{-u}}{u} du \ + O(\lambda^2).
	\end{align}
	We have for all $x>0$, $\Gamma(x+1) = x\Gamma(x)$ and $\Gamma(1)=1$ so that
	$$ \Gamma(x) \underset{x \rightarrow 0^+}{\sim} \frac{1}{x}.$$
	Then, using \eqref{bessel_laplace_transform} and the change of variable $w = Tu/r_0^2$:
	$$ \mathbb{E}^{r_0}\left[ e^{-\frac{\lambda^2}{2}F(T)} \right] = \frac{1}{\Gamma\left(\frac{\lambda}{2}\right)} \int_0^{\frac{r_0^2}{2T}} u^{\frac{\lambda}{2}-1}\left(1-\frac{2Tu}{r_0^2}\right)^{\lambda/2} e^{-u} du .$$
	Using the definition of the Gamma function and decomposing the integral in two parts, we have
	$$1 - \mathbb{E}^{r_0}\left[ e^{-\frac{\lambda^2}{2} F(T)} \right] = \frac{1}{\Gamma\left(\frac{\lambda}{2}\right)} \left[ \int_0^{\frac{r_0^2}{2T}}u^{\frac{\lambda}{2}-1}e^{-u}\left(1-\left(1-\frac{2Tu}{r_0^2} \right)^{\lambda/2} \right)du + \int_{\frac{r_0^2}{2T}}^\infty u^{\frac{\lambda}{2}-1}e^{-u}du \right] =: I_1 + I_2.$$
	We use the expansion $ (1-x)^\alpha=\sum_{k=0}^\infty (\alpha \log(1-x))^k/k! $ for $ |x|<1 $ and $ \alpha\to 0 $ in order to analyze the first integral. This gives the following asymptotic equivalence
	\begin{align*}
		I_1 & \underset{\lambda \rightarrow 0^+}{=} 
		-\frac{\lambda^2}{4} \int_0^{\frac{r_0^2}{2T}} u^{-1}e^{-u}\log \left(1-\frac{2 Tu }{r_0^2}\right)du + o(\lambda^2) = O(\lambda^2).
	\end{align*}
	
	And on the other side:
	$$ I_2 = \frac{1}{\Gamma\left(\frac{\lambda}{2}\right)} \int_{\frac{r_0^2}{2T}}^\infty u^{\frac{\lambda}{2}-1}e^{-u}du \underset{\lambda \rightarrow 0^+}{=} \frac{\lambda}{2} \int_{\frac{r_0^2}{2T}}^\infty \frac{e^{-u}}{u} du \ + O(\lambda^2) .$$
	
	From here we obtain the asymptotic expansion \eqref{eq:exp1}.
	
	{\it Step 3: }
	We get \eqref{tail_bessel_integral} from the expansion \eqref{eq:exp1} and using Karamata Tauberian theorems (see \cite{embrechts1997}, Corollary A3.10):
	
	\begin{proposition}
	Let $df$ be a measure on $\mathbb{R}^+$, $F(x) := \int_0^x df$, $\hat{f}(\lambda) := \int_0^\infty e^{-\lambda x}F(x) dx$, $0 \le \gamma < 1$ and let $L>0$. Then the following are equivalent:
	\begin{enumerate}
		\item $ \displaystyle 1 - \hat{f}(\lambda) \sim L \lambda^\gamma , \ \ \lambda \rightarrow 0^+ $
		\item $ \displaystyle 1 - F(x) \sim \frac{L}{\Gamma(1-\gamma)}x^{-\gamma}, \ \ x \rightarrow \infty.$
	\end{enumerate}
	\end{proposition}

\end{proof}

Next we will give an estimate for $\mathbb{E}[F(T\wedge \zeta_\varepsilon)^p]$ in the case of the modified algorithm for reflected Brownian motion.

\begin{lemma}
	\label{majoration_bessel_expectation}
We define the stopping time $\zeta_\varepsilon := \inf \lbrace t \in [0,T]: \ R_t^2/(T-t) \le \varepsilon \rbrace$. Then we have the following bound for $ p\in (1,2) $ and $\varepsilon > 0$ small enough:
\begin{equation}
\mathbb{E}^{r_0} \left[F(T\wedge \zeta_\varepsilon)^p\right] \le C\left(1+ \log\left(\frac{2T}{r_0^2}\right) \right )\left({\varepsilon T} \right)^{1-p}.
\end{equation}
\end{lemma}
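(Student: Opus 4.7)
The plan is to reduce the bound to an explicit time-integral via H\"older's inequality, then exploit the 2D Bessel density together with the a.s. lower bound $R_s^2 > \varepsilon(T-s)$ which holds up to $T\wedge\zeta_\varepsilon$. The logarithmic factor $\log(2T/r_0^2)$ will arise from the 2D Bessel's logarithmic potential, via an integral of the form $\int_0^T e^{-r_0^2/(cs)}s^{-1}\,ds$.

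First I apply H\"older's inequality: since $p>1$,
\[
F(T\wedge\zeta_\varepsilon)^p = \left(\int_0^{T\wedge\zeta_\varepsilon}\frac{ds}{R_s^2}\right)^p \le (T\wedge\zeta_\varepsilon)^{p-1}\int_0^{T\wedge\zeta_\varepsilon}\frac{ds}{R_s^{2p}} \le T^{p-1}\int_0^{T\wedge\zeta_\varepsilon}\frac{ds}{R_s^{2p}}.
\]
Taking expectations and applying Fubini, together with the inclusion $\{s<T\wedge\zeta_\varepsilon\}\subset\{R_s^2>\varepsilon(T-s)\}$ coming from the definition of $\zeta_\varepsilon$ as an infimum, I obtain
\[
\mathbb{E}^{r_0}\!\left[F(T\wedge\zeta_\varepsilon)^p\right] \le T^{p-1}\int_0^T \mathbb{E}^{r_0}\!\left[R_s^{-2p}\mathds{1}_{R_s^2>\varepsilon(T-s)}\right]ds.
\]

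Next, using the explicit 2D Bessel density $p_s^{r_0}(r)=(r/s)e^{-(r^2+r_0^2)/(2s)}I_0(rr_0/s)$ together with the elementary bound $I_0(x)\le e^x$ from \eqref{majoration_I0}, the inner expectation is majorised by $s^{-1}\int_{\sqrt{\varepsilon(T-s)}}^\infty r^{1-2p}e^{-(r-r_0)^2/(2s)}\,dr$. Provided $\varepsilon$ is small enough that $\sqrt{\varepsilon T}\le r_0/2$, I split this $r$-integral at $r=r_0/2$. On the inner piece $[\sqrt{\varepsilon(T-s)},\,r_0/2]$, the bound $(r-r_0)^2\ge r_0^2/4$ pulls out the Gaussian factor $e^{-r_0^2/(8s)}$, after which the elementary estimate $\int_{\sqrt a}^{r_0/2}r^{1-2p}\,dr\le a^{1-p}/(2(p-1))$ (valid because $p>1$) isolates the $\varepsilon^{1-p}$ scaling. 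On the outer piece $[r_0/2,\infty)$, the decreasing power $r^{1-2p}$ is majorised by $(r_0/2)^{1-2p}$ while the Gaussian integrates to at most $\sqrt{2\pi s}$. Assembling the pieces with $a=\varepsilon(T-s)$,
\[
\int_0^T \mathbb{E}^{r_0}\!\left[R_s^{-2p}\mathds{1}_{R_s^2>\varepsilon(T-s)}\right]ds \le C_p\,\varepsilon^{1-p}\int_0^T \frac{(T-s)^{1-p}e^{-r_0^2/(8s)}}{s}\,ds + C_p\,r_0^{1-2p}\sqrt T,
\]
and the second term is $O(1)$ in $\varepsilon$, so absorbed by the first for $\varepsilon$ sufficiently small.

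Finally, the remaining integral is handled by splitting at $s=T/2$. On $[0,T/2]$, $(T-s)^{1-p}\le(T/2)^{1-p}$ and the change of variable $u=r_0^2/(8s)$ transforms $\int_0^{T/2}e^{-r_0^2/(8s)}s^{-1}\,ds$ into $\int_{r_0^2/(4T)}^\infty e^{-u}u^{-1}\,du$, producing the logarithmic factor $1+\log(8T/r_0^2)$ through the standard exponential integral estimate. On $[T/2,T]$, the exponential is bounded by $1$, $s^{-1}\le 2/T$, and the integrability of $(T-s)^{1-p}$ at $s=T$ (requiring $p<2$) gives a bounded remainder. The main obstacle is precisely this last bookkeeping: the hypothesis $p\in(1,2)$ is sharp at both ends, with $p>1$ required for the H\"older step and $p<2$ required for integrability of $(T-s)^{1-p}$ near $s=T$; the ``$\varepsilon$ small enough'' clause serves both to enforce $\sqrt{\varepsilon T}\le r_0/2$ in the splitting above and to absorb the non-$\varepsilon$ remainder into the main term.
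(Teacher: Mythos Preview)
Your proof is correct and uses the same essential ingredients as the paper: H\"older/Jensen to pass from $F^p$ to $\int R_s^{-2p}ds$, the explicit 2D Bessel density, the bound $I_0(x)\le e^x$, and the exponential-integral estimate $\int_a^\infty e^{-u}u^{-1}du \le e^{-1}+\log(1/a)$ to produce the logarithmic factor. The only structural difference is the decomposition of the double integral. The paper first swaps the order of integration via Fubini (doing the $t$-integral first), then splits the $r$-integral at $r=\sqrt{\varepsilon T}$ and further at $r=\sqrt{2T}+r_0$; the piece $r<\sqrt{\varepsilon T}$ is where $p<2$ is used. You instead keep the original order, split the $r$-integral at $r_0/2$, and then split the remaining $s$-integral at $T/2$; the piece $s\in[T/2,T]$ is where you use $p<2$. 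Both decompositions lead to the same $\varepsilon^{1-p}$ scaling. Your version has the minor advantage of making the H\"older factor $T^{p-1}$ explicit; the paper's first displayed inequality omits this factor, though this is harmless since the constant $C$ in the statement is allowed to depend on $T$.
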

\begin{proof}
By \cite[(6.2.3)]{jeanblanc2010}, the density of $R_t$ in $r$ is
\begin{equation}
\label{eq:bessel_density}
\frac{r}{t} \left(\frac{r}{r_0}\right)^\nu e^{-\frac{r^2 + r_0^2}{2t}} I_\nu\left( \frac{rr_0}{t} \right),
\end{equation}
where $\nu = \frac{d}{2}-1$ and $d$ is the dimension of the Bessel process. Taking $d=2$, we have:
\begin{align*}
& \mathbb{E}^{r_0} \left[F(T\wedge \zeta_\varepsilon) ^p\right] \le \int_0^T \mathbb{E}^{r_0} \left[ \frac{1}{R_t^{2p}} \mathds{1}_{R_t \ge \sqrt{\varepsilon (T-t)}} \right] dt = \int_0^T \int_{\sqrt{\varepsilon (T-t)}}^\infty \frac{1}{r^{2p-1}t}e^{-\frac{r^2+r_0^2}{2t}} I_0\left( \frac{rr_0}{t} \right) dr dt\\
& \quad = \int_{\sqrt{\varepsilon T}}^\infty \int_0^T \frac{1}{r^{2p-1}t}e^{-\frac{r^2+r_0^2}{2t}} I_0\left( \frac{rr_0}{t} \right) dt dr + \int_0^{\sqrt{\varepsilon  T}} \int_{T-r^2/\varepsilon}^T \frac{1}{r^{2p-1}t}e^{-\frac{r^2+r_0^2}{2t}} I_0\left( \frac{rr_0}{t} \right) dt dr =: I_1 + I_2 .
\end{align*}
Then, using \eqref{majoration_I0} for both $I_1$ and $I_2$:
\begin{align*}
I_1 \le& \int_{\sqrt{\varepsilon T}}^\infty \frac{1}{r^{2p-1}} \int_0^T \frac{1}{t}e^{-\frac{(r-r_0)^2}{2t}} dt dr =\int_{\sqrt{\varepsilon T}}^\infty \frac{1}{r^{2p-1}} \int_{\frac{(r-r_0)^2}{2T}}^\infty \frac{e^{-v}}{v} dv dr\\
= & \int_{\sqrt{\varepsilon T}}^{\sqrt{2T}+r_0} \frac{1}{r^{2p-1}} \int_{\frac{(r-r_0)^2}{2T}}^\infty \frac{e^{-v}}{v} dv dr + \int_{\sqrt{2T}+r_0}^\infty \frac{1}{r^{2p-1}} \int_{\frac{(r-r_0)^2}{2T}}^\infty \frac{e^{-v}}{v} dv dr.
\end{align*}
But we have, for all $a \ge 0$:
$$ \int_a^\infty \frac{e^{-x}}{x} dx \le \left\lbrace
\begin{array}{l}
e^{-a} \ \text{ if } a \ge 1, \\
e^{-1} + \log(1/a) \ \text{ if } a < 1.
\end{array} \right. $$
So that
$$ I_1 \le \int_{\sqrt{\varepsilon T}}^{\sqrt{2T}+r_0} \frac{dr}{r^{2p-1}}\left(e^{-1} + \log\left(\frac{2T}{(r-r_0)^2}\right) \right) + \int_{\sqrt{2T}+r_0}^\infty \frac{dr}{r^{2p-1}}e^{-\frac{(r-r_0)^2}{2T}} \underset{\varepsilon \rightarrow 0}{\sim} \frac{(\varepsilon T)^{1-p}}{2(p-1)}\left(e^{-1} + \log\left(\frac{2T}{r_0^2}\right)\right) . $$

For the second integral, we have:
\begin{align*}
I_2 & \le \int_0^{\sqrt{\varepsilon T}}  \frac{1}{r^{2p-1}} \int_{T-r^2/\varepsilon}^T e^{-\frac{(r-r_0)^2}{2t}} \frac{dt}{t} dr = \int_0^{\sqrt{\varepsilon  T}} \frac{1}{r^{2p-1}} \int_{\frac{(r-r_0)^2}{2T}}^{\frac{(r-r_0)^2}{2(T-r^2/\varepsilon  )}} \frac{e^{-v}}{v} dv dr \le \int_0^{\sqrt{\varepsilon T}} \frac{1}{r^{2p-1}} \int_{\frac{(r-r_0)^2}{2T}}^{\frac{(r-r_0)^2}{2(T-r^2/\varepsilon  )}} \frac{dv}{v} dr \\
& = -\int_0^{\sqrt{\varepsilon T}} \frac{1}{r^{2p-1}} \log\left(1-\frac{r^2}{\varepsilon  T}\right) dr = - \frac{1}{(\varepsilon T)^{p-1}}\int_0^1 \frac{1}{u^{2p-1}} \log(1-u^2) du,
\end{align*}
where the last integral converges as soon as $p<2$.
\end{proof}

\section{Proofs in Section \ref{section:density_formulas}}
\label{appendix:B}

\begin{proof}[Proof of Theorem \ref{Rformula_alpha_pi_m} in the case that $ \mathcal{D} =\langle \pi /m \rangle $]
	We apply the method of images in $\mathbb{R}^2$. Let $f : \mathcal{D} \rightarrow [0,+\infty)$ be a non-negative continuous function with compact support. The heat equation with boundary conditions
	\begin{equation}
		\label{reflected_pde}
		\begin{array}{ll}
			\partial_t u (t,x) = \frac{1}{2} \Delta u (t,x), \ & (t,x)\in [0,+\infty) \times \mathcal{D}  \\
			u(0,x) = f(x), \ & x\in \mathcal{D}\\
			\nabla u(t,x) \cdot n(x) = 0, \ & t > 0, \ x \in \partial \mathcal{D} , 
		\end{array}
	\end{equation}
	where $n(x)$ denotes the inward unitary orthogonal vector on the boundary,
	can be rewritten as a partial differential equation in polar coordinates $ (r,\theta) $ as described in \cite[Chapter 1]{Mazja}. Furthermore if we perform the change of variable $ z=\log(r) $, we obtain a parabolic problem with non-constant coefficients in a strip with mixed type boundary condition. The existence and uniqueness for this PDE is treated in \cite[chapter IV, Theorem 5.3]{Lady}. From this reference and the Feyman-Kac representation theorem we obtain that
	$$ u(t,x) := \mathbb{E}^{x}\left[f(X_{t})\right]$$ satisfies the heat equation on $\mathcal{D}$ described above.
	Now we define the function
	$$ \begin{array}{ll}
		\tilde{f} & : \mathbb{R}^2 \rightarrow \mathbb{R} \\
		y & \mapsto f(T_k^{-1}y) \ \text{ for } y \in \mathcal{D}_k,
	\end{array} $$
	and for $t \ge 0$ and $x \in \mathbb{R}^2$,
	$$ \tilde{u}(t,x) := \mathbb{E}^x\left[ \tilde{f}(W_t) \right] .$$
	Then $\tilde{u}$ satisfies the heat equation on $\mathbb{R}^2$ with $\tilde{f}$ as initial condition, so that
	$$ \tilde{u}(t,x) = \frac{1}{2\pi t} \int_{\mathbb{R}^2} \tilde{f}(y) e^{-\frac{|x-y|^2}{2t}} dy = \frac{1}{2\pi t} \sum_{k=0}^{2m-1} \int_{\mathcal{D}_k} \tilde{f}(y) e^{- \frac{|x-y|^2}{2t}} dy = \frac{1}{2\pi t}\int_{\mathcal{D}} f(y) \sum_{k=0}^{2m-1} e^{- \frac{|x-T_k y|^2}{2t}} dy. $$
	
	On the other hand, $u$ and $\tilde{u}$ satisfy the same boundary equation on $\mathcal{D}$. Indeed, for all $x \in \mathcal{D}$, $\tilde{f}(x) = f(x)$ so $u$ and $\tilde{u}$ satisfy the same initial conditions. For $x \in \partial\mathcal{D}^-$, $k=0, \ \ldots, \ 2m-1$ and $y \in \mathcal{D}$, we have
	\begin{align*}
		(x-T_k y)\cdot n(x) & = - (x-T_{_{2m-1-k}}y) \cdot n(x) , \\
		|x-T_k y|^2 & = |x-T_{_{2m-1-k}}y|^2 ,
	\end{align*}
	so that
	\begin{align*}
		\nabla \tilde{u}(t,x) \cdot n(x) =& - \frac{1}{2\pi t^2}\int_{\mathcal{D}} f(y) \sum_{k=0}^{2m-1} (x-T_ky)\cdot n(x)e^{- \frac{|x-T_k y|^2}{2t}} dy\\
		= & - \frac{1}{2 \pi t^2} \int_{\mathcal{D}} f(y) \sum_{k=0}^{m-1} (x-T_k y) \cdot n(x) e^{- \frac{|x-T_k y|^2}{2t}} dy\\& + \frac{1}{2 \pi t^2} \int_{\mathcal{D}} f(y) \sum_{k=0}^{m-1} (x - T_{_{2m-1-k}}y)\cdot n(x)  e^{- \frac{|x-T_{_{2m-1-k}} y|^2}{2t}} dy = 0.
	\end{align*}
	If $x \in \partial D^+$, we get the same result noting that for all $y \in \mathcal{D}$ and $k=0, \ \ldots, \ 2m-1$, 
	\begin{align*}
		(x-T_{_{(k+1) \text{ mod } 2m}} y)\cdot n(x) & = - (x-T_{_{(2m-k) \text{ mod } 2m}}y) \cdot n(x) , \\
		|x-T_{_{(k+1) \text{ mod } 2m}} y|^2 & = |x-T_{_{(2m-k) \text{ mod } 2m}}y|^2 ,
	\end{align*}
	So $u(t,x) = \tilde{u}(t,x)$ for all $t \ge 0$ and $x \in \mathcal{D}$, and
	$$ \mathbb{E}^x\left[f(X_t)\right] = \frac{1}{2\pi t}\int_{\mathcal{D}} f(y) \sum_{k=0}^{2m-1} e^{- \frac{|x-T_k y|^2}{2t}} dy .$$
\end{proof}

\begin{proof}[Proof of Theorem \ref{Rformula_alpha_pi_m} in the general case]

	As in the above proof we consider
	\begin{equation}
		\label{feynman_kac_representation}
		u(t,x) = \mathbb{E}^x\left[ f(X_t) \right] ,
	\end{equation}
	where $t \ge 0$, $x \in \mathcal{D}$, and $f : \mathcal{D} \rightarrow [0,+\infty)$ is a non-negative continuous function with compact support. Then $u$ is the solution of the partial differential equation \eqref{reflected_pde}.
	Considering the formula obtained for the case $\alpha = \pi/m$ above, we would like to express it so that it does not depend explicitly on $m$. We first assume that $\alpha = \pi/m$ for some $m \in \mathbb{N}$. In order to switch to polar coordinates, let $x = (r_0 \cos(\theta_0), r_0 \sin(\theta_0))$, $y = (r \cos(\theta), r \sin(\theta))$ and recall that $\vartheta_k$ for $0 \le k \le 2m-1$ is the angle of $T_k(y)$ in $[0,2\pi)$, i.e.
	$T_k(y) = (r \cos(\vartheta_k), r \sin(\vartheta_k))$ with $\vartheta_{2k} = 2k \alpha + \theta$ and $\vartheta_{2k+1} = 2(k+1) \alpha - \theta$.
	We then rewrite \eqref{alpha_pi_m_reflected}:
	\begin{align}
		\mathbb{P}^{x} (X_t \in dy) = \frac{r}{2\pi t} e^{-\frac{r^2 + r_0^2}{2t}} \sum_{k=0}^{2m-1} e^{\frac{rr_0}{t}\cos(\theta_0 - \vartheta_k)} dr d\theta.
		\label{eq:theta}
	\end{align}
	We use the following identity
	(see \cite[page 933, (8.511.4)]{Grad}), valid for $\gamma, z \ge 0 $:
	$$  \ e^{\gamma z} =I_0(z)+ 2 \sum_{n=1}^\infty T_n(\gamma) I_n(z), $$
	where $T_n$ is the $n^{\text{th}}$ Tchebychev's polynomial of the first kind and $I_n$ the modified Bessel function of the first kind with order $n$. Then the result \eqref{formula_stopped} follows because
	$ T_n(\cos(\theta)) = \cos(n\theta)$ and
	\begin{align*}
		\sum_{k=0}^{2m-1} \cos(n(\theta_0-\vartheta_k)) =
		\begin{cases}
			2m \cos(n \theta) \cos(n\theta_0)& \text{ if } n \text{ is a multiple of } m,\\
			0&\text{otherwise.}
		\end{cases}
	\end{align*}
	Using the above properties and $m = \pi/\alpha$, we obtain
	\begin{align}
	\label{eq:density_reflected_formula}
		\mathbb{P}^{x}(X_t \in dy) =& \frac{2r}{t \alpha} e^{-(r^2 + r_0^2)/(2t)}\left( \frac{1}{2}I_0\left(\frac{rr_0}{t}\right) + \sum_{n=1}^{\infty} I_{n\pi / \alpha}\left(\frac{rr_0}{t}\right) \cos\left(\frac{n\pi \theta}{\alpha}\right) \cos\left(\frac{n \pi \theta_0}{\alpha}\right) \right) dr d\theta ,
	\end{align}
	which gives a formula which does not depend on $m$. Now, we have to check that this formula is well defined for any $\alpha \in (0,\pi)$ and that it is the unique solution the partial differential equation \eqref{reflected_pde} with $ \mathcal{D}=\langle \alpha\rangle $.

	This is explained in Corollary \ref{cor:2}. In our current situation, $ d=2 $, the eigenvalues are $ \lambda_j:=\left(j\pi/\alpha\right)^2 $ for $ j \ge 0$ and the eigenfunctions are $ m_j(\theta)=\sqrt{2/\alpha}\cos\left(j\pi\theta/\alpha\right) $, $ j\geq 1 $ and $ m_0\equiv 1/\sqrt{\alpha} $.
	\end{proof}
	
	\begin{corollary}
		\label{cor:2}Consider a general $ d $-dimensional cone generated by all rays emanating from the origin and passing through a compact subset $ D\subset \mathbb{S}^{d-1} $ which has smooth boundary.  Consider $ X $ to be the normally reflected Brownian motion at the boundary of the cone. Then $ X_t $ has a density given by
		\begin{align}
			\label{Gformula_reflected}
			& \mathbb{P}^{x}(X_t \in dy) = \frac{r}{t (rr_0)^{d/2-1}} e^{-\frac{r^2 + r_0^2}{2t}} \left( I_{\alpha_0}\left(\frac{rr_0}{t}\right) m_0(\theta)m_0(\theta_0)+ \sum_{n=1}^{\infty} I_{\alpha_n}\left(\frac{rr_0}{t}\right) m_n(\theta)m_n(\theta_0) \right)dr d\theta . 
		\end{align}
	
	To explain the elements in the above formula, denote by 
	$L_{\mathcal{S}^{d-1}}$, the Laplace-Beltrami operator on $\mathbb{S}^{d-1}$. With the above assumptions, there exists  a complete set of orthonormal eigenfunctions $m_{j}$ with corresponding eigenvalues $0\leq \lambda_{0}<\lambda_{1} \leq \lambda_{2}<\ldots$ satisfying 
	$$
	\left\{\begin{array}{ll}
		L_{\mathcal{S}^{d-1}} m_{j}(x)=-\lambda_{j} m_{j}(x) & \text { for }x\in D \\
		\nabla m_{j}(x)\cdot n(x)=0 & \text { for } x\in \partial D ,
	\end{array}\right.
	$$
	$$
	\alpha_{j}=\sqrt{\lambda_{j}+\left(\frac{d}{2}-1\right)^{2}}.
	$$
		\end{corollary}
	\begin{proof}
		The beginning of the proof is the same as in the statements following \eqref{reflected_pde} in what refers to the existence and uniqueness of the associated PDE. Existence and uniqueness for the reflected process in the generalized cone can be deduced from \cite{bass1996} (see also \cite[Remark 4.1]{bass2005} and the references therein).
		
			In order to prove that \eqref{Gformula_reflected} satisfies the associated PDE, one follows a similar proof in \cite{Ban} for the killed case.	
		The proof in the reflected case follows line by line, the proof in \cite{Ban}, except that  the required estimates and properties of the eigenvalues and eigenfunctions of the Laplace-Beltrami operator in the case of Neumann boundary conditions have to be referred to the proper literature (for this see, \cite{greben}, \cite{Grieser} and \cite{kroger}) although the estimates do not change as far as it relates to the proof of \cite[Lemma 1]{Ban} with the corresponding corrections of typos. In order for the required estimates to be satisfied one needs that the generalized $ d$-dimensional cone is generated by all rays emanating from the origin and passing through a compact subset $ D\subset \mathbb{S}^{d-1} $ which has smooth boundary. 
		
		We also remark that the density expressions in \cite{Ban} are written under polar measures which explains why our expressions have an extra $ r $ which appears due to the Jacobian of the change of coordinates.
		\end{proof}

We also provide a full elementary proof that \eqref{eq:density_reflected_formula} satisfies the initial conditions in the Supplementary Material.

\section{A hint for higher dimensions}
\label{sec:higher_dim}

The following proposition is provided so as to hint at the possibilities in dimension higher than two. It shows that dimension two is the case which is mathematically difficult to treat.
\begin{proposition}
	\label{prop:15}
	Let $(R_t)_{t\geq 0}$ be a Bessel process in dimension $d \ge 3$ and let $F(T) = \int_0^t ds/R_s^2$. Then:
	$$ \mathbb{E}^{r_0}\left[ F(T)\right] < + \infty.$$
\end{proposition}
\begin{proof}
	The density of $R_t$ in $r$ is given in \eqref{eq:bessel_density}, so by performing the change of variable $r = ut$, we have
	$$ \mathbb{E}^{r_0}\left[ F(T)\right] = \int_0^T \int_0^\infty \frac{1}{rt} \left( \frac{r}{r_0} \right)^\nu e^{-\frac{r^2+r_0^2}{2t}} I_\nu\left( \frac{rr_0}{t} \right) dr dt  = \frac{1}{r_0^\nu} \int_0^T t^{\nu-1} \int_0^\infty I_\nu(ur_0)e^{-\frac{tu^2}{2} - \frac{r_0^2}{2t}}u^{\nu-1}du dt .$$
	Note that for all $\varepsilon > 0$,
	$$ \int_\varepsilon^T t^{\nu-1} \int_0^\infty I_\nu(ur_0)e^{-\frac{tu^2}{2} - \frac{r_0^2}{2t}}u^{\nu-1}du dt \le \left(\int_\varepsilon^T e^{-\frac{r_0^2}{2t}}t^{\nu-1}dt\right) \left( \int_0^\infty I_\nu(ur_0)e^{-\varepsilon u^2/2}u^{\nu-1}du\right) < \infty, $$
	where for we used the Proposition \ref{majoration_In} for the convergence of the second integral. So to prove the convergence of the integral, we only need to prove the convergence for the integral in $ t $ around zero. To do so we use Proposition \ref{majoration_In} again so that for $ \nu, \ r_0>0 $:
	$$ \mathbb{E}^{r_0}\left[ F(\varepsilon) \right] \le \frac{1}{r_0^\nu} \int_0^\varepsilon t^{\nu-1}e^{- \frac{r_0^2}{2t}} \int_0^\infty \left(e^{ur_0} + \frac{1}{\pi(\nu + ur_0)} \right) e^{-\frac{tu^2}{2} }u^{\nu-1}du dt <\infty.$$
	In fact, using that $\nu > 0$, we have
	\begin{align*} 
		I_1 & := \frac{1}{r_0^\nu} \int_0^\varepsilon t^{\nu-1} \int_0^\infty e^{-\frac{t}{2}\left(u-\frac{r_0}{t}\right)^2} u^{\nu-1} du dt \le \frac{1}{r_0^\nu} \int_0^\varepsilon t^{\nu-1} \int_{-\infty}^\infty e^{-\frac{t}{2}u^2} \left( |u| +\frac{r_0}{t} \right)^{\nu-1} du dt \\
		& = \frac{2}{r_0^\nu} \int_0^\varepsilon t^{\nu-3/2} \int_{0}^\infty e^{-u^2/2} \left(\frac{u}{\sqrt{t}} + \frac{r_0}{t} \right)^{\nu-1} du dt \le \frac{2}{r_0^\nu} \int_0^\varepsilon t^{\nu-3/2} \int_{0}^\infty e^{-u^2/2} \left(\frac{u+ r_0}{t} \right)^{\nu-1} du dt \\
		& = \frac{2}{r_0^\nu} \int_0^\varepsilon t^{-1/2} \int_{0}^\infty e^{-u^2/2} \left({u+ r_0} \right)^{\nu-1} du dt < \infty.
	\end{align*}
	As for the second integral, we have
	$$ I_2 := \frac{1}{r_0^\nu} \int_0^\varepsilon t^{\nu-1} \int_0^\infty \frac{1}{\pi(\nu + ur_0)} e^{-\frac{tu^2}{2} - \frac{r_0^2}{2t}}u^{\nu-1}du dt \le \frac{1}{r_0^\nu} \int_0^\varepsilon t^{\nu-1}e^{-\frac{r_0^2}{2t}} dt \int_0^\infty \frac{u^{\nu-1}}{\pi(\nu + ur_0)} du < \infty .$$
\end{proof}

\section*{Acknowledgements}
The first author thanks \'Ecole Normale Sup\'erieure, D\'epartement de Math\'ematiques et Applications for providing a financial support for a visit to Ritsumeikan University.

The second author is supported in part by KAKENHI 20K03666.

Both authors thank the anonymous referee for a careful review and for correcting mistakes and providing references.

\newpage

\section{Supplementary material}

\subsection{Proof of the convergence to the initial condition when $t \rightarrow 0$}

In this section, we prove the following result:

\begin{proposition}
\label{initial_conditions_proof}
The formula \eqref{formula_reflected} satisfies the initial conditions of the heat equation, i.e. for all functions $f : \mathcal{D} \rightarrow \mathbb{R}$ continuous with compact support, $r_0>0$ and $\theta_0 \in (0,\alpha)$:
\begin{equation}
\label{initial_conditions_convergence}
\int_{\mathcal{D}} \hat{f}(r,\theta) \frac{2r}{t \alpha} e^{-(r^2 + r_0^2)/2t} \left( \frac{1}{2}I_0\left(\frac{rr_0}{t}\right) + \sum_{n=1}^{\infty} I_{n\pi / \alpha}\left(\frac{rr_0}{t}\right) \cos\left(\frac{n\pi \theta}{\alpha}\right) \cos\left(\frac{n \pi \theta_0}{\alpha}\right)\right) dr d\theta \underset{t \rightarrow 0}{\longrightarrow} \hat{f}(r_0,\theta_0),
\end{equation}
where $\hat{f}: [0,\infty) \times [0,\alpha]\to\mathbb{R}$ denotes the function $f$ expressed in polar coordinates.
\end{proposition}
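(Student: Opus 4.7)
The plan is to decouple the radial and angular variables via the Fourier cosine expansion of $\hat{f}$ in $\theta$ on $[0,\alpha]$ (corresponding to Neumann boundary conditions), then identify each resulting one-dimensional radial integral as an expectation against a Bessel-process transition density, and finally pass to the limit using almost-sure continuity of Bessel processes at $t=0$.

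Concretely, I would write $\hat{f}(r,\theta)=\sum_{k\ge 0}a_k(r)\cos(k\pi\theta/\alpha)$ with $a_0(r) = \alpha^{-1}\int_0^\alpha \hat{f}(r,\theta')\,d\theta'$ and $a_k(r) = 2\alpha^{-1}\int_0^\alpha \hat{f}(r,\theta')\cos(k\pi\theta'/\alpha)\,d\theta'$ for $k\ge 1$. Using the orthogonality of $\{\cos(k\pi\theta/\alpha)\}_k$ on $[0,\alpha]$ and the term-by-term integrability justified in Proposition \ref{convergence_formula}, the left-hand side of \eqref{initial_conditions_convergence} collapses to
$$\sum_{k\ge 0}\cos(k\pi\theta_0/\alpha)\int_0^\infty a_k(r)\,\frac{r}{t}\,e^{-(r^2+r_0^2)/(2t)}\,I_{k\pi/\alpha}\!\left(\frac{rr_0}{t}\right)dr.$$
By \eqref{eq:bessel_density}, the radial factor equals $(r_0/r)^{k\pi/\alpha}\,q_t^{(k)}(r_0,r)$, where $q_t^{(k)}$ is the transition density of a Bessel process $R^{(k)}$ of index $k\pi/\alpha$ started at $r_0$; hence the $k$-th integral is $\mathbb{E}^{r_0}\bigl[a_k(R_t^{(k)})(r_0/R_t^{(k)})^{k\pi/\alpha}\bigr]$. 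Since $r_0>0$ and $R_t^{(k)}\to r_0$ almost surely as $t\downarrow 0$, dominated convergence yields termwise convergence to $a_k(r_0)$, and Fourier inversion then delivers $\sum_k a_k(r_0)\cos(k\pi\theta_0/\alpha)=\hat{f}(r_0,\theta_0)$.

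The principal obstacle is swapping $\lim_{t\downarrow 0}$ with the infinite sum over $k$, since $I_{k\pi/\alpha}(rr_0/t)$ blows up as $t\to 0$. I propose handling this by first establishing \eqref{initial_conditions_convergence} for $f\in C^\infty_c(\mathcal{D}\setminus\{0\})$ whose support avoids the two rays $\theta=0,\alpha$, so that the even extension in $\theta$ is smooth: repeated integration by parts in $\theta$ then forces $a_k(r)$ to decay faster than any polynomial in $k$, and combined with the Bessel estimates \eqref{bessel_asympt}--\eqref{I_nu_inequality} used in the proof of Proposition \ref{convergence_formula}, this yields a summable, $t$-uniform majorant legitimising dominated convergence. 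Extension to general continuous compactly supported $f$ is obtained by sup-norm approximation, noting that specialising the reduction to $f\equiv 1$ leaves only the $k=0$ mode and gives
$$\int_{\mathcal{D}}\frac{2r}{t\alpha}e^{-(r^2+r_0^2)/(2t)}\!\left(\frac{1}{2}I_0\!\left(\frac{rr_0}{t}\right)+\sum_{n\ge 1}I_{n\pi/\alpha}\!\left(\frac{rr_0}{t}\right)\cos(n\pi\theta/\alpha)\cos(n\pi\theta_0/\alpha)\right)dr\,d\theta = \int_0^\infty q_t^{(0)}(r_0,r)\,dr = 1$$
for every $t>0$, which supplies the uniform control needed to transfer convergence from the smooth class to the continuous one; the portion of $f$ supported near the origin or near the two boundary rays contributes negligibly in the limit thanks to the Gaussian factor $e^{-r_0^2/(2t)}$ and the continuity of $\hat{f}$ at $(r_0,\theta_0)$.
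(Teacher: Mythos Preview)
Your route is genuinely different from the paper's and conceptually more direct: the paper substitutes the Schl\"afli integral representation $I_\nu(x)=\pi^{-1}\int_0^\pi e^{x\cos u}\cos(\nu u)\,du-\pi^{-1}\sin(\nu\pi)\int_0^\infty e^{-x\cosh u-\nu u}\,du$, splits the left-hand side into two pieces $A_1,A_2$, sums the resulting trigonometric series in closed form to show $A_2\to 0$, and then handles $A_1$ by recognising two nested approximate identities (first in $r$, then in $u$) after a Fourier inversion in $\theta$. Your Bessel-semigroup interpretation bypasses all of this machinery.

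There is, however, a real gap at the pivotal step. You invoke ``the Bessel estimates \eqref{bessel_asympt}--\eqref{I_nu_inequality} used in the proof of Proposition \ref{convergence_formula}'' to manufacture a $t$-uniform summable majorant, but those estimates are applied in that proposition only for \emph{fixed} $t$: tracing them through produces factors of order $(e\,rr_0/(2t\nu))^\nu$, which blow up as $t\downarrow 0$ for each fixed $\nu$, so no uniform bound emerges and super-polynomial decay of $a_k$ cannot absorb them. The clean repair is the elementary monotonicity $I_\nu(x)\le I_0(x)$ for $\nu\ge 0$, $x>0$ (e.g.\ from $I_{\nu-1}-I_{\nu+1}=(2\nu/x)I_\nu>0$), which gives at once
\[
\Bigl|\int_0^\infty a_k(r)\,\tfrac{r}{t}\,e^{-(r^2+r_0^2)/(2t)}I_{k\pi/\alpha}\!\bigl(\tfrac{rr_0}{t}\bigr)\,dr\Bigr|
\;\le\;\|a_k\|_\infty\int_0^\infty \tfrac{r}{t}\,e^{-(r^2+r_0^2)/(2t)}I_0\!\bigl(\tfrac{rr_0}{t}\bigr)\,dr
\;=\;\|a_k\|_\infty,
\]
uniformly in $t$, after which your dominated-convergence argument for smooth $f$ goes through. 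Your extension to general continuous $f$ by sup-norm approximation also tacitly uses $\int_{\mathcal D}|p(t,x_0,y)|\,dy\le C$ uniformly in $t$, i.e.\ non-negativity of the kernel, which is part of what the proposition is meant to establish; you can avoid the circularity by noting that once the initial condition holds on a dense class of test functions, PDE uniqueness (already proved in Proposition \ref{prop:2}) identifies the formula with $\mathbb E^{x_0}[f(X_t)]$ on that class and hence, by continuity of both sides as linear functionals on $C_c$ for fixed $t$, on all of $C_c$ --- whereupon the $t\downarrow 0$ limit for arbitrary $f$ follows from almost-sure continuity of $X$.
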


\medskip

For the proof, we use the following representation: 
\begin{align*}
\forall x >0, \ \forall \alpha \ge 0, \ I_\alpha(x) = \frac{1}{\pi} \int_0^\pi e^{x\cos(u)}\cos(\alpha u)du - \frac{\sin(\alpha \pi)}{\pi} \int_0^\infty e^{-x \cosh(u) - \alpha u} du .
\end{align*}
Let us denote by $A_1$ and $A_2$ the two terms appearing in the integral representation in \eqref{initial_conditions_convergence} after replacing with the above formula, with $d_n=\frac{1}{2}$ if $n=0$ and $d_n=1$ for $ n\geq 1 $:
\begin{align*}
A_1 & := \frac{2}{\alpha \pi} \int_{\mathcal{D}} \frac{r}{t} \hat{f}(r,\theta) e^{-\frac{r^2+r_0^2}{2t}} \sum_{n=0}^{\infty} d_n \cos\left(\frac{n\pi\theta}{\alpha}\right) \cos\left(\frac{n\pi\theta_0}{\alpha}\right) \int_{0}^\pi e^{\frac{rr_0}{t}\cos(u)}\cos\left(\frac{n\pi}{\alpha}u\right)du dr d\theta, \\
A_2 & := -\frac{2}{\alpha \pi}\int_{\mathcal{D}} \frac{r}{t} \hat{f}(r,\theta) e^{-\frac{r^2 + r_0^2}{2t}} \sum_{n=1}^\infty \sin\left(\frac{n\pi^2}{\alpha}\right) \cos\left(\frac{n\pi\theta}{\alpha}\right) \cos\left(\frac{n\pi\theta_0}{\alpha}\right) \int_0^\infty e^{-\frac{rr_0}{t}\cosh(u) - \frac{n\pi}{\alpha}u}du dr d\theta.
\end{align*}

\medskip

\begin{proposition}
	We have $A_2 \underset{t \rightarrow 0}{\longrightarrow} 0 $.
\end{proposition}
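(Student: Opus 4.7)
The strategy is to exploit the decay factor $e^{-rr_0\cosh(u)/t}$ to show that $|A_2|$ decays exponentially in $1/t$ as $t\to 0^+$. First, using the absolute convergence arguments from Proposition \ref{convergence_formula} (together with the geometric bound $|\sin(n\pi^2/\alpha)\cos(n\pi\theta/\alpha)\cos(n\pi\theta_0/\alpha)|\,e^{-n\pi u/\alpha}\le e^{-n\pi u/\alpha}$ to justify Fubini), I would exchange summation and the $u$-integral to rewrite
\[
A_2 = -\frac{2}{\alpha\pi}\int_{\mathcal D}\frac{r}{t}\,\hat f(r,\theta)\,e^{-\frac{r^2+r_0^2}{2t}}\,T(r,\theta,\theta_0,t)\,dr\,d\theta,
\]
where, setting $q=e^{-\pi u/\alpha}$,
\[
T(r,\theta,\theta_0,t) := \int_0^\infty e^{-\frac{rr_0}{t}\cosh u}\,S(u,\theta,\theta_0)\,du,
\]
\[
S(u,\theta,\theta_0) := \sum_{n\ge 1}\sin\!\left(\frac{n\pi^2}{\alpha}\right)\cos\!\left(\frac{n\pi\theta}{\alpha}\right)\cos\!\left(\frac{n\pi\theta_0}{\alpha}\right) q^n.
\]
Applying product-to-sum formulas and the elementary identity $\sum_{n\ge 1}\sin(n\beta)q^n = q\sin\beta/(1-2q\cos\beta+q^2)$, one obtains the closed form $S = \frac14\sum_{i=1}^{4}\frac{q\sin\beta_i}{1-2q\cos\beta_i+q^2}$, where $\beta_i$ runs over the four angles $\pi(\pi\pm\theta\pm\theta_0)/\alpha$.

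The crucial step is to prove that $|T(r,\theta,\theta_0,t)|\le C\,e^{-rr_0/t}$ for a constant $C$ that is \emph{independent of} $\theta$. For each $i$, writing $a:=|\sin(\beta_i/2)|$ and using $1-2q\cos\beta_i+q^2 = (1-q)^2+4q\sin^2(\beta_i/2)$ together with $|\sin\beta_i|\le 2a$, I would derive
\[
\left|\frac{q\sin\beta_i}{1-2q\cos\beta_i+q^2}\right|\le \min\!\left(\frac{2a}{(1-q)^2},\;\frac{1}{2a}\right).
\]
The two bounds cross at $1-q=2a$, i.e.\ at $u_*:=(\alpha/\pi)\log(1/(1-2a))$. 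I would split the $u$-integration: on $[0,u_*]$ use the bound $1/(2a)$, which yields a contribution bounded by $(u_*/(2a))\,e^{-rr_0/t}$; for small $a$ this is $\le (\alpha/\pi)\,e^{-rr_0/t}$, and for $a\ge 1/2$ the minimum is always $1/(2a)\le 1$, handled directly via $K_0(rr_0/t)$. On $[u_*,\alpha/\pi]$ use $2a/(1-q)^2\le 8a\alpha^2/(\pi u)^2$, thanks to $1-e^{-x}\ge x/2$ on $[0,1]$; this integrates to another constant multiple of $e^{-rr_0/t}$, the factor $a$ being precisely cancelled by $1/u_*$. On $[\alpha/\pi,\infty)$ the denominator is bounded below by $(1-e^{-1})^2$, and the super-exponential decay of $\cosh u$ together with $K_0(rr_0/t) = O(\sqrt{t/(rr_0)}\,e^{-rr_0/t})$ provides an even better contribution. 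Summing the four terms gives the claimed uniform estimate.

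The main obstacle is precisely this uniform bound. Individually, each kernel $(q\sin\beta_i)/(1-2q\cos\beta_i+q^2)$ tends to $\frac12\cot(\beta_i/2)$ as $u\to 0^+$, and so blows up whenever $\beta_i$ approaches $2\pi\mathbb Z$ (a finite set of exceptional $\theta$ once $\theta_0\in(0,\alpha)$ is fixed). No pointwise estimate on $S$ can therefore be uniform in $\theta$; the key observation is that the integration against $e^{-rr_0\cosh u/t}$ averages out this singularity, in the same way that an integrated Poisson kernel $\int \epsilon/(\epsilon^2+s^2)\,ds$ is bounded uniformly in $\epsilon$.

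Once $|T|\le C\,e^{-rr_0/t}$ is established, the proof concludes at once. Since $\hat f$ has compact support, say in $\{r\le R\}\times[0,\alpha]$, and $(r+r_0)^2\ge r_0^2$, we obtain
\[
|A_2|\le \frac{C'}{t}\,e^{-r_0^2/(2t)}\int_{\mathcal D} r\,|\hat f(r,\theta)|\,dr\,d\theta,
\]
which tends to $0$ exponentially fast as $t\to 0^+$, thanks to the hypothesis $r_0>0$.
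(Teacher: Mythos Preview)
Your main estimate---bounding the $u$-integral uniformly in $\theta$ via the $\theta$-dependent split at $u_*\asymp|\sin(\beta_i/2)|$---is correct and takes a different route from the paper. Both arguments sum the series to the same Poisson-kernel closed form; they diverge in how to neutralise the singularity as $u\to 0$, $\beta_i\to 2\pi\mathbb Z$. The paper simply integrates in $(\theta,u)$ jointly over $[0,\alpha]\times[0,1]$: since the kernel behaves like $a/(a^2+b^2)$ it is integrable in two dimensions, the factor $e^{-rr_0\cosh u/t}$ is discarded on that region, and the remaining $\frac1t\int r\,e^{-(r^2+r_0^2)/(2t)}\,dr=e^{-r_0^2/(2t)}$ does the work. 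Your route integrates only in $u$ and so needs the more careful case analysis; it buys a sharper intermediate bound at the cost of extra work.

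There is, however, a genuine gap before you reach this estimate. You propose to interchange $\sum_n$ and $\int_0^\infty du$ via Fubini using the majorant $e^{-n\pi u/\alpha}$, but
\[
\sum_{n\ge 1}\int_0^\infty e^{-\frac{rr_0}{t}\cosh u}\,e^{-n\pi u/\alpha}\,du
=\int_0^\infty \frac{e^{-\frac{rr_0}{t}\cosh u}}{e^{\pi u/\alpha}-1}\,du
\]
diverges logarithmically at $u=0$, so Fubini does not apply. The oscillation of the factors $\sin(n\pi^2/\alpha)\cos(n\pi\theta/\alpha)\cos(n\pi\theta_0/\alpha)$ is essential for the interchange, not merely the decay $e^{-n\pi u/\alpha}$. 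This is precisely why the paper devotes a separate step to the interchange: it writes the tail $g_N(a,b)=\sum_{n\ge N}\sin(na)e^{-nb}$ in closed form and shows that the corresponding remainder integral tends to zero as $N\to\infty$. You need some such argument in place of the Fubini appeal. (A smaller point: your claim $|T|\le C\,e^{-rr_0/t}$ is not literally uniform in $r$, since the large-$u$ piece and the ``$a\ge 1/2$ via $K_0$'' case carry an extra factor like $\sqrt{t/(rr_0)}$---recall $K_0(x)\sim-\log x$ as $x\to 0$---but this is harmless after integrating against $r|\hat f|$.)
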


\begin{proof}
	{\it Step 1:} For the moment let us assume that the integration order can be switched. For all $a\in\mathbb{R}$ and $b >0$, we have:
	\begin{equation}
	\label{complex_formula}
	g(a,b) := \sum_{n=0}^\infty \sin(na)e^{-nb} = \Im\left( \sum_{n=0}^\infty e^{ina-nb}\right) = \frac{e^{-b}\sin(a)}{\left(1-e^{-b}\cos(a)\right)^2+e^{-2b}\sin^2(a)} .
	\end{equation}
	We use the following trigonometric identity written in compact form
	\begin{align}
	\label{trigonometric_formula}
	4\cos(a)\cos(b)\sin(c)=&\sin(a+b+c)+\sin(c-a-b)+\sin(a-b+c)+\sin(c-a+b)\\
	A_{jk}(\theta):=&\frac{\pi}{\alpha}((-1)^j\theta +(-1)^k \theta_0 + \pi), \nonumber
	\end{align} 
	$$ \sum_{n=1}^\infty \cos\left(\frac{n\pi\theta}{\alpha}\right) \cos\left(\frac{n\pi\theta_0}{\alpha}\right) \sin\left(\frac{n\pi^2}{\alpha}\right) e^{-\frac{n\pi}{\alpha}u} = \frac{1}{4}\sum_{j,k=0}^1 g\left(A_{jk}(\theta),\frac{\pi}{\alpha}u\right). $$
	
	Now we prove that for all $j,k \in \{0,1\}$, we have:
	$$ J_{j,k} := \frac{1}{t} \int_{\mathcal{D}} r \hat{f}(r,\theta) e^{-\frac{r^2+r_0^2}{2t}}\int_0^\infty e^{-\frac{rr_0}{t}\cosh(u)} g\left(A_{jk}(\theta),\frac{\pi}{\alpha}u\right) du d\theta dr \underset{t \rightarrow 0}{\longrightarrow} 0.$$
	We will do the analysis of $ J_{j,k} $ dividing the integration region in two: $ J_{j,k}^1 $ which comprises the integral on the region $ \mathcal{D}\times (1,\infty) $ and the remaining which is denoted by $ J_{j,k}^2 $.
	With this in mind, note that $g$ is continuous on $ [0,\infty) \times (0,\alpha)$ and is locally integrable in $(0,0)$. In fact, for all $a \in \mathbb{R}$ and $b >0$:
\begin{align}
\label{eq:21.1}
	g(a,b) = \frac{e^{-b}\sin(a)}{\left(1-e^{-b}\cos(a)\right)^2+e^{-2b}\sin^2(a)} \underset{a,b \rightarrow 0}{\sim} \frac{a}{\left((1-e^{-b}\left(1-\frac{a^2}{2}\right)\right)^2 + a^2} \sim \frac{a}{a^2 + b^2}.
	\end{align}
	Moreover,
	$$  \int_{[0,1]^2}\frac{a}{a^2 + b^2} da db = \int_{0}^1 \left[ \frac{1}{2}\log(a^2 + b^2) \right]_{a=0}^{a=1} db = \frac{1}{2}\int_0^1 \left(\log(1+b^2) - \log(b^2)\right) db < \infty .$$
	 Next, note that $\partial_b g(a,b)$ is negative, so
	$$ \forall u \ge 1, \ \forall \theta \in \mathbb{R}, \ \left|g\left(A_{jk}(\theta),\frac{\pi}{\alpha}u\right)\right| \le \frac{e^{-\frac{\pi}{\alpha}}}{(1-e^{\frac{\pi}{\alpha}})^2}, $$
	and since $u \mapsto u^2/\cosh(u)$ is non-negative and bounded above, there exists $B>0$ such that
\begin{align*}
|J^1_{jk}| \le \frac{C_1}{t} \int_{0}^\infty re^{-\frac{r^2+r_0^2}{2t}} \int_{1}^\infty e^{-B\frac{rr_0}{t}u^2} du dr \le \frac{C_2}{\sqrt{tr_0}} e^{-\frac{r_0^2}{2t}} (2t)^{3/4} \int_0^\infty \sqrt{r'}e^{-r'^2}dr' \underset{t \rightarrow 0}{\longrightarrow} 0 .
\end{align*}
	
	On the other hand, using \eqref{eq:21.1} and the continuity of $ g $ we obtain
	$$ |J^2_{jk}| \le ||f||_\infty \frac{1}{t} \int_{0}^\infty re^{-\frac{r^2+r_0^2}{2t}}dr \cdot \int_{0}^1\int_{0}^\alpha \left|g\left(A_{jk}(\theta),\frac{\pi}{\alpha}u \right) \right| d\theta du \underset{t \rightarrow 0}{\longrightarrow} 0. $$
	So that
	$$|A_2| \le \frac{1}{2 \alpha \pi} \sum_{j,k=0}^1 |J_{jk}| \underset{t \rightarrow 0}{\longrightarrow} 0. $$
	
	{\it Step 2:} Now, {we prove that one can interchange the order of the integrals and sum in $A_2$}. Note that Fubini's theorem does not apply here because of the factors $\cos(n\pi\theta/\alpha)$ and $\cos(n\pi\theta_0/\alpha)$. A computation similar to \eqref{complex_formula} leads to, for all $N \in \mathbb{N}$:
	$$ g_N(a,b) := \sum_{n=N}^\infty \sin(na)e^{-nb} = \Im \left( \sum_{n=N}^\infty e^{-nb + ina} \right) = e^{-Nb} \frac{\sin(Na) - e^{-b}\sin((N-1)a)}{(1-e^{-b}\cos(a))^2 + e^{-2b}\sin^2(a)}.$$
	
	Next, we prove that, for all $j,k \in \{0,1\}$ and for all $t>0$, denoting $R^N_{jk}$ the difference between the infinite sum and the partial sum up in $A_2$ to $N$,
	$$ R^N_{jk} := \frac{1}{t} \int_{\mathcal{D}} re^{-\frac{r^2+r_0^2}{2t}}\hat{f}(r,\theta) \int_0^\infty e^{-\frac{rr_0}{t}\cosh(u)} g_N\left(A_{jk}(\theta),\frac{\pi}{\alpha}u\right) du d\theta dr \underset{N \rightarrow \infty}{\longrightarrow} 0.$$
	
	Let us remark that
	$$ e^{Nb}g_N(a,b) = \frac{\sin(Na) - e^{-b}\sin((N-1)a)}{(1-e^{-b}\cos(a))^2 + e^{-2b}\sin^2(a)} \underset{a,b \rightarrow 0}{\sim} \frac{a}{a^2 + b^2},$$
	so that $(a,b) \mapsto e^{Nb}g_N(a,b)$ is integrable in $(0,0)$. We denote $R^{N,1}_{jk}$ and $R^{N,2}_{jk}$ the two terms obtained after splitting the integral with respect to $u$ on $(0,1)$ and $(1,\infty)$ respectively.
	We have then
	$$  |R^{N,2}_{jk}| \le \frac{1}{t} ||f||_\infty \int_{0}^\infty re^{-\frac{r^2+r_0^2}{2t}}dr \int_{0}^1 e^{-\frac{N\pi u}{\alpha}} \int_{0}^\alpha e^{\frac{N\pi u}{\alpha}} \left|g_N\left(A_{jk}(\theta),\frac{\pi}{\alpha}u\right)\right| d\theta du \underset{N \rightarrow \infty}{\longrightarrow}0, $$
	where we use Lemma \ref{integral_times_exponential} for the above convergence.
	Moreover for all $u \ge 1, \ \theta \in \mathbb{R}$,
	$$ |g_N(A_{jk}(\theta),\frac{\pi}{\alpha}u)| \le e^{-N\frac{\pi}{\alpha}u}\frac{2}{(1-e^{-\frac{\pi}{\alpha}})^2},$$
	so we have 
	$$  |R^{N,1}_{jk}| \le \frac{1}{t} ||f||_\infty \int_{0}^\infty re^{-\frac{r^2+r_0^2}{2t}}dr \int_{1}^\infty e^{-N\frac{\pi}{\alpha}u} \frac{2}{(1-e^{-\frac{\pi}{\alpha}})^2}du \underset{N \rightarrow \infty}{\longrightarrow} 0. $$
From the above arguments we obtain the conclusion:	$  R^N_{jk} \rightarrow 0$ for all $j,k \in \lbrace 0,1 \rbrace$.
	
\end{proof}

\begin{lemma}
	\label{integral_times_exponential}
	Let $b >0$ and let $f \in L^1((0,b))$ be non-negative and continuous. Then:
	$$ \int_0^b e^{-Nx}f(x)dx \underset{N \rightarrow \infty}{\longrightarrow} 0. $$
\end{lemma}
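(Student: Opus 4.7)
The plan is to apply the dominated convergence theorem. First, observe that for every $x \in (0,b)$ we have $e^{-Nx} f(x) \to 0$ as $N \to \infty$, so the integrand converges pointwise to $0$ on $(0,b)$. Second, since $e^{-Nx} \le 1$ for all $x \ge 0$ and $N \ge 0$, we have the uniform domination
\[
0 \le e^{-Nx} f(x) \le f(x),
\]
where $f \in L^1((0,b))$ by hypothesis. The dominated convergence theorem then yields $\int_0^b e^{-Nx} f(x)\, dx \to 0$.

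As a sanity check, one could also give a direct $\varepsilon$-argument without invoking dominated convergence: fix $\varepsilon>0$, use the absolute continuity of the Lebesgue integral to choose $\delta \in (0,b)$ so that $\int_0^\delta f(x)\, dx < \varepsilon/2$, and then bound
\[
\int_0^b e^{-Nx} f(x)\, dx \le \int_0^\delta f(x)\, dx + e^{-N\delta}\int_\delta^b f(x)\, dx < \frac{\varepsilon}{2} + e^{-N\delta}\, \|f\|_{L^1(0,b)}.
\]
The second term is less than $\varepsilon/2$ for $N$ large enough, giving the claim.

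There is no real obstacle: both the continuity and non-negativity hypotheses are more than needed (any $f \in L^1((0,b))$ would suffice), and the result follows immediately from standard measure-theoretic tools.
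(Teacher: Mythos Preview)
Your proposal is correct. Your primary argument via dominated convergence is not the route the paper takes, but your ``sanity check'' alternative is exactly the paper's proof: split the integral at a small $\delta$, use absolute continuity of the integral (equivalently, $f\in L^1$) to make $\int_0^\delta f$ small, and bound the remainder by $e^{-N\delta}\|f\|_1$. The dominated convergence approach is arguably cleaner and, as you note, shows that neither continuity nor non-negativity is actually needed; the splitting argument has the minor advantage of being entirely elementary and giving an explicit rate once $\delta$ is fixed.
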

\begin{proof}
	Let $\varepsilon >0$ and choose $\delta >0$ such that $\int_0^\delta f(x)dx \le \varepsilon$. Then for $N$ big enough:
	$$ \int_0^b e^{-Nx}f(x)dx \le \int_0^\delta f(x)dx + e^{-N\delta} ||f||_1 \le 2\varepsilon. $$
\end{proof}

\medskip

\begin{proposition}
We have $A_1 \underset{t\rightarrow 0}{\longrightarrow} \hat{f}(r_0,\theta_0)$.
\end{proposition}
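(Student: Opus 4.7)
The plan is to rewrite $A_1$ as the integral of a continuous bounded extension of $\hat f$ against a two-dimensional Gaussian kernel centered at $x_0=(r_0\cos\theta_0,r_0\sin\theta_0)$, so that the desired convergence reduces to the classical convergence of the two-dimensional heat kernel to the identity. Let $F(r,\cdot)$ denote the even, $2\alpha$-periodic extension of $\hat f(r,\cdot)$ to $\mathbb{R}$; it is continuous on $[0,\infty)\times\mathbb{R}$ and bounded by $\|f\|_\infty$, with $F(r,\theta)=\hat f(r,\theta)$ for $\theta\in[0,\alpha]$.

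\textbf{Computation.} Denote $c_n(r):=\int_0^\alpha \hat f(r,\theta)\cos(n\pi\theta/\alpha)\,d\theta$. First, interchange the $\theta$-integral in $A_1$ with the summation over $n$; this is justified for smooth $f$ by the rapid decay of $c_n(r)$ and for merely continuous $f$ by a density argument. The product-to-sum identity $\cos(n\pi\theta_0/\alpha)\cos(n\pi u/\alpha)=\tfrac12[\cos(n\pi(\theta_0+u)/\alpha)+\cos(n\pi(\theta_0-u)/\alpha)]$ combined with the Fourier cosine reconstruction $F(r,\phi)=\tfrac{2}{\alpha}\sum_{n\geq 0} d_n c_n(r)\cos(n\pi\phi/\alpha)$ valid for all $\phi\in\mathbb{R}$ yields
\begin{equation*}
\sum_{n\geq 0} d_n c_n(r)\cos\!\bigl(\tfrac{n\pi\theta_0}{\alpha}\bigr)\cos\!\bigl(\tfrac{n\pi u}{\alpha}\bigr)=\tfrac{\alpha}{4}\bigl[F(r,\theta_0+u)+F(r,\theta_0-u)\bigr].
\end{equation*}
Substituting this into $A_1$ and using the change of variable $u\mapsto -u$ in the second piece, together with the evenness of $\cos u$, to merge the two halves into a single integral over $u\in(-\pi,\pi)$,
\begin{equation*}
A_1=\int_0^\infty\!\int_{-\pi}^{\pi}\frac{r}{2\pi t}\,e^{-(r^2+r_0^2-2rr_0\cos u)/(2t)}\,F(r,\theta_0+u)\,du\,dr.
\end{equation*}

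\textbf{Conclusion and main obstacle.} The right-hand side equals $\mathbb{E}[g(X)]$, where $X\sim\mathcal{N}(x_0,tI_2)$ and $g:\mathbb{R}^2\setminus\{0\}\to\mathbb{R}$ is defined by $g(y)=F(|y|,\arg y)$ using the branch $\arg y\in(\theta_0-\pi,\theta_0+\pi)$; this $g$ is bounded by $\|f\|_\infty$ and continuous at $x_0$, with $g(x_0)=F(r_0,\theta_0)=\hat f(r_0,\theta_0)$ since $\theta_0\in(0,\alpha)$. As $t\to 0$, $X\to x_0$ almost surely, so dominated convergence yields $A_1\to\hat f(r_0,\theta_0)$. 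The principal technical obstacle is the rigorous justification of the Fourier-inversion step when $f$ is merely continuous, since the cosine series need not converge pointwise. This I would handle by uniformly approximating $f$ by smooth compactly supported functions $f_\varepsilon$, applying the argument above for each $f_\varepsilon$, and passing to the limit using the linearity of $A_1$ in $f$ together with the uniform estimate $|A_1^{(g)}|\leq\|g\|_\infty$ (inherited from the Gaussian integral representation once established by continuity from the smooth case). A secondary point is the interchange of sum and $u$-integral, which is handled by truncating at level $N$ and controlling the tail via the Bessel-function bounds derived in Proposition \ref{convergence_formula}, in the spirit of the treatment of $A_2$.
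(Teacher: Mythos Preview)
Your approach is correct and takes a genuinely different, cleaner route than the paper. Both you and the paper begin with the same Fourier step: using the cosine reconstruction of the even $2\alpha$-periodic extension $F$ to collapse the $\theta$-integral and the $n$-sum into $\tfrac{\alpha}{4}\bigl[F(r,\theta_0+u)+F(r,\theta_0-u)\bigr]$. From there the paper proceeds by two \emph{sequential} approximation-to-identity arguments: first it lets $t\to 0$ in the $r$-integral (the factor $t^{-1/2}r\,e^{-(r-r_0\cos u)^2/(2t)}$ concentrates at $r=r_0\cos u$, giving a limit $\bar f_2(u)$), then in the $u$-integral (the factor $t^{-1/2}e^{-r_0^2\sin^2 u/(2t)}$ concentrates at $u=0$); a separate uniform-convergence argument (their Step~4) is then needed to justify combining these two limits. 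Your observation that the combined $(r,u)$-kernel is exactly the two-dimensional heat kernel $\tfrac{1}{2\pi t}e^{-|y-x_0|^2/(2t)}$ in polar coordinates lets you replace all of Steps~2--4 by a single application of the standard heat-kernel-to-identity lemma for the bounded function $g(y)=F(|y|,\arg y)$, which is continuous at $x_0$ (the possible discontinuity of $g$ along the ray $\{\arg y=\theta_0+\pi\}$ is irrelevant, since the Gaussian mass there vanishes as $t\to 0$). This is a real simplification.

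One small point: your density argument for the Fourier step is slightly circular as written. You invoke the bound $|A_1^{(g)}|\le\|g\|_\infty$, but that bound is only evident \emph{after} the Gaussian representation is in hand, which is exactly what the Fourier step is supposed to establish. What you actually need is that $f\mapsto A_1^{(f)}$ is continuous in the uniform norm directly from the definition of $A_1$; this does not follow from boundedness of the Gaussian side alone. The gap is easily filled: either control the Fourier tail in $L^2(d\theta)$ via Cauchy--Schwarz as the paper does in its Step~5, or simply run the whole argument for $f\in C^1$ (for which the cosine series converges uniformly) and recover the continuous case at the very end by uniform approximation, noting that both sides of the identity $A_1^{(f)}+A_2^{(f)}=\int \hat f\cdot(\text{density})$ are then seen to be continuous in $\|f\|_\infty$. (Incidentally, the paper's own Step~5 quietly uses $\partial_\theta\hat f$ and thus has the same regularity issue.) Also, ``$X\to x_0$ almost surely'' should read ``in probability'' or ``in distribution''.
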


\begin{proof}

 First, let us assume that the integration order can be exchanged. This will be further discussed in Step 5.

\medskip

\textbf{Step 1 :} We exchange the order of integrals in  $A_1$ as:
\begin{align*}
A_1 & =  \int_{0}^\pi f_1(u,t) f_2(u,t)du,\\
f_1(u,t) & := \frac{2}{\sqrt{t}\alpha \pi} e^{-\frac{r_0^2\sin^2(u)}{2t}},\\
f_2(u,t):=&\int_{0}^\infty  \frac{re^{-\frac{(r-r_0\cos(u))^2}{2t}}}{\sqrt{t}} \sum_{n=0}^{\infty} d_n \cos\left(\frac{n\pi\theta_0}{\alpha}\right) \cos\left(\frac{n\pi}{\alpha}u\right) \int_{0}^\alpha \hat{f}(r,\theta) \cos\left(\frac{n\pi\theta}{\alpha}\right) d\theta dr.
\end{align*}

We will study the limit of $ A_1 $ in the above integral order. First, we treat the sum inside $ f_2(u,t) $ using trigonometric identities for $ \cos\left( \frac{n\pi(\theta_0 \pm  u)}{\alpha} \right)  $. We see that is enough to find the limit for:
\begin{align*}
& \frac{1}{2} \sum_{n=0}^\infty d_n \left( \cos\left( \frac{n\pi(\theta_0 + u)}{\alpha} \right) + \cos\left( \frac{n\pi(\theta_0 - u)}{\alpha} \right) \right) \int_{0}^\alpha \hat{f}(r, \theta) \cos\left(\frac{n\pi\theta}{\alpha}\right) d\theta.
\end{align*}

In our case, we will use the following normalization of the classical Fourier inversion formula:
\begin{theorem}[Fourier Formula]
\label{fourier}
Let $g : \mathbb{R} \rightarrow \mathbb{R}$ be a periodic and continuous function of period $2L$. Then the following series converges for all $x\in \mathbb{R}$ and:
$$ g(x) = \frac{a_0}{2} + \sum_{n=1}^\infty a_n \cos\left(\frac{n\pi x}{L}\right) + b_n \sin\left(\frac{n\pi x}{L}\right),$$
$$ \text{with } \ a_n = \frac{1}{L}\int_{-L}^L g(\theta)\cos\left(\frac{n\pi \theta}{L}\right)d\theta \ \text{ and } \ b_n = \frac{1}{L} \int_{-L}^L g(\theta) \sin \left( \frac{n\pi \theta}{L}\right) d\theta.$$
\end{theorem}

\smallskip

Next, we extend the definition of the function $\hat{f}$. For all $r \ge 0$ and $\theta \in [0,\alpha]$, we define $\hat{f}(r,-\theta) := \hat{f}(r,\theta)$, and we make it $2\alpha$-periodic by defining $\hat{f}(r,\theta + 2k\alpha) = \hat{f}(r,\theta)$. This way, $\hat{f}$ is an even and $2\alpha$-periodic function and is still continuous. With this definition, and using the Fourier inversion formula, we get for $ \theta_0\pm u\in (-\alpha,\alpha) $:
 \begin{align}
 \label{fourier_f}
  & \frac{1}{2} \sum_{n=0}^\infty d_n \cos\left( \frac{n\pi(\theta_0 \pm u)}{\alpha} \right) \int_{0}^\alpha \hat{f}(r, \theta) \cos\left(\frac{n\pi\theta}{\alpha}\right) d\theta \nonumber \\
  & = \frac{1}{4} \sum_{n=0}^\infty d_n \cos\left( \frac{n\pi(\theta_0 \pm u)}{\alpha} \right) \int_{-\alpha}^\alpha \hat{f}(r, \theta) \cos\left(\frac{n\pi\theta}{\alpha}\right) d\theta = \frac{\alpha}{4} \hat{f}(r,\theta_0 \pm u).
 \end{align}

\medskip

\textbf{Step 2 :} Next, let us study $ f_2(u,t) $ in two separate cases the integral in $r$: $$\int_{0}^\infty \hat{f}(r,\theta_0 \pm u) \frac{re^{-\frac{(r-r_0\cos(u))^2}{2t}}}{\sqrt{t}} dr .$$

{\it Case 1: }If $r_0\cos(u) > 0$:
$$ \frac{re^{-\frac{(r-r_0\cos(u))^2}{2t}}}{\sqrt{t}} = \frac{1}{\sqrt{t}} (r-r_0\cos(u))e^{-\frac{(r-r_0\cos(u))^2}{2t}} + r_0 \cos(u) \frac{e^{-\frac{(r-r_0\cos(u))^2}{2t}}}{\sqrt{t}}. $$
The total mass of the first term is
$$
\int _0^\infty \frac{1}{\sqrt{t}} (r-r_0\cos(u))e^{-\frac{(r-r_0\cos(u))^2}{2t}}dr=\left[-t\frac{e^{-\frac{(r-r_0\cos(u))^2}{2t}}}{\sqrt{t}}\right]_{r=0}^\infty = \sqrt{t} e^{-\frac{r_0^2\cos^2(u)}{2t}} \underset{t \rightarrow 0}{\longrightarrow} 0 .$$
And the second term is, up to the multiplicative constant $\sqrt{2\pi}$, an approximation of the unity around $r = r_0 \cos(u)>0$, so that in this case, the integral in $r$ converges to
$$\sqrt{2\pi} r_0 \cos(u) \hat{f}(r_0\cos(u),\theta_0 \pm u).$$
{\it Case 2:} If $r_0\cos(u) \le 0$: The total mass of $\frac{1}{\sqrt{t}} re^{-(r-r_0\cos(u))^2/(2t)} $ is bounded above by
$$ \frac{1}{\sqrt{t}} \int_0^\infty re^{-\frac{r^2}{2t}}dr = \sqrt{t} \underset{t \rightarrow 0}{\longrightarrow} 0,$$
so that the integral in $r$ converges to $0$. We remark here that convergences in the above two cases are uniform with respect to $ u $ within their respective domains.

\medskip

\textbf{Step 3 :} From the previous step, we consider now the integral with respect to $ u $. Taking into consideration the previous step, we can restrict to the case $ \cos(u)>0 $ or equivalently $ u\in [0,\pi/2] $. That is, consider 
$$ I_{\pm} := \frac{1}{\sqrt{t}} \int_{0}^{\pi/2} \hat{f}(r_0 \cos(u), \theta_0 \pm u) e^{-\frac{r_0^2\sin^2(u)}{2t}} \cos(u) du .$$
Note that for all $\varepsilon > 0$, by dominated convergence,
$$ \frac{1}{\sqrt{t}} \int_{\varepsilon}^{\pi/2} \hat{f}(r_0 \cos(u), \theta_0 \pm u) e^{-\frac{r_0^2\sin^2(u)}{2t}} \cos(u) du \underset{t \rightarrow 0}{\longrightarrow} 0.$$
Fix $\delta >0$ and take $\varepsilon$ small enough such that $\cos(u) \simeq 1$ and $\sin(u) \simeq u$ for all $u \in [0,\varepsilon]$. Then
$$ \frac{1}{\sqrt{t}} \int_0^\varepsilon e^{-\frac{r_0^2 \sin^2(u)}{2t}} \cos(u) du \underset{t \rightarrow 0}{\simeq} \frac{1}{\sqrt{t}} \int_0^\varepsilon e^{-\frac{r_0^2 u^2}{2t}} du \underset{t \rightarrow 0}{\longrightarrow} \sqrt{\frac{\pi}{2r_0^2}} .$$
Thus, up to the multiplicative constant $\sqrt{\pi/(2r_0^2)}$, $u \mapsto e^{-r_0^2\sin^2(u)/(2t)} \cos(u)$ is an approximation of the unity around $u=0$, so that
$$ I_{+}+I_- \underset{t \rightarrow 0}{\longrightarrow}  \sqrt{\frac{2\pi}{r_0^2}} \hat{f}(r_0,\theta_0).$$

\medskip

\textbf{Step 4 :} Now, we put all previous steps together. In Step 1, we proved that for all $t>0$:
\begin{align*}
f_2(u,t) & := \int_{0}^\infty \frac{re^{-\frac{(r-r_0\cos(u))^2}{2t}}}{\sqrt{t}} \frac{\alpha}{4} (\hat{f}(r,\theta_0 + u)+\hat{f}(r,\theta_0 - u)) dr .
\end{align*}
We have proved in Step 2 that for all $u$, $f_2(u,t)$ converges when $t \rightarrow 0$ to 
$$\overline{f}_2(u) := \frac{\alpha}{4} \sqrt{2{\pi}} r_0 \cos(u) \left(\hat{f}(r_0 \cos(u), \theta_0 + u)+\hat{f}(r_0 \cos(u), \theta_0 - u)\right),$$
and in Step 3 that $\int_0^\pi f_1(u,t) \overline{f}_2(u) du$ converges when $t \rightarrow 0$ to 
\begin{align*}
 &\underset{t \rightarrow 0}{\lim} \ \frac{2}{\alpha\pi \sqrt{t}} \int_0^{\pi/2} \frac{\alpha}{4} \sqrt{2{\pi}} r_0 
(\hat{f}(r_0 \cos(u), \theta_0 + u)+\hat{f}(r_0 \cos(u), \theta_0 - u))
e^{-\frac{r_0^2 \sin^2(u)}{2t}}\cos(u) du\\
 =& \frac{2}{\alpha\pi} \frac{\alpha}{4} \sqrt{2{\pi}} r_0  \sqrt{\frac{\pi}{2r_0^2}} 2\hat{f}(r_0,\theta_0) = \hat{f}(r_0,\theta_0).
\end{align*}
To end the proof of the convergence, we have to show that
\begin{align*}
\lim_{t \rightarrow 0}\int_0^\pi f_1(u,t) f_2(u,t) du=\lim_{t \rightarrow 0} \ \int_0^\pi f_1(u,t) \overline{f}_2(u) du.
\end{align*}
We have:
$$ \left| \int_0^\pi f_1(u,t)f_2(u,t)du - f_1(u,t)\overline{f}_2(u)du \right| \le \left(\sup_{t} \int_0^\pi f_1(u,t)du \right) ||f_2(\cdot,t) - \overline{f}_2(\cdot)||_\infty, $$
and $||f_2(\cdot,t) - \overline{f}_2(\cdot)||_\infty \rightarrow 0$ since in Step 2, the convergence is uniform with respect to $u$.

\medskip

\textbf{Step 5 :} We now prove that for all $r_0, \ \theta_0, \ t$ fixed, the integration order can be switched. Note that for fixed $n$, by Fubini's theorem, the integration order in $r$, $\theta$ and $u$ can be switched, as
\begin{align*}
& \int_{0}^\infty \int_{0}^\alpha \left| \hat{f}(r,\theta) e^{-\frac{r^2+r_0^2}{2t}} \cos\left( \frac{n\pi\theta}{\alpha}\right) \cos\left( \frac{n\pi\theta_0}{\alpha}\right) \right| \int_{0}^\pi \left| e^{\frac{rr_0}{t}\cos(u)} \cos\left(\frac{n\pi u}{\alpha}\right) \right| du d\theta dr \\
& \le ||f||_\infty \alpha \pi \int_{0}^\infty e^{-\frac{r^2+r_0^2}{2t}} e^{\frac{rr_0}{t}} dr < \infty.
\end{align*}
So that the order of integration can be exchanged for every partial sum. Now, for $N \in \mathbb{N}$:
\begin{align*}
& \left| \int_{0}^\pi e^{-\frac{r_0^2\sin^2(u)}{2t}} \int_{0}^\infty re^{-\frac{(r-r_0\cos(u))^2}{2t}} \sum_{n=0}^N d_n \cos\left(\frac{n\pi\theta_0}{\alpha}\right) \cos\left(\frac{n\pi}{\alpha}u\right) \cdot \int_{0}^\alpha \hat{f}(r,\theta) \cos\left(\frac{n\pi\theta}{\alpha}\right) d\theta dr du \right. \\
& \left. - \int_{0}^\pi e^{-\frac{r_0^2\sin^2(u)}{2t}} \int_{0}^\infty re^{-\frac{(r-r_0\cos(u))^2}{2t}} \sum_{n=0}^{\infty} d_n \cos\left(\frac{n\pi\theta_0}{\alpha}\right) \cos\left(\frac{n\pi}{\alpha}u\right) \cdot \int_{0}^\alpha \hat{f}(r,\theta) \cos\left(\frac{n\pi\theta}{\alpha}\right) d\theta dr du \right| \\
& \le \frac{\alpha}{4} \int_0^\infty re^{-\frac{r^2+r_0^2}{2t}} \left( \int_{-\pi}^\pi e^{\frac{rr_0}{t} \cos(u)} |\hat{f}_N(r,\theta_0 + u) - \hat{f}(r,\theta_0 + u)|du \right) dr \\
& \text{where } \hat{f}_N \text{ denotes the } N^{th} \text{ partial Fourier sum of } \hat{f}. \text{ Then, by Cauchy-Schwarz inequality:} \\
& \le \frac{\alpha}{4} \int_0^\infty re^{-\frac{r^2+r_0^2}{2t}} \sqrt{\int_{-\pi}^\pi \left(\hat{f}_N(r,\theta_0+ u) - \hat{f}(r,\theta_0 + u)\right)^2 du} \sqrt{\int_{-\pi}^\pi e^{\frac{2rr_0}{t}\cos(u)}du} dr \\
& \le \frac{\alpha}{4} \sqrt{\left\lceil \frac{\pi}{\alpha} \right\rceil} \sup_r ||\hat{f}_N(r,\cdot) - \hat{f}(r,\cdot)||_2 \underbrace{\int_{0}^\infty re^{-\frac{r^2+r_0^2}{2t}} \sqrt{2\pi} e^{\frac{rr_0}{t}} dr}_{ < + \infty}.
\end{align*}
Using Parseval's equality, we obtain
\begin{align*}
||\hat{f}_N(r,\cdot) - \hat{f}(r,\cdot)||_2^2 & = \sum_{n = N+1}^\infty \int_{-\alpha}^\alpha \cos\left(\frac{n\pi \xi}{\alpha}\right)^2 d\xi \frac{1}{\alpha^2} \left(\int_{-\alpha}^\alpha \hat{f}(r,\theta) \cos\left( \frac{n\pi \theta}{\alpha} \right) d\theta \right)^2 \\
& = \sum_{n = N+1}^\infty \int_{-\alpha}^\alpha \cos\left(\frac{n\pi \xi}{\alpha}\right)^2 d\xi \cdot \frac{1}{n^2\pi^2} \left( \int_{-\alpha}^\alpha \partial_\theta\hat{f}(r,\theta)\sin\left(\frac{n\pi\theta}{\alpha}\right) d\theta \right)^2 \\
& \le \sum_{n = N+1}^\infty  \frac{4\alpha^3}{n^2 \pi^2}  ||\partial_\theta \hat{f}||_\infty.
\end{align*}
Although the extension of $\hat{f}$ on $[-\alpha,\alpha]$ is not an element of $\mathcal{C}^1$, we can perform the integration by parts on each interval $[0,\alpha]$ and $[-\alpha,0]$. Then, $||\hat{f}_N(r,\cdot) - \hat{f}(r,\cdot)||_2\to 0$ as $N \rightarrow \infty$ uniformly in $r$. That way we obtain the convergence to $0$ of the difference between the partial sum and the series.
\end{proof}

\end{document}